\documentclass[11pt]{article}
\usepackage[utf8]{inputenc}
\usepackage{graphicx} 
\usepackage{amsthm}   
\usepackage{mathtools}
\usepackage{svg}
\usepackage{amsmath}  
\usepackage{amsfonts} 
\usepackage{setspace} 
\usepackage{cancel}   
\usepackage{ amssymb }
\usepackage{enumitem} 
\usepackage{float}    
\usepackage{mathrsfs} 
\usepackage{hyperref} 
\usepackage{cleveref} 
\usepackage{stackrel} 
\usepackage{empheq} 
\usepackage{xcolor} 
\usepackage{caption} 
\usepackage{cite}
\hypersetup{
    colorlinks,
    linkcolor={blue!50!black},
    citecolor={blue!50!black},
    urlcolor={blue!80!black}
}

\newtheorem{thm}{Theorem}[section]

\newtheorem{lem}[thm]{Lemma}

\theoremstyle{definition}

\newtheorem{remark}[thm]{Remark}

\title{Weakly-singular formulation of the Fractional Laplacian operator}
\author{Oscar P. Bruno$^*$ and Sabhrant Sachan\footnote{Computing and Mathematical Sciences, Caltech, Pasadena, CA 91125, USA}}

\begin{document}
\date{}
\maketitle
\begin{abstract}
  This paper presents a new formulation of the fractional Laplacian
  operator $(-\Delta)^s$ in $n$-dimensional space ($n \ge 1$). The
  proposed formulation expresses $(-\Delta)^s$ as a composition of the
  classical Laplace differential operator and a weakly singular
  integral operator---which can be used to reduce e.g. the Dirichlet
  problem for the fractional Laplacian to a weakly singular integral
  equation involving both volumetric and boundary integral
  operators. This reformulation is well suited for efficient and
  accurate numerical implementation. Although a full description of
  the associated high-order algorithm is deferred to a subsequent
  contribution, several numerical examples are included in this paper
  to demonstrate the high accuracy and computational efficiency
  achieved by the proposed approach.
\end{abstract}

\section{Introduction\label{intro}}

We are concerned with the Dirichlet problem
\begin{equation}\label{frac_lapl}
  \left\{\begin{aligned}
    (-\Delta)^{s} u=f & \quad \text{in } \Omega\\
    u=0 & \quad \text{in } \Omega^{c},
  \end{aligned}\right. \qquad 0< s < 1,
\end{equation}
for the fractional Laplacian operator $(-\Delta)^{s}$
over a bounded domain $\Omega\subset\mathbb{R}^n$, where~\cite{bucur2016}
\begin{equation} \label{eq:frac_op}
        (-\Delta)^s u(x) = c_{n,s} \text{ P.V.} \int_{\mathbb{R}^n} \frac{u(x)-u(y)}{|x-y|^{n+2s}}dy, \quad c_{n,s}=\frac{2^{2 s} s \Gamma(s+n / 2)}{\pi^{n / 2} \Gamma(1-s)},
\end{equation}
and where P.V. denotes the Cauchy principal value. The fractional
Laplacian operator (FL) and associated nonlocal partial differential
equations play central roles in numerous areas of science and
engineering~\cite{sun2018,dipierro2021,giusti2020,giusti2020_2,burch2011,antil2022,antil2017,chen2004,laskin2000,pozrikidis2018fractional,metzler2000random,silling2000reformulation,bobaru2010peridynamic,duo2017comparative,epps1803turbulence,burresi2012weak},
including the fractional Schr\"odinger
equation~\cite{covi2020,bogdan2019,ghosh2020,covi2022,covi2021}, the
fractional heat equation~\cite{bonforte2017}, and fractional
reaction-diffusion systems~\cite{owolabi2021}---which are used to
describe phenomena in quantum mechanics and general relativity,
anomalous diffusion, and chemical or biological processes,
respectively. These diverse contexts highlight the importance of the
FL operator $(-\Delta)^s$, which, as the infinitesimal generator of
isotropic $2s$-stable L\'evy processes~\cite{bertion1996}, provides a
natural fractional analogue of the Laplace operator.  Despite its
widespread relevance, solving equations involving the FL operator,
such as~\eqref{frac_lapl}, remains a challenging task.

The expression~\eqref{eq:frac_op} for the Fractional Laplacian
involves integration over an unbounded domain with a non-integrable
kernel, and it relies on the cancellation of $u(x)-u(y)$ as $x \to y$
for convergence---which, in a computational context,  is
prone to cancellation errors. Additionally the corresponding solutions
of~\eqref{frac_lapl} exhibit singularities at the domain boundary.
Consequently, previous numerical
discretizations~\cite{minden2020,burkardt2021,xu2020,duo2018,dwivedi2024}---including
those relying on singularity-subtraction techniques to treat the
kernel singularities as well as those based on mesh-free approaches
that evaluate the FL operator analytically for suitably chosen basis
functions, among others---exhibit slow convergence, large condition
numbers, severe cancellation errors, and, ultimately, stagnation of
convergence.  In particular, for generic smooth source functions $f$
encountered in practice, existing numerical algorithms for the
problem~\eqref{frac_lapl}---including methods based on adaptive finite
elements (FEM), radial basis functions, isogeometric analysis, and
Monte Carlo-based approaches---have generally exhibited low
convergence rates, stagnated convergence and limited accuracies for
generic smooth source functions $f$ in dimensions
$n\geq
2$~\cite{lischkle2020,xu2020,burkardt2021,minden2020,acosta2017short},
with errors on the order of $10^{-2}$--$10^{-3}$.  

This paper introduces new reformulations of the $n$D fractional
Laplacian operator~\eqref{eq:frac_op} and the associated boundary
value problem~\eqref{frac_lapl}. The new approach, which departs
significantly from previous methods, is based on a new expression
(eq.~\eqref{eq:frac-from-clas} below) for the FL
operator~\eqref{eq:frac_op} as a composition of the classical
Laplacian differential operator with a weakly singular integral
operator, together with a new {\em smooth unknown} $\phi$ defined over
the bounded domain $\Omega$, which is related to the unknown $u$
in~\eqref{frac_lapl} via the relation~\eqref{eq:FL-sol-ws} below. This
transformation thus reduces the original problem to an
integro-differential equation for an auxiliary unknown that remains
smooth up to the boundary. By inverting the Laplace operator, further,
a Fredholm-type integral equation involving only smooth unknowns and
weakly singular kernels is obtained---which includes single- and
double-layer boundary potentials as well as weakly singular integral
operators over $\Omega$.  This reformulation provides several
important advantages: (i)~It only involves weakly singular kernels;
(ii)~It utilizes unknowns that are smooth up to and including the
boundary $\partial \Omega$; and, (iii)~It only involves integrations
over bounded sets, as it only employs integral operators acting on
functions defined over $\Omega$ and $\partial\Omega$.  In particular
the new framework enables the development of numerical algorithms
that, as illustrated by a few numerical examples in
Section~\ref{sec:num-ex}, achieve rapid convergence and high accuracy
at modest computational cost---reaching up to thirteen digits of
accuracy with only a few thousand unknowns in $2$D domains.  (The
details of the numerical algorithm used, which additionally
incorporates a modified version of the rectangular-polar
method~\cite{bruno2020chebyshev, bruno2024direct}, lie beyond the scope of this paper
and will be reported elsewhere.)

As suggested above, the FL has emerged as a unifying element for
describing nonlocal phenomena across physics, mechanics, and
probability. In astrophysics, it provides a natural extension of
Newtonian gravity and MOND, allowing thin-disk galaxies and galactic
dynamics to be modeled through fractional Poisson equations that
account for long-range interactions beyond classical Laplacian
frameworks \cite{giusti2020,giusti2020_2}. At smaller scales, the same
operator underlies anomalous diffusion on bounded domains, where
nonlocal boundary conditions link analytic formulations to physical
transport processes \cite{burch2011}. In image processing, it has been
employed for image denoising, where the resulting PDE suppresses noise
while preserving essential image
features~\cite{antil2022,antil2017}. Its ability to capture
dissipation and dispersion mechanisms inaccessible to conventional
PDEs has enabled accurate models of lossy acoustic and electromagnetic
media, where frequency power-law attenuation arises naturally from
fractional spatial operators \cite{chen2004}. Fractional quantum
mechanics further illustrates the operator’s reach: by replacing
Brownian-motion-based path integrals with L\'evy flights, a fractional
Schr\"odinger equation is obtained that yields new dispersion
relations and particle models with applications ranging from condensed
matter to Quantum Chromodynamics \cite{laskin2000}. At a probabilistic
level, the operator is the infinitesimal generator of L\'evy
processes, providing the stochastic foundation for anomalous diffusion
and linking the analytic, physical, and probabilistic perspectives
\cite{pozrikidis2018fractional,metzler2000random}. Parallel
developments in continuum mechanics---particularly
peridynamics---offer a complementary integral framework for
nonlocality, conceptually aligned with fractional models; recent work
shows that, under suitable scalings, peridynamic operators converge to
the fractional Laplacian, establishing a mathematical bridge between
mechanics and probability
\cite{silling2000reformulation,bobaru2010peridynamic,duo2017comparative}. In
turbulence, fractional operators encode anomalous energy transfer and
dissipation across scales \cite{epps1803turbulence}; in optics, they
describe weak localization of light in L\'evy glasses and related
superdiffusive media \cite{burresi2012weak}.  

Multiple equivalent formulations of the operator~\eqref{eq:frac_op}
exist, as emphasized in comparative
reviews~\cite{mateusz2017,lischkle2020} (see
also~\cite{bhattacharyya2021}), each offering distinct advantages
depending on context. Generalizing the Fourier multiplier property of
the Laplacian to fractional orders, the classical definition for
functions on $\mathbb{R}^n$ is given in terms of the Fourier
transform,
\begin{equation}\label{eq:fourier_def}
(-\Delta)^s u(x) = \mathcal{F}^{-1}\left[|\xi|^{2s} \mathcal{F}[u](\xi)\right](x),
\end{equation}
where $\mathcal{F}$ and $\mathcal{F}^{-1}$ denote the Fourier and
inverse Fourier transforms, respectively; as shown in
\cite{mateusz2017}, the Fourier-based definition is equivalent to the
integral (Riesz) formulation expressed via the hypersingular
operator~\eqref{eq:frac_op}. On bounded domains, an alternative
definition, not equivalent to the Riesz version~\cite{Valdinoci2014},
is based on the set of eigenfunctions and eigenvalues of the classical
Laplace operator~\cite[Sec.~2.5.1]{lischkle2020}. As indicated above,
however, the Riesz formulation provides a natural probabilistic
interpretation in terms of random walks with jumps and associated
L\'evy processes.  A certain ``directional'' representation, in turn,
relates the FL~\eqref{eq:frac_op} to an average of one-dimensional
Riemann–Liouville fractional derivatives over all angular directions,
highlighting its connection to the Riesz derivative (see
e.g.~\cite{cai2019, li2020}). The aforementioned
reference~\cite{mateusz2017} provides ten equivalent definitions of
the FL~\eqref{eq:frac_op}---including distributional, semigroup, and
probabilistic interpretations; see also~\cite{duo2019study}. All of
the formulations just discussed present challenges from a
computational perspective, however. For example, in the context of
problem~\eqref{frac_lapl}, numerical approximations of the Fourier
transform of $u$ converge slowly due to the limited regularity of $u$
at the boundary of $\Omega$. This difficulty is further amplified by
the multiplication with $|\xi|^{2s}$ in the inverse
transform. Consequently, Fourier-based approaches, at least in their
raw form, are generally not computationally attractive.  In contrast,
as illustrated briefly in Section~\ref{sec:num-ex}, the formulation
introduced in this paper enables the development of numerical methods
with significantly improved convergence rates.  This enhancement is,
in fact, one of the primary motivations for the contributions
presented in this study.

A key aspect of the constructions presented in this paper concerns the
boundary regularity of the Poisson potential $V[f]$ for the Laplace
equation (eq.~\eqref{single_layer_vol}), as well as the corresponding
boundary regularity of a certain potential $F_s[\phi]$
(eq.~\eqref{eq:F_s-op}) which, for $0<s<1$, is related to but
different from the Riesz
potential~\cite{lu2012overdetermined,rozenblum2016isoperimetric}, and
which, for $s=0$, coincides with the potential $V[f]$
(Remark~\ref{rem:Gs}). Together with the
reformulation~\eqref{eq:frac-from-clas} in
Theorem~\ref{thm:FL_into_Lap}, the boundary-regularity results
presented in Lemmas~\ref{lem:boundary_reg_Fs}
and~\ref{lem:boundary_reg_V} for the operators $V[f]$ and $F_s[\phi]$
underly the theoretical foundations of the proposed reformulation for
the boundary value problem~\eqref{eq:frac_op}. As noted in
Remark~\ref{rem:Vf_reg}, to the authors’ knowledge, no previous
results have addressed the boundary regularity of either
operator---which may somewhat surprizing in the case of the well-known
Poisson potential.


This paper is organized as follows. Section~\ref{prelim} introduces
the necessary preliminaries, including existing regularity results as
well as volumetric and boundary single- and double-layer potentials
related to the Laplace and Poisson equations.  The proposed
reformulations of the fractional Laplacian operator and the associated
Dirichlet problem~\eqref{frac_lapl} and associated regularity results
are presented in Section~\ref{sec:Weakly_sing_form}
(Theorems~\ref{thm:FL_into_Lap} through~\ref{thm:new_form_1D} and
Lemmas~\ref{lem:boundary_reg_Fs} and~\ref{lem:boundary_reg_V}).
Illustrative numerical results, finally, are presented in
Section~\ref{sec:num-ex}.

\section{Preliminaries\label{prelim}}


This section introduces necessary conventions and notations, and it
reviews a number of known results that underly the theory presented in
this contribution.  Except for Theorem~\ref{thm:FL_into_Lap} and
Lemmas~\ref{lem:FL_op_grad} and~\ref{lem:und_int}, throughout this
paper it is assumed that $\Omega \subset \mathbb{R}^n$ is a (possibly
multiply connected) open, bounded, and connected domain with smooth
(infinitely differentiable) boundary, as illustrated in
Figure~\ref{Omega}. In particular, the complement of its closure,
$\overline{\Omega}^c := \mathbb{R}^n \setminus \overline{\Omega}$,
consists of a finite number $n_h+1$ of connected components, denoted
by $\Omega'_\ell$, $0 \leq \ell \leq n_h$ ($n_h \geq 0$):
\begin{equation}\label{domain_Omega}
  \overline{\Omega}^c = \mathbb{R}^n \setminus \overline{\Omega} = \bigcup_{\ell=0}^{n_h} \Omega'_\ell;
\end{equation}
the boundaries of the various sets in this equation are denoted by
$\partial\Omega$ and $\partial\Omega'_\ell$. Here, for each
$\ell = 1, \dots, n_h$, the set $\Omega'_\ell$ is a bounded connected
component, while $\Omega'_0$ denotes the unique unbounded connected
component of $\overline{\Omega}^c$.
\begin{figure}[H]
    \centering
    \includegraphics[width=14cm]{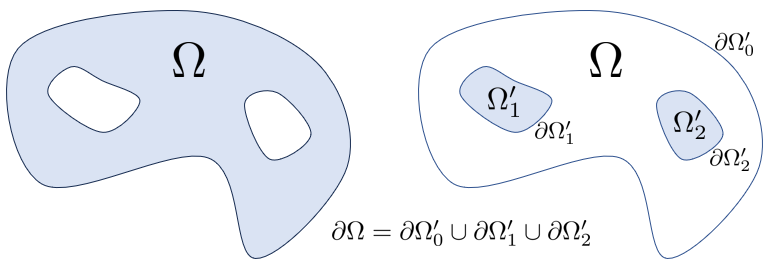}
    \caption{A multiply-connected domain with two holes\label{Omega}}
\end{figure}

Throughout this paper we write $\mathbb{N}_0 = \mathbb{N}\cup\{0\}$, and,
for $c\in\mathbb{N}$ and $\alpha\in \left(\mathbb{N}_0\right)^c$, we let
\begin{equation}\label{eq:mlt-indx-nrm}
  |\alpha| = \sum_{i=1}^{c}\alpha_i.
\end{equation}
For $c\leq n$ and $\alpha \in(\mathbb{N}_0)^c$, and for a function
$h = h(t_1,\dots,t_n)$ which is $|\alpha|$-differentiable with respect
to the first $c$ variables $(t_1,\dots, t_c)$, further, we denote by
\begin{equation}\label{eq:AP2_defn_Dalp}
  D^{\alpha}_t h = \dfrac{\partial^{|\alpha|}h}{\partial t_1^{\alpha_1}\cdots\partial t_c^{\alpha_{c}} }, \qquad \alpha \in \left(\mathbb{N}_0\right)^c.
\end{equation}
When necessary for clarity, variants of the
definition~\eqref{eq:AP2_defn_Dalp}, such as e.g.
$D_{\tau}^{\alpha}h(\tau,t)$ and $D_t^{\alpha}h(\tau,t)$, are also
used to specify derivatives of order
$\alpha = (\alpha_1,\dots,\alpha_c)$ of a function $h(\tau,t)$ with
respect to the variables $t$ and $\tau$, respectively.

For $t>0$, the following notations are
employed:
\begin{equation}
  \label{eq:limit-holder}
  C^{t+0} = \bigcup_{\varepsilon>0} C^{t+\varepsilon } \quad \text{and} \quad C^{t-0} = \bigcap_{0<\varepsilon \le t} C^{t-\varepsilon }.
\end{equation}
Here, for a real number $r\geq 0$, we have set
$C^r(\Omega) = C^{k+\delta}(\Omega)=C^{k,\delta}(\Omega)$ where $k$ denotes
the integer part of $r$ and where $\delta = r - k$. In other words,
$C^r(\Omega)$ denotes the space of $k$-times continuously
differentiable functions whose $k$-th order derivatives are H\"older
continuous of order $\delta$; see
e.g.~\cite{evans2022partial}. Furthermore, throughout this paper,
$d(x) \in C^{\infty}(\overline{\Omega})$ denotes a fixed function that
is strictly positive in $\Omega$ and such that the quotient
$d(x)/\mathrm{dist}(x,\partial\Omega)$ is infinitely differentiable in
a neighborhood of $\partial\Omega$ (up to and including the boundary
$\partial\Omega$), where $\mathrm{dist}(x,\partial\Omega)$ denotes the
distance from $x$ to $\partial\Omega$.
\begin{thm} \label{thm:fu_rel} Let $n\in\mathbb{N}$ and let
  $\Omega\in\mathbb{R}^n$ denote a bounded and open domain whose
  boundary is infinitely differentiable, and let $s\in (0,1)$. Then,
  for each function $f \in C^{t+0}(\overline{\Omega})$, $t \ge 0$,
  equation~\eqref{frac_lapl} admits a unique solution $u \in C(\mathbb{R}^n)$, which may
  additionally be expressed in the form
  \begin{equation}\label{sing_form}
    u(x) = d^s(x) \phi(x)\quad\mbox{for}\quad x \in \Omega,\quad (d^s(x) =(d(x))^s),
  \end{equation}
  for a certain function
  $\phi \in C^{t+s-0}(\overline{\Omega}) \cap
  C^{t+2s-0}(\Omega)$. Further, $f \in C^{\infty}(\overline{\Omega})$
  if and only if $\phi \in C^{\infty}(\overline{\Omega})$.
\end{thm}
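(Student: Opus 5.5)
This result is essentially a collection of known regularity results for the fractional Dirichlet problem, and the plan is to assemble it from the literature rather than prove it from scratch. The ingredients are: existence and uniqueness via variational methods, the Ros-Oton--Serra boundary regularity theory, and Grubb's transmission-property theory for the smooth case. The proof has three steps.

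\emph{Step 1: existence and uniqueness.} Work in the Hilbert space $\widetilde H^s(\Omega)=\{v\in H^s(\mathbb{R}^n): v=0 \text{ in } \Omega^c\}$ with the bilinear form
\[
a(u,v)=\tfrac{c_{n,s}}{2}\iint\frac{(u(x)-u(y))(v(x)-v(y))}{|x-y|^{n+2s}}\,dx\,dy .
\]
This form is coercive on $\widetilde H^s(\Omega)$ by the fractional Poincar\'e inequality. Since $f\in C^{t+0}(\overline\Omega)\subset L^2(\Omega)$, the Lax--Milgram theorem gives a unique weak solution. Standard $L^\infty$ bounds, together with the $C^s(\mathbb{R}^n)$ estimate of Ros-Oton and Serra for bounded right-hand sides on $C^{1,1}$ domains, show that this solution lies in $C(\mathbb{R}^n)$. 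Uniqueness among continuous solutions follows from the maximum principle for $(-\Delta)^s$: a continuous solution with $f=0$ that vanishes in $\Omega^c$ must be identically zero.

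\emph{Step 2: interior regularity and the finite-regularity boundary statement.} Define $\phi=u/d^s$ in $\Omega$. Interior regularity ($u\in C^{t+2s-0}$ locally in $\Omega$) is the standard Schauder-type estimate for $(-\Delta)^s$. Since $d^s$ is smooth and positive in $\Omega$, it follows that $\phi\in C^{t+2s-0}(\Omega)$.

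For the boundary behaviour I would invoke Grubb's theory of $\mu$-transmission pseudodifferential operators. The operator $(-\Delta)^s$ has the $s$-transmission property. For smooth $\Omega$ and $f\in C^{t+0}(\overline\Omega)$, Grubb's H\"older-space results give that the solution lies in the space $d^s\,C^{t+s-0}(\overline\Omega)$ (in her notation, $\mathcal{E}_s$-type spaces). In the low-regularity range one can alternatively cite Ros-Oton--Serra, who prove that $u/d^s$ is $C^{\alpha}$ up to the boundary, and use higher-order boundary Schauder bootstrapping for the higher regularity. This yields $\phi\in C^{t+s-0}(\overline\Omega)$. The loss from $t+2s$ in the interior to $t+s$ at the boundary is intrinsic to the transmission spaces. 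The main obstacle is matching Grubb's Hölder--Zygmund scale and her choice of distance function to the $C^{t+0}$/$C^{t-0}$ conventions and the particular $d$ used here. Since $d/\mathrm{dist}(\cdot,\partial\Omega)$ is smooth and positive near $\partial\Omega$, changing the distance function only multiplies $\phi$ by a smooth positive factor, so this mismatch is harmless.

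\emph{Step 3: the smooth equivalence.} For the forward direction, if $f\in C^\infty(\overline\Omega)$ then Step 2 holds for every $t$, so $\phi\in C^\infty(\overline\Omega)$. For the converse, suppose $\phi\in C^\infty(\overline\Omega)$. Then $u=d^s\phi$, extended by zero, lies in Grubb's space $\mathcal{E}_s(\overline\Omega)$. Her mapping theorem for $s$-transmission operators states that $r^+(-\Delta)^s$ maps $\mathcal{E}_s(\overline\Omega)$ into $C^\infty(\overline\Omega)$, so $f\in C^\infty(\overline\Omega)$. Equivalently, one can check this directly via a local flattening of $\partial\Omega$ and the explicit one-dimensional identity $(-\Delta)^s (x_+)^s=0$ on $x>0$, together with its higher-order analogues for $(x_+)^{s+k}$.
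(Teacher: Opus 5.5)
Your proposal is correct and follows essentially the paper's route. The paper's proof is likewise a pure citation argument: it takes existence and uniqueness of a continuous solution from a single reference (\cite[Thm.~4.6]{liu2025}), and then, noting $u\in L^\infty(\Omega)$, reads off $u=d^s\phi$ with $\phi\in C^{t+s-0}(\overline{\Omega})\cap C^{t+2s-0}(\Omega)$ directly from Grubb's $\mu$-transmission theory (\cite[Thm.~4]{grub2015}), exactly as in your Step~2.

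Your Step~1 (Lax--Milgram, the Ros-Oton--Serra $C^s(\mathbb{R}^n)$ bound, and the maximum principle) is a valid substitute for the first citation. The maximum principle applies once interior $C^{2s+\epsilon}$ regularity makes the weak solution a pointwise one. Your appeal to the mapping property $r^+(-\Delta)^s:\mathcal{E}_s(\overline{\Omega})\to C^\infty(\overline{\Omega})$ also supplies the converse half of the $C^\infty$ equivalence, which the paper's proof does not address explicitly.
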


\begin{proof}
  Since
  $f \in C^{t+0}(\overline{\Omega}) \subseteq C(\overline{\Omega})$,
  in view of~\cite[Thm. 4.6]{liu2025} it follows that there exists a
  unique solution $u \in C(\mathbb{R}^n)$ of
  equation~\eqref{frac_lapl}. Clearly,
  $u \in L^{\infty}(\Omega)$, and, thus, in view of the
  assumption $f \in
  C^{t+0}(\overline{\Omega})$,~\cite[Thm. 4]{grub2015} tells us that
  $u$ can be expressed in the form~\eqref{sing_form} with
  $\phi \in C^{t+s-0}(\overline{\Omega}) \cap
  C^{t+2s-0}(\Omega)$, as desired.
\end{proof}

The reformulation presented in Section~\ref{sec:Weakly_sing_form}
reduces the FL problem~\eqref{frac_lapl} to a sequence of two problems,
namely, a Poisson problem of the form
\begin{equation}\label{eq:poisson}
  \left\{\begin{aligned}
    \Delta v = g_1 & \text { in } \Omega \\
    v |_{\partial \Omega} = g_2 & \text { on } \partial\Omega.
  \end{aligned}\right. 
\end{equation}
for the classical Laplace operator $\Delta$, and an integral-equation
problem for a certain weakly-singular integral operator.  The Poisson
problem~\eqref{eq:poisson} with, say,
$g_1\in C^\delta(\overline{\Omega})$ ($0<\delta\leq 1$) and
$g_2\in C(\partial\Omega)$, may be tackled~\cite[Thms. 2.28 and
3.40]{folland2020introduction} by means of the single- and
double-layer surface potentials and volumetric potentials defined in
terms of the fundamental solution
\begin{equation}
  N(x,y) =
  \begin{cases}
        \frac{1}{\omega_n}\frac{|x-y|^{2-n}}{(2-n)},& n>2,\\
        \frac{1}{2\pi} \log|x-y|,& n = 2
  \end{cases}
\end{equation}
of the Laplace operator, where $\omega_n$ denotes the surface area of
the $n$-dimensional sphere. In detail, the volumetric potential
\begin{equation}\label{single_layer_vol}
  V[f](x) = \int_{\Omega} N(x,y) f(y) dy, \quad x \in \overline{\Omega},
\end{equation}
with, say, $f\in C^\delta(\overline\Omega)$ with $0< \delta\leq 1$,
satisfies the equation~\cite[Thm. 2.28]{folland2020introduction}
\begin{equation}
  \label{eq:poisson_solve}
  \Delta V[f] = f.
\end{equation}
Thus,  defining
\begin{equation}
  \label{eq:v-poten}
  \widetilde v(x) = v(x) -  V[g_1](x)
\end{equation}
the problem~\eqref{eq:poisson} reduces to the Laplace problem
\begin{equation}\label{eq:laplace}
  \left\{\begin{aligned}
  &   \Delta \widetilde v = 0 &&\text{in } \Omega \\
  &  \widetilde v\big|_{\partial \Omega} = h  &&\text {on } \partial\Omega
  \end{aligned}\right. \quad ,
\quad \mbox{with}\quad h = g_2 -  V[g_1]\big|_{\partial \Omega} \in C(\partial \Omega).
\end{equation}

To solve~\eqref{eq:laplace}, in turn, we introduce, for
$\beta,\zeta\in C(\partial \Omega)$, the single- and double-layer
potentials~\cite{folland2020introduction}
\begin{equation}\label{single_layer_dom}
  \mathscr{S}[\beta](x) =  \int_{\partial\Omega} N(x,y) \beta(y) dS_y, \quad x \in \Omega, 
\end{equation}
and
\begin{equation}\label{double_layer_dom}
  \mathscr{D}[\zeta](x) =\int_{\partial\Omega}\partial_{\nu_y}N(x,y) \zeta(y) dS_y, \quad x \in \Omega,
\end{equation}
where
\begin{equation}
  \label{eq:normal}
  \mbox{$\nu_y$ denotes the outer normal vector to $\partial\Omega$
    at the point $y$.}
\end{equation}
For $\beta,\zeta\in C(\partial \Omega)$, the boundary values of
$\mathscr{S}[\beta](x)$ and $\mathscr{D}[\zeta](z) $ as
$z\to x\in \partial\Omega$ from the interior of $\Omega$ are given
by~\cite{folland2020introduction}
\begin{equation}\label{single_layer_bd}
  \lim_{z\to x,\; z\in\Omega}
  \mathscr{S}[\beta](z) = S[\beta](x), \quad x \in \partial\Omega
\end{equation}
and
\begin{equation}\label{double_layer_bd}
  \lim_{z\to x,\; z\in\Omega}
  \mathscr{D}[\zeta](z) = \frac{1}{2}\zeta(z)+ D[\zeta](x), \quad x \in \partial\Omega,
\end{equation}
where
\begin{equation}
S[\beta](x)  =\int_{\partial\Omega} N(x,y) \beta(y) dS_y\quad\text{and}\quad  D[\zeta](x) =\int_{\partial\Omega}\partial_{\nu_y}N(x,y) \zeta(y) dS_y, \quad x \in \partial\Omega
\end{equation}
denote the single- and double-layer operators, respectively. Note that
the kernel of the operator $D$, which is weakly singular for the
smooth domains considered in this paper, is given by
\begin{equation}
  \label{eq:DL-kernel}
  \partial_{\nu_y}N(x,y) = -\frac{1}{w_n}\frac{(x-y)\cdot
    \nu_y}{|x-y|^n}.
\end{equation}

As is known~\cite{folland2020introduction}, the solution of the
Laplace problem~\eqref{eq:laplace} for the (possibly
multiply-connected) smooth domains $\Omega$ under consideration may
indeed be obtained on the basis of the single- and double-layer
potentials just defined. Indeed, under the standing assumption of a
$C^\infty$ boundary $\partial\Omega$,
reference~\cite[Ch. 3]{folland2020introduction} tells us that, letting
$\beta_j$ ($j=1,\dots,n_h$) denote the (unique) solution of the
problem
\begin{equation}\label{eq:betaj_def}
    S[\beta_j](x) = \begin{cases}
        1, \ &\text{if } x \in \partial\Omega_{j}', \\
        0, \ &\text{if } x \in \partial\Omega_{\ell}',\ \ell =0,\dots,n_h , \ \ell\ne j
    \end{cases}
    \qquad \mbox{for}
   \qquad  x\in \partial \Omega,
 \end{equation}
 there exists a function $\zeta \in C(\partial \Omega)$ and a complex
 vector $a\in\mathbb{C}^{n_h}$ such that
\begin{equation}
  \label{eq:LP_to_Inteq}
  \widetilde v(x) = \mathscr{D}[\zeta](x) + \sum_{j=1}^{n_h} a_j \mathscr{S}[\beta_j](x)
\end{equation}
equals the solution of the problem~\eqref{eq:laplace}. In fact, any
couple $(\zeta,a)$ that solves the equation
\begin{equation} \label{eq:inteq_lap} \frac{1}{2}\zeta(x)+D[\zeta](x)
  + \sum_{j=1}^{n_h} a_j S[\beta_j](x)=h(x), \quad x\in
  \partial\Omega
\end{equation}
produces the solution $\widetilde v(x)$ upon substitution
into~\eqref{eq:LP_to_Inteq}. Moreover, for any given function $h$, the constant vector $a = (a_1,\dots,a_{n_h})\in \mathbb{C}^{n_h}$ is uniquely 
determined. This follows from the proof of point (a) in~\cite[Thm.~3.40]{folland2020introduction}, 
together with the result in~\cite[Cor.~3.39]{folland2020introduction}, which 
establishes that any $f \in L^2(\Omega)$ admits a unique representation of the form  
\[
  f = \widetilde{f} + \sum_{j=1}^{n_h} a_j \chi_{\partial \Omega_j}(x),
\]
with $\widetilde{f} \in
\mathrm{Image}\!\left(\tfrac{1}{2}I+D\right)$. Finally,
per~\cite[Prop. 3.37]{folland2020introduction}, the solution
of~\eqref{eq:inteq_lap} is unique if and only if $n_h=0$.


\section{Weakly Singular Fractional Laplacian Formulation \label{sec:Weakly_sing_form}}
As indicated in Section~\ref{intro}, the main results of this paper,
Theorems~\ref{thm:FL_into_Lap} through~\ref{thm:new_form_1D},
establish a connection between the FL boundary-value
problem~\eqref{frac_lapl} and a problem involving the composition of
the classical Laplace operator $\Delta_x$ with a weakly singular
integral operator over the domain~$\Omega$. In particular,
Theorem~\ref{thm:FL_into_Lap} presents the key re-formulation of the
FL operator, while Theorems~\ref{thm:new_form_nD}
and~\ref{thm:new_form_1D} (which concern the FL boundary-value problem
in dimensions $n\geq 2$ and $n = 1$, respectively) build upon this
reformulation to express the FL solutions in terms of a combined
volumetric-and-surface weakly singular integral equation over the
bounded domain~$\Omega$ and its boundary~$\partial\Omega$.

\begin{thm}\label{thm:FL_into_Lap}
  Let $n\in\mathbb{N}$, $n\ge 2$, $s \in (0,1)$, and let
  $u\in C^2(\Omega) \cap C(\mathbb{R}^n)$ which vanishes outside
  $\Omega$ be such that $u_{x_i}$ is integrable in a Lipschitz domain $\Omega$ for each
  $i$, $1 \le i \le n$. Then
  \begin{equation}\label{eq:frac-from-clas}
    (-\Delta)^{s} u(x) = C_{n,s} \Delta_x \int_{\Omega} \frac{u(y)}{|x-y|^{n+2s-2}} dy
  \end{equation}
  for $x \in \mathbb{R}^n \setminus \partial \Omega$, where
  \begin{equation}\label{eq:new_const_cns}
    C_{n,s} =
    -\frac{\Gamma\left(s+\frac{n}{2}-1\right)\Gamma(s)}{4^{1-s}\pi^{\frac{n}{2}+1}}\sin{(\pi
      s)}
  \end{equation}
  (cf. $c_{n,s}$ in eq.~\eqref{eq:frac_op}).

\end{thm}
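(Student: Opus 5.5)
The approach is to recognize the right-hand side of~\eqref{eq:frac-from-clas} as the Laplacian of a Riesz potential, and to check the identity first at the level of Fourier symbols. The function $|x|^{-(n+2s-2)}$ is locally integrable because $n+2s-2<n$ when $s<1$. For $n\ge 2$ and $s>0$ its exponent also exceeds $0$, so it defines a tempered distribution with Fourier transform $\gamma_{n,s}|\xi|^{-(2-2s)}$. The constant is given by the classical Riesz formula
\[
\mathcal{F}\big[|x|^{-(n-\alpha)}\big](\xi)=\pi^{\alpha-n/2}\,\frac{\Gamma(\alpha/2)}{\Gamma((n-\alpha)/2)}\,|\xi|^{-\alpha},\qquad \alpha=2-2s\in(0,2),
\]
in the convention $\mathcal{F}[u](\xi)=\int u(x)e^{-2\pi i x\cdot\xi}\,dx$; the factor $(2\pi)^2$ from $\Delta$ must be tracked if another normalization is used. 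Then $\Delta_x(|\cdot|^{-(n+2s-2)}*u)$ has symbol $-4\pi^2|\xi|^2\gamma_{n,s}|\xi|^{2s-2}$, which is a constant multiple of $|\xi|^{2s}$. By~\eqref{eq:fourier_def} and its equivalence with~\eqref{eq:frac_op}~\cite{mateusz2017}, this yields~\eqref{eq:frac-from-clas} with $C_{n,s}=-1/(4\pi^2\gamma_{n,s})$. Simplifying with the reflection formula $\Gamma(1-s)\Gamma(s)=\pi/\sin(\pi s)$ should produce exactly~\eqref{eq:new_const_cns}; I would verify this carefully, since it is a routine but error-prone computation.

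The Fourier argument only gives the identity distributionally, and $u$ here is merely continuous with compact support and $C^2$ in $\Omega$, so the actual proof should be a direct real-variable computation that reproduces this constant. I would write $w(x)=\int_\Omega u(y)|x-y|^{2-n-2s}dy=\int_{\mathbb{R}^n}u(x-z)|z|^{2-n-2s}dz$. Because $\nabla u\in L^1$ (this is where the hypothesis on $u_{x_i}$ enters) and $u$ is continuous and vanishes outside $\Omega$, the distributional gradient of $u$ on $\mathbb{R}^n$ is the integrable function $\nabla u\,\chi_\Omega$. Hence $\nabla w=\int \nabla u(y)\,|x-y|^{2-n-2s}dy$. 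I would then take one more derivative by moving it onto the kernel:
\[
\Delta w(x)=\lim_{\varepsilon\to0}\int_{|x-y|>\varepsilon}\nabla u(y)\cdot\nabla_x|x-y|^{2-n-2s}\,dy .
\]
This limit exists because $\nabla_x|x-y|^{2-n-2s}=O(|x-y|^{1-n-2s})$ is singular, and for $x\notin\partial\Omega$ the function $u$ is $C^2$ (or identically zero) near $x$. Next I would integrate by parts on $\{|x-y|>\varepsilon\}$ using the identity $\nabla_y\cdot\big[(u(y)-u(x))\nabla_y|x-y|^{2-n-2s}\big]=\nabla u\cdot\nabla_y|x-y|^{2-n-2s}+(u(y)-u(x))\Delta_y|x-y|^{2-n-2s}$. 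Since $|x-y|^{2-n-2s}$ is a radial power, $\Delta_y|x-y|^{2-n-2s}=(2-n-2s)(-2s)|x-y|^{-n-2s}$. The sphere term at radius $\varepsilon$ is $O(\varepsilon^{n-1}\cdot\varepsilon\cdot\varepsilon^{1-n-2s})=O(\varepsilon^{1-2s})$ after using symmetric cancellation of the first-order Taylor term. This gives $\Delta w=2s(n+2s-2)\,\mathrm{P.V.}\!\int (u(x)-u(y))|x-y|^{-n-2s}dy$ up to sign, hence the theorem with $C_{n,s}=c_{n,s}/(2s(n+2s-2))$ and the appropriate sign.

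With $c_{n,s}$ from~\eqref{eq:frac_op}, this gives $C_{n,s}=\dfrac{4^{s}\Gamma(s+n/2)}{2\pi^{n/2}(n+2s-2)\Gamma(1-s)}=\dfrac{4^{s}\Gamma(s+n/2-1)}{4\pi^{n/2}\Gamma(1-s)}$. Applying the reflection formula turns this into the modulus of~\eqref{eq:new_const_cns}, and the sign follows from the orientation of $\Delta$ versus $-\Delta$.

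I expect the main obstacle to be justifying the integration by parts and limit exchanges with minimal regularity. At infinity, $u(x)-u(y)=u(x)$ is constant for large $|y|$, so the boundary terms on large spheres must be shown to decay; the integrand $\nabla u\cdot\nabla|x-y|^{2-n-2s}$ is compactly supported, so I would work on $B_R$ and check that the outer sphere terms cancel consistently. For $x\notin\overline\Omega$ the result $u(x)=0$ needs separate handling, and there the computation is direct differentiation under the integral. Near $\partial\Omega$, $u$ is only continuous, so the divergence theorem on $\Omega\setminus B_\varepsilon(x)$ must use the hypotheses that $u$ vanishes on $\partial\Omega$ and that $\nabla u\in L^1$. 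This is where the Lipschitz boundary enters, and I would approximate by interior subdomains to handle it.
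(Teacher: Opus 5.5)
Your real-variable argument uses the same ingredients as the paper's proof. These are the identity $\Delta_y|x-y|^{2-n-2s}=2s(n+2s-2)|x-y|^{-n-2s}$, integration by parts off $B_\varepsilon(x)$ with symmetric Taylor cancellation on the small sphere, and differentiation under a principal value. You run them in the opposite direction: the paper goes from the hypersingular integral to the divergence form of Lemma~\ref{lem:FL_op_grad} and then shows $\int_\Omega K\nabla u=\nabla\int_\Omega Ku$, whereas you start from $w$ and differentiate twice.

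Your bookkeeping is leaner in three ways.
\begin{itemize}
\item The zero extension of $u$ has distributional gradient $\chi_\Omega\nabla u$. You can prove this by truncating to $\mathrm{sign}(u)(|u|-\delta)_+$, which is compactly supported in $\Omega$, and letting $\delta\to0$ using $\nabla u\in L^1$. This removes all boundary integrals over $\partial\Omega$, and with them the need for the Lipschitz property.
\item Integrating $(u(y)-u(x))\nabla_y|x-y|^{2-n-2s}$ over $\{|x-y|>\varepsilon\}$ treats $x-\Omega$ and its complement together. The sphere at infinity contributes $O(R^{-2s})$. The paper instead relies on cancellation of the $x-\partial\Omega$ terms.
\item For $x\notin\overline\Omega$ the identity is immediate by differentiating under the integral sign, where the paper integrates by parts twice.
\end{itemize}
Your constant $-c_{n,s}/(2s(n+2s-2))$ is the paper's. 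The sign is determined, not ambiguous: $\Delta w=-2s(n+2s-2)\,\mathrm{P.V.}\!\int(u(x)-u(y))|x-y|^{-n-2s}\,dy$.

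Three points need repair.

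\textbf{Small-sphere bound.} The bound you wrote, $O(\varepsilon^{1-2s})$, is the estimate \emph{without} cancellation, and it does not tend to zero for $s\ge\frac12$. Once the first-order term integrates to zero by symmetry, the second-order Taylor remainder gives $O(\varepsilon^{2-2s})$. This is where $u\in C^2$ near $x$ is needed, exactly as in the paper's treatment of~\eqref{eq:inner_bd}.

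\textbf{Interchange of derivative and principal value.} The existence of $\lim_{\varepsilon\to0}\int_{|x-y|>\varepsilon}\nabla u(y)\cdot\nabla_x|x-y|^{2-n-2s}\,dy$ does not by itself show that this limit equals $\Delta w(x)$. Moving $\partial_{x_i}$ inside the principal value is precisely the content of Lemma~\ref{lem:und_int}. You can supply it as follows. Split $\partial_iu=\psi\partial_iu+(1-\psi)\partial_iu$ with a cutoff $\psi$ supported near $x$. Use $\partial_{x_i}(K*(\psi\partial_iu))=K*\partial_i(\psi\partial_iu)$, then integrate by parts off $B_\varepsilon(x)$. The sphere term is $O(\varepsilon^{2-2s})$ after the zeroth-order term cancels.

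\textbf{Fourier heuristic.} The constant is off as written. In the $e^{-2\pi i x\cdot\xi}$ convention the Riesz constant carries $\pi^{n/2-\alpha}$, not $\pi^{\alpha-n/2}$. Also, $(-\Delta)^s$ has symbol $(2\pi|\xi|)^{2s}$ rather than $|\xi|^{2s}$. The correct relation is therefore $C_{n,s}=-(2\pi)^{2s}/(4\pi^2\gamma_{n,s})$. This does no harm to the proof, since your real-variable computation supplies the constant independently.
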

\begin{thm}\label{thm:new_form_nD}
  Let $m,n\in\mathbb{N}$, $n, m \ge 2$, $s \in (0,1)$, and let
  $\Omega\subset\mathbb{R}^n$ denote a domain with an infinitely
  differentiable boundary and $n_h\geq 0$ holes, as detailed in
  Section~\ref{prelim}, and let $f\in
  C^{m+0}(\overline{\Omega})$. Further, let
  $F_s:C(\overline{\Omega})\to C(\overline{\Omega})$ denote the
  operator given by
  \begin{equation}
    \label{eq:F_s-op}
    F_s[\phi](x) = \int_{\Omega} \frac{d^s(y)\phi(y)}{|x-y|^{n+2s-2}}
    dy, \quad x \in \overline{\Omega},
  \end{equation}
  where $d=d(x)$ is defined prior to Theorem~\ref{thm:fu_rel} and where
  $C_{n,s}$ is given by equation~\eqref{eq:new_const_cns}. Then,
  employing the operators $\mathscr{D}$, $\mathscr{S}$, $D$, $S$ and
  $V$ introduced in Section~\ref{prelim} and
  letting $\beta_j$ denote the solution of
  equation~\eqref{eq:betaj_def} ($j=1,\dots,n_h$), the weakly-singular system
  of equations
\begin{empheq}[left={\empheqlbrace}]{alignat=2}
    &\mathscr{D}[\zeta](x) + \sum_{j=1}^{n_h} a_j \mathscr{S}[\beta_j](x)- C_{n,s}F_s[\phi](x) =  -V[f](x), \quad x\in \Omega,
 \label{eq:reformul1}   \\
    &\frac{1}{2}\zeta(x)+D[\zeta](x) + \sum_{j=1}^{n_h} a_j S[\beta_j](x)- C_{n,s}F_s[\phi](x) =  -V[f](x), \quad x\in \partial\Omega.\label{eq:reformul2}
  \end{empheq}
  admits a solution
  \begin{equation}\label{eq:uniq_triple}
    (\phi,\zeta, a)\in  C^{m+s-0}(\overline{\Omega}) \cap C^{m+2s-0}(\Omega) \times C^{m+1-0}(\partial \Omega) \times \mathbb{C}^{n_h}.
  \end{equation}
  Further, the elements $\phi$ and $a$ of the solution vector are
  uniquely determined among all possible pairs
  $(\phi,a)\in C^2(\overline{\Omega})\times \mathbb{C}^{n_h}$, while
  the element $\zeta \in C(\partial\Omega)$ is uniquely determined if and only if
  $n_h=0$. Finally, the function
\begin{equation}
    \label{eq:FL-sol-ws}
    u(x) =\begin{cases} d^s(x)\phi(x) & \text{for } x\in \Omega, \\ 0 &  \text{for } x\in \mathbb{R}^n \setminus \Omega, \end{cases}
  \end{equation}
  is the solution of the boundary value problem~\eqref{frac_lapl}.
\end{thm}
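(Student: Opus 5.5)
Existence follows by combining Theorem~\ref{thm:fu_rel} with Theorem~\ref{thm:FL_into_Lap}, and uniqueness is proved by reversing the argument. Since $f\in C^{m+0}(\overline\Omega)$, Theorem~\ref{thm:fu_rel} gives the unique solution $u\in C(\mathbb{R}^n)$ of~\eqref{frac_lapl}, written as $u=d^s\phi$ in $\Omega$ with $\phi\in C^{m+s-0}(\overline\Omega)\cap C^{m+2s-0}(\Omega)$.

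\textbf{Existence.} Because $m\ge 2$, $u\in C^2(\Omega)$. Differentiating $d^s\phi$ gives $|\nabla u|\lesssim d^{s-1}$ near $\partial\Omega$, which is integrable, so the hypotheses of Theorem~\ref{thm:FL_into_Lap} hold. For $x\in\Omega$ this yields $\Delta_x\big(C_{n,s}F_s[\phi]\big)=f$. Using~\eqref{eq:poisson_solve}, the function $w:=C_{n,s}F_s[\phi]-V[f]$ is then harmonic in $\Omega$.
\begin{itemize}[label={}]
\item \emph{Boundary values of $w$.} I will show that $F_s[\phi]$ and $V[f]$ extend continuously to $\overline\Omega$; both kernels are weakly singular, so this follows by dominated convergence. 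Then $w$ solves the Laplace problem~\eqref{eq:laplace} with $h=w|_{\partial\Omega}$.
\item \emph{Representing $w$.} The layer-potential theory recalled in Section~\ref{prelim} gives $(\zeta,a)$ with $w=\mathscr{D}[\zeta]+\sum_j a_j\mathscr{S}[\beta_j]$ in $\Omega$, and with~\eqref{eq:inteq_lap} holding on $\partial\Omega$. These are exactly~\eqref{eq:reformul1}--\eqref{eq:reformul2}; the boundary equation follows from the jump relations~\eqref{single_layer_bd}--\eqref{double_layer_bd} together with continuity of $w$ up to the boundary.
\end{itemize}

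\textbf{Regularity of $\zeta$.} I need $h=(C_{n,s}F_s[\phi]-V[f])|_{\partial\Omega}\in C^{m+1-0}(\partial\Omega)$. Then the smoothing of $D$ on smooth boundaries and Fredholm theory give $\zeta\in C^{m+1-0}(\partial\Omega)$; the $\beta_j$ are $C^\infty$, so subtracting their terms does not affect this. The needed boundary regularity of $F_s[\phi]$ and $V[f]$ is the content of Lemmas~\ref{lem:boundary_reg_Fs} and~\ref{lem:boundary_reg_V}, which I expect to be the main obstacle. My approach there is to flatten $\partial\Omega$ locally and write $d^s(y)\phi(y)$ in boundary-normal coordinates. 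Tangential derivatives can be moved onto the density. The normal behaviour would be handled by explicit one-dimensional integrals of $t^s|x-y|^{2-n-2s}$, aiming to show that the traces gain roughly $2-0$ derivatives relative to the density minus the $s$ loss. Since $C^{m+s-0}$ plus a gain of $2-2s$ would give less than $m+1$, I would rely on the special structure $d^s\times$smooth rather than Hölder norms alone. I would first test this structure in the half-space model.

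\textbf{Uniqueness.} Suppose $(\phi,a)\in C^2(\overline\Omega)\times\mathbb{C}^{n_h}$ and some $\zeta$ solve the homogeneous system. Applying $\Delta$ to~\eqref{eq:reformul1} kills the layer potentials, so $C_{n,s}\Delta F_s[\phi]=0$ in $\Omega$. By Theorem~\ref{thm:FL_into_Lap}, $u=d^s\phi$ (extended by zero) satisfies $(-\Delta)^su=0$ in $\Omega$ with $u=0$ outside. The uniqueness part of Theorem~\ref{thm:fu_rel} then gives $u=0$, hence $\phi=0$. What remains is a pure layer-potential representation of zero with boundary data zero. The uniqueness of $a$ recalled after~\eqref{eq:inteq_lap} gives $a=0$, and~\cite[Prop.~3.37]{folland2020introduction} gives uniqueness of $\zeta$ exactly when $n_h=0$. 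Finally, $u$ in~\eqref{eq:FL-sol-ws} is the solution of~\eqref{frac_lapl} by construction.
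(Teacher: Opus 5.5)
Your proposal is correct and follows essentially the same route as the paper: Theorem~\ref{thm:fu_rel} and Theorem~\ref{thm:FL_into_Lap} make $C_{n,s}F_s[\phi]-V[f]$ harmonic in $\Omega$, and the layer-potential representation then yields~\eqref{eq:reformul1}--\eqref{eq:reformul2}. Lemmas~\ref{lem:boundary_reg_Fs}--\ref{lem:boundary_reg_V} plus bootstrapping with the smoothing property of $D$ give $\zeta\in C^{m+1-0}(\partial\Omega)$, and uniqueness comes from Theorem~\ref{thm:FL_into_Lap}, the uniqueness in Theorem~\ref{thm:fu_rel}, and the cited results of Folland; your reduction to the homogeneous system is just the linear-difference form of the paper's argument, and you may simply cite the two boundary-regularity lemmas, which the paper proves by the tangential integration-by-parts idea you sketch.
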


A 1D version of Theorem~\ref{thm:new_form_nD} takes a slightly
different form, as detailed in Theorem~\ref{thm:new_form_1D} below.
\begin{thm}\label{thm:new_form_1D}
  Let $m\in\mathbb{N}$, $ m \ge 2$, $n=1$, $s \in (0,1)$,
  $\Omega = (a,b)$, $f \in C^{m+0}[a, b]$,
\[
C_{s}= \begin{cases} -\frac{1}{\pi}\Gamma(2 s-1) \sin (\pi s)  , &s \neq \frac{1}{2}\\ \frac{1}{\pi} , &s=\frac{1}{2} \end{cases}, \qquad\qquad \text{and} \qquad\qquad  d(y) = (y-a)(b-y),
\]
and call $F_s:C(\overline{\Omega})\to C(\overline{\Omega})$ the operator 
given by
\[
  F_s[\phi](x) = \begin{dcases}  \int_{\Omega} |x-y|^{1-2s} d^s(y)\phi(y) dy, &s\ne\frac{1}{2}\\
    \int_{\Omega} \log(|x-y|) d^s(y)\phi(y) dy
                                                                              &s=\frac{1}{2}.\end{dcases}
\]
Then, letting $P(x)$ denote the double primitive of $f$, $P''(x) = f$, the weakly-singular equation
\begin{equation}\label{eq:new_form_1D}
  C_sF_s[\phi](x)  = P(x)-\zeta_1 x-\zeta_2, \quad x \in \overline{\Omega}
\end{equation}
for the unknowns 
\begin{equation}
  \label{eq:sol-1D}
  (\phi,\zeta_1,\zeta_2) \in  C^{m+s-0}(\overline{\Omega}) \cap C^{m+2s-0}(\Omega) \times \mathbb{C} \times \mathbb{C}
\end{equation}
(where, in particular $\zeta_1$ and $\zeta_2$ are complex constants)
admits a unique solution $(\phi,\zeta_1,\zeta_2)$. Finally, the
function $u$ given by~\eqref{eq:FL-sol-ws} with $\phi$ equal to the
first component of the solution vector~\eqref{eq:sol-1D} is the
solution of the boundary value problem~\eqref{frac_lapl} with $n=1$.
\end{thm}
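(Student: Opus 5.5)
Rather than starting from the $n$-dimensional formula \eqref{eq:frac-from-clas}, which requires $n\ge 2$, I would first establish its one-dimensional counterpart and then integrate it twice. The claimed identity is that for $u\in C^2(a,b)\cap C(\mathbb{R})$ vanishing outside $(a,b)$, with $u'$ integrable,
\begin{equation*}
  (-\Delta)^s u(x) = C_s\,\frac{d^2}{dx^2}\int_a^b K_s(x-y)\,u(y)\,dy,\qquad x\in(a,b),
\end{equation*}
with $K_s(z)=|z|^{1-2s}$ for $s\neq\tfrac12$ and $K_s(z)=\log|z|$ for $s=\tfrac12$. I would prove it via Fourier multipliers. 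In the distributional sense, $\widehat{|z|^{1-2s}}=c\,|\xi|^{2s-2}$, and $-\partial_x^2$ contributes $|\xi|^2$, so the product gives $|\xi|^{2s}$. The constant follows from the standard Riesz-potential formula, using $\Gamma(1-s)\Gamma(s)=\pi/\sin\pi s$ and the duplication formula; this should yield $C_s=-\pi^{-1}\Gamma(2s-1)\sin(\pi s)$. At $s=\tfrac12$ the factor $\Gamma(2s-1)$ blows up, so I would take the limit $s\to\tfrac12$ after subtracting a constant (which is annihilated by $\partial_x^2$). This leaves the logarithmic kernel with constant $1/\pi$, since $(|z|^{1-2s}-1)/(1-2s)\to\log|z|$ and $\Gamma(2s-1)(1-2s)\to -1$. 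Alternatively, I could mimic the real-variable proof of Theorem~\ref{thm:FL_into_Lap}: integrate by parts, moving one derivative onto $u$, where $u'$ is integrable. That route justifies the differentiation under the integral for functions with only $d^s$ boundary behaviour.

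For existence, Theorem~\ref{thm:fu_rel} with $t=m$ and $d(y)=(y-a)(b-y)$ (which qualifies as a valid $d$) gives the unique solution $u=d^s\phi$ with $\phi\in C^{m+s-0}[a,b]\cap C^{m+2s-0}(a,b)$. Since $\phi$ is $C^2$ in the interior and $u'\sim d^{s-1}$ near the endpoints is integrable, the 1D identity applies. Hence $g(x):=C_sF_s[\phi](x)$ satisfies $g''=f=P''$ on $(a,b)$. It follows that $P-g$ is affine on $(a,b)$, say $\zeta_1x+\zeta_2$. Both $P$ and $g$ are continuous on $[a,b]$, because the kernel is weakly singular and $d^s\phi$ is bounded. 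The equation \eqref{eq:new_form_1D} therefore holds on $\overline\Omega$, and the last statement of the theorem holds by construction.

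For uniqueness, suppose two triples solve \eqref{eq:new_form_1D}. Their difference $(\psi,\eta_1,\eta_2)$ satisfies $C_sF_s[\psi]=-\eta_1x-\eta_2$. Applying $\partial_x^2$ together with the 1D identity to $w=d^s\psi$ gives $(-\Delta)^s w=0$ in $(a,b)$ with $w=0$ outside. Note that $\psi\in C^{m+2s-0}(\Omega)$ with $m\ge2$, so $w$ is $C^2$ inside, and $w$ is continuous on $\mathbb{R}$. The uniqueness part of Theorem~\ref{thm:fu_rel} (with $f=0$) then forces $w\equiv0$, hence $\psi\equiv0$. Then $\eta_1x+\eta_2\equiv0$ on $[a,b]$, so $\eta_1=\eta_2=0$.

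I expect the main obstacle to be the rigorous 1D identity for functions $u=d^s\phi$, which are not smooth at $a$ and $b$. Specifically, two things need care: justifying that the second derivative of the weakly singular potential matches the hypersingular principal-value integral pointwise inside $(a,b)$, and pinning down the constant, including the logarithmic case $s=\tfrac12$. I would handle the first by splitting $u$ with a cutoff into a compactly supported $C^2$ piece, where the Fourier argument is clean, and a piece supported away from $x$, where both sides are smooth kernel integrals that can be compared directly by differentiating under the integral sign.
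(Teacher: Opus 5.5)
Your proposal is correct. From the point where Theorem~\ref{thm:fu_rel} supplies $u=d^s\phi$ onward, it coincides with the paper's argument. The paper simply repeats the proof of Theorem~\ref{thm:new_form_nD}, with affine functions (the 1D harmonic functions) in place of the layer potentials, and obtains uniqueness by applying $d^2/dx^2$ and invoking the uniqueness part of Theorem~\ref{thm:fu_rel}.

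The genuine difference is how you establish the 1D identity. The paper follows the real-variable route of Theorem~\ref{thm:FL_into_Lap}: it integrates by parts over $(a,b)$ to move one derivative onto $u$, then pulls the $x$-derivative out of the principal value. For this it uses the 1D counterparts of Lemmas~\ref{lem:FL_op_grad} and~\ref{lem:und_int} from \cite{oscar2018}, and this is where integrability of $u'$ is needed.

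Your cutoff splitting localizes instead. For $(1-\chi)u$, which vanishes near $x$, both sides are absolutely convergent integrals. The identity then reduces to the pointwise kernel relations $C_s\,\partial_z^2|z|^{1-2s}=-c_{1,s}|z|^{-1-2s}$ and $\frac{1}{\pi}\,\partial_z^2\log|z|=-c_{1,1/2}|z|^{-2}$. For the $C^2_c$ piece $\chi u$, the Fourier symbol computation applies, and your constant is right, including the $s=\tfrac12$ limit.

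What your route buys:
\begin{itemize}
\item a weaker hypothesis: only $C^2$ regularity near $x$ and boundedness of $u$, with no integrability of $u'$;
\item an explicit check of the constant. Incidentally, the paper's displayed 1D relation omits the factor $C_s$, which your version correctly includes.
\end{itemize}
Its price is the distributional Fourier calculus for $|z|^{1-2s}$ and $\log|z|$. You also need an approximation step to turn a distributional identity into a pointwise one, since a $C^2_c$ function is not a Schwartz function. The paper's route stays elementary and parallels the $n$-dimensional proof, at the cost of leaning on the cited 1D lemmas.
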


The proofs of Theorems~\ref{thm:FL_into_Lap} through~\ref{thm:new_form_1D}
are given at the end of this section, following a sequence of four
preparatory lemmas. The first two lemmas re-express the FL
operator~\eqref{eq:frac_op} in terms of the classical Laplace operator
and a volumetric weakly singular integral operator. The final two
lemmas, in turn, concern the regularity of the boundary restrictions
$\left . F_s[\phi]\right|_{\partial\Omega}$ and
$\left . V[f]\right|_{\partial\Omega}$, respectively. Throughout
this paper the $n$-dimensional open ball centered at $x$ of radius $a$
is denoted by
\begin{equation}
  \label{eq:ball}
  B_a(x)=\{y\in\mathbb{R}^n\ :\ |y-x|<a \}.
\end{equation}

\begin{lem}\label{lem:FL_op_grad}
  Under the hypothesis of Theorem~\ref{thm:FL_into_Lap}, the integral
  $\int_{\Omega} |x-y|^{-n-2s+2}\frac{\partial u}{\partial y_i}(y) dy$
  is a differentiable function of $x_i$ ($x \in \Omega$,
  $i=1,\cdots,n$), and we have
\begin{equation} \label{eq:FL_op_grad}
(-\Delta)^{s} u(x)=C_{n,s} \nabla_{x} \cdot \int_{\Omega} \frac{1}{|x-y|^{n+2s-2}}\nabla u(y) dy, \quad x \in \mathbb{R}^n \setminus \partial \Omega.
\end{equation}
\end{lem}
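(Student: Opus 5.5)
Our plan is to start from the hypersingular definition~\eqref{eq:frac_op} and integrate by parts radially, so that one derivative moves from the kernel onto $u$. Set $\alpha = n+2s-2$, so $\alpha>0$ because $n\ge 2$. The basic identity is
\[
\nabla_y |x-y|^{-\alpha} = \alpha\,(x-y)|x-y|^{-\alpha-2},
\qquad
\nabla_y\cdot\bigl[(y-x)|x-y|^{-n-2s}\bigr] = -2s\,|x-y|^{-n-2s},
\]
which holds for $y\neq x$. Hence $|x-y|^{-n-2s} = -\tfrac{1}{2s}\nabla_y\cdot[(y-x)|x-y|^{-n-2s}]$.

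Fix $x\in\mathbb{R}^n\setminus\partial\Omega$ and consider the truncated integral over $\{\varepsilon<|x-y|<R\}$, where $R$ is large enough that the ball contains $\Omega$. Apply the divergence theorem to $(u(x)-u(y))\,(y-x)|x-y|^{-n-2s}$. Because $u\in C(\mathbb{R}^n)$, $u\in C^2(\Omega)$, $u=0$ off $\Omega$, and $\nabla u\in L^1(\Omega)$, we have $u\in W^{1,1}(\mathbb{R}^n)$ with distributional gradient $\chi_\Omega\nabla u$; there is no jump across $\partial\Omega$ since $u$ is continuous. This justifies the integration by parts, provided we approximate $u$ by mollifications and pass to the limit, using that $\Omega$ is Lipschitz.

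\begin{itemize}[label={}]
\item \emph{Boundary terms.} On $|y-x|=R$ the integrand is $u(x)R^{1-n-2s}$ times a sphere area of order $R^{n-1}$, so it tends to $0$. On $|y-x|=\varepsilon$, with $x\in\Omega$, a Taylor expansion gives $u(x)-u(y)=O(\varepsilon)$. The resulting term is $O(\varepsilon^{2-2s})\to 0$, after noting that the first-order term cancels by symmetry. If $x\in\overline\Omega^c$, the integrand vanishes near $x$.
\item \emph{Bulk term.} What remains is
\[
(-\Delta)^s u(x) = -\frac{c_{n,s}}{2s}\int_{\Omega} \nabla u(y)\cdot (x-y)\,|x-y|^{-n-2s}\,dy
= -\frac{c_{n,s}}{2s\alpha}\int_\Omega \nabla u(y)\cdot\nabla_x|x-y|^{-\alpha}\,dy,
\]
taken as a principal value. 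The kernel is $O(|x-y|^{1-n-2s})$, which is integrable only when $s<1/2$. For $s\ge 1/2$ the principal value is still meaningful, because near $x$ we have $\nabla u(y)=\nabla u(x)+O(|x-y|)$ and the leading part cancels by oddness.
\end{itemize}

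Next we justify moving $\nabla_x$ outside the integral. Define $G_i(x)=\int_\Omega |x-y|^{-\alpha}\partial_i u(y)\,dy$. Since $\alpha<n$, this is a weakly singular integral of an $L^1$ function. To show that $\partial_{x_i}G_i$ exists and equals the principal value above, split $\Omega$ into $B_\delta(x)$ and its complement. On the complement, differentiate under the integral sign; the kernel is smooth there, and $\nabla u\in L^1$ suffices. On $B_\delta(x)$, where $u\in C^2$, write $\partial_i u(y)=\partial_i u(x)+(\partial_i u(y)-\partial_i u(x))$. The constant part contributes $\partial_i u(x)\,\partial_{x_i}\int_{B_\delta(x)\cap\Omega}|x-y|^{-\alpha}\,dy$; this integral is constant in $x$ for small $\delta$ since $x$ is interior, so the derivative is zero, which matches the vanishing principal value of the odd kernel. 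The Lipschitz remainder gives an absolutely integrable kernel $O(|x-y|^{2-n-2s})$, so differentiation under the integral is allowed. For $x\notin\overline\Omega$ everything is smooth. This establishes differentiability in $x_i$ together with~\eqref{eq:FL_op_grad}, with constant $C_{n,s}=-c_{n,s}/(2s\alpha)$.

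Finally, we check that $-c_{n,s}/(2s(n+2s-2))$ equals~\eqref{eq:new_const_cns}. Use $\Gamma(s+n/2)=(s+n/2-1)\Gamma(s+n/2-1)$, which gives
\[
-\frac{c_{n,s}}{2s(n+2s-2)}=-\frac{4^{s}\,\Gamma(s+n/2-1)}{4\pi^{n/2}\Gamma(1-s)}.
\]
Then the reflection formula $1/\Gamma(1-s)=\Gamma(s)\sin(\pi s)/\pi$ yields exactly the stated $C_{n,s}$.

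We expect the main obstacle to be the rigorous integration by parts in the principal-value setting with only $\nabla u\in L^1(\Omega)$ near $\partial\Omega$. This needs the $W^{1,1}$ extension argument across the Lipschitz boundary, combined with careful handling of the limit $\varepsilon\to0$ when $s\ge1/2$.
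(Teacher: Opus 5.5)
Your argument is correct in outline, but it organizes both main steps differently from the paper.

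\textbf{Integration by parts.} The paper substitutes $y\mapsto x-y$ and splits $\mathbb{R}^n$ into $x-\Omega$ and its complement. It then applies Green's identity with $\Delta|y|^{-(n+2s-2)}=2s(n+2s-2)|y|^{-n-2s}$ on $(x-\Omega)\setminus B_\varepsilon(0)$, and checks by hand that the boundary integral over $x-\partial\Omega$ cancels the integral over the complement. The cases $x\in\Omega$ and $x\notin\Omega$ are handled separately. You apply the divergence theorem once, on the whole annulus $\{\varepsilon<|x-y|<R\}$, with the radial field $(y-x)|x-y|^{-n-2s}=-\tfrac{1}{n+2s-2}\nabla_y|x-y|^{-(n+2s-2)}$, so the underlying identity is the same. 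The $\partial\Omega$ cancellation is absorbed into the fact that the zero extension of $u$ lies in $W^{1,1}(\mathbb{R}^n)$ with gradient $\chi_\Omega\nabla u$. This treats interior and exterior points uniformly and is cleaner. That $W^{1,1}$ fact follows from continuity and $u|_{\partial\Omega}=0$ alone: use the truncations $\mathrm{sign}(u)(|u|-\eta)^+$ and let $\eta\to 0$. So no mollification across a Lipschitz boundary is needed. The small-sphere term is handled exactly as in the paper.

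\textbf{Interchange and constant.} To move $\nabla_x$ outside the principal value, the paper uses Lemma~\ref{lem:und_int}. Its proof in Appendix~\ref{Ap1} relies on truncations, uniform convergence of derivatives, and an explicit cancellation $I^1+I^2=0$ on a fixed sphere. Your near/far split with $\partial_i u$ frozen at the center is much shorter. Your check that $-c_{n,s}/(2s(n+2s-2))$ equals \eqref{eq:new_const_cns}, which the paper leaves implicit, is correct.

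\textbf{Corrections.} There is a sign slip in your middle display: the bulk term is $+\tfrac{c_{n,s}}{2s}\int_\Omega \nabla u(y)\cdot(x-y)|x-y|^{-n-2s}\,dy$. Your final form and $C_{n,s}$ are right.

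More importantly, the interchange step, as written, does not hold piece by piece.
\begin{itemize}
\item If $B_\delta(x)$ moves with $x$, the frozen part $\partial_i u(x)\int_{B_\delta(x)}|x-y|^{-\alpha}dy$ has derivative $\partial_i^2u(x)\int_{B_\delta(0)}|z|^{-\alpha}dz\neq 0$. The remainder's derivative also picks up the derivative of $-\partial_i u(x)$ and a flux term from the moving sphere, and the complement integral picks up the opposite flux. Everything cancels, but your sketch asserts each piece separately.
\item If instead you fix the ball at the evaluation point $x_0$, the frozen part is radial (not constant) with zero gradient at $x_0$, which is fine. But integrability of the differentiated kernel does not by itself justify differentiating the remainder under the integral sign. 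For $s\ge 1/2$ and $x\neq x_0$, the function $\partial_{x_i}|x-y|^{-\alpha}\,(\partial_i u(y)-\partial_i u(x_0))$ is not integrable near $y=x$. You would need a difference-quotient estimate, splitting $|y-x_0|<2|h|$ from $|y-x_0|\ge 2|h|$.
\end{itemize}

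The simplest repair is a cutoff $\chi\in C_c^\infty(\Omega)$ equal to $1$ near $x_0$. The near part $\int|z|^{-\alpha}(\chi\,\partial_i u)(x-z)\,dz$ can be differentiated under the integral sign, since $\chi\,\partial_i u\in C^1_c$. Integrating by parts back on $\{|x-y|>\varepsilon\}$ then gives the principal value; the boundary term is $O(\varepsilon^{2-2s})$ by oddness.
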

\begin{proof}
 Using a linear change of
  integration variables, equation~\eqref{eq:frac_op} is re-expressed in
  the form
  \begin{equation}
      (-\Delta)^{s} u(x)=c_{n,s} P.V\int_{\mathbb{R}^n}
    (u(x)-u(x-y))|y|^{-n-2s}dy.
  \end{equation}
\noindent
Noting that the support of $u(x-y)$ as a function of $y$ is contained
in the set $x-\Omega = \left\{ x-w : w \in \Omega \right\}$ and using
the decomposition
$\mathbb{R}^n=(x-\Omega) \cup \left(\mathbb{R}^n\setminus
  (x-\Omega)\right)$ we obtain
\begin{equation}\label{eq:FL_dom_decomp}
(-\Delta)^{s} u(x) = c_{n,s} \left\{ P.V\int_{x-\Omega} (u(x)-u(x-y))|y|^{-n-2s}dy + \int_{\mathbb{R}^n\setminus(x-\Omega)}u(x)|y|^{-n-2s}dy \right\}. 
\end{equation}
(The Cauchy principal value integral is not included in the second
term in \eqref{eq:FL_dom_decomp} as the integrand is smooth for all
$x\in\mathbb{R}^n$, since $u(x)=0$ for $x \notin \Omega$, and
$0 \notin \mathbb{R}^n\setminus(x-\Omega)$ for $x \in \Omega$.)

We first establish the lemma in the case $x \notin \Omega$, or
$0 \notin x-\Omega$. In this case the integrand in the first integral
in~\eqref{eq:FL_dom_decomp} is smooth throughout the domain of
integration and the second integral vanishes; we thus obtain
\begin{equation}
(-\Delta)^{s} u(x)= -c_{n,s}\int_{x-\Omega} u(x-y)|y|^{-n-2s}dy.
\end{equation}
Since $|y|^{-n-2s}=\frac{1}{2s(2s+n-2)}\Delta \frac{1}{|y|^{n+2s-2}}$,
and since $u\in C^2(\Omega)\cap C(\overline{\Omega})$ and vanishes
on $\partial \Omega$, integration by parts over the Lipschitz integration domain~\cite[Thm. 3.34]{mclean2000} yields
\begin{equation}\label{eq:FL_xout_Intbyparts}
    (-\Delta)^{s} u(x) = -\frac{c_{n,s}}{2s(2s+n-2)} \int_{x-\Omega} (\nabla u)(x-y)\nabla \frac{1}{|y|^{n+2s-2}}dy.
\end{equation}
Letting $C_{n,s} = -\frac{c_{n,s}}{2s(2s+n-2)}$ and using the
change of variables $z = x - y$, \eqref{eq:FL_xout_Intbyparts} becomes
\begin{equation}\label{int-part1}
  (-\Delta)^{s} u(x) = C_{n,s} \int_{\Omega}\nabla u(z)\nabla_{x} \frac{1}{|x-z|^{n+2s-2}}dz.
\end{equation}
Clearly, in the present case $x \notin \Omega$ the integrand
in~\eqref{int-part1} is a continuously differentiable function of $x$,
and, thus, the gradient in $x$ can be moved outside of the integral
sign, which establishes~\eqref{eq:FL_op_grad} for $x \notin
\Omega$.

To consider the case $x \in \Omega$, in turn, we let
$D_{\varepsilon} = (x-\Omega)\setminus B_{\varepsilon}(0)$ (see
eq.~\eqref{eq:ball}), and we call $\widehat{\xi}_1$ the outer
normal to $x-\Omega$ and $\widehat{\xi}_2$ the inner normal to
$\partial B_{\varepsilon}(0)$. Using integration by parts the first
term in \eqref{eq:FL_dom_decomp} becomes
\begin{gather}
c_{n,s}P.V\int_{x-\Omega} (u(x)-u(x-y))|y|^{-n-2s}dy =-C_{n,s}\lim_{ \varepsilon \to 0^{+}} \int_{D_{\varepsilon}}(u(x)-u(x-y))\Delta |y|^{-n-2s+2}dy, \label{eq:main_int}\\
 = \lim_{ \varepsilon \to 0^{+}}\left[C_{n,s}\int_{D_{\varepsilon}}\nabla_{y}(u(x)-u(x-y))\cdot \nabla_{y} |y|^{-n-2s+2}dy\right.  \\ 
-C_{n,s}\int_{x-\partial\Omega}\left(\nabla \frac{1}{|y|^{n+2s-2}}\cdot \widehat{\xi}_1\right)\left(u(x)-u(x-y)\right)dS_{y} \label{eq:outer_bd}\\
\left. -C_{n,s} \int_{\partial B_{\varepsilon}(0)}\left(\nabla \frac{1}{|y|^{n+2s-2}}\cdot \widehat{\xi}_2\right)\left(u(x)-u(x-y)\right)dS_{y}\right]. \label{eq:inner_bd}
\end{gather}
The integral in~\eqref{eq:outer_bd} is independent of $\varepsilon$
and will subsequently cancel out the second term
in~\eqref{eq:FL_dom_decomp}. Indeed, using integration by parts we
obtain
\begin{equation}
c_{n,s}\int_{\mathbb{R}^n\setminus(x-\Omega)}\!\! u(x)|y|^{-n-2s}dy = -C_{n,s} \int_{\mathbb{R}^n\setminus(x-\Omega)} \!\!\!\! u(x)\Delta |y|^{-n-2s+2}dy=C_{n,s}\int_{x-\partial\Omega}\!\! \left(\nabla \frac{1}{|y|^{n+2s-2}}\cdot \widehat{\xi}_1\right)u(x)dS_{y}
\end{equation}
which coincides with the negative of~\eqref{eq:FL_dom_decomp} since
$u(x-y)$ vanishes for $y\in x-\partial\Omega$. We next simplify the
expression \eqref{eq:inner_bd} and show that it vanishes in the limit
as $\varepsilon$ tends to zero. Indeed, for the gradient
in~\eqref{eq:FL_dom_decomp} we have
\begin{equation}
    \nabla \frac{1}{|y|^{n+2s-2}}\cdot \widehat{\xi}_2 = \left(-(n+2s-2)\frac{y}{|y|^{n+2s}}\right)\cdot \left(-\frac{y}{\varepsilon}\right)=\frac{n+2s-2}{\varepsilon^{n+2s-1}}.
\end{equation}
Further, parametrizing $\partial B_{\varepsilon}(0)$ using
$n$-dimensional spherical coordinates $y = \varepsilon \xi$,
\begin{equation}\label{spher_1}
  \begin{aligned}
    &\xi_1 =  \cos{\phi_1}, \quad \xi_j = \cos{\phi_j} \prod_{k=1}^{j-1} \sin{\phi_k}, \quad (j=2,\cdots,n-2) \\
    &\xi_{n-1} = \cos{\theta} \prod_{k=1}^{n-2} \sin{\phi_k}, \quad \xi_n =  \sin{\theta} \prod_{k=1}^{n-2} \sin{\phi_k},
  \end{aligned}
  \end{equation}
($0 \le \phi_j \le \pi$, $0 \le \theta < 2\pi$), letting
$\phi = (\phi_1,\dots,\phi_{n-2})$ and defining~\cite{folland1999}
\begin{equation}\label{spher_elems}
\xi(\phi,\theta) = (\xi_1,\dots, \xi_n),\qquad  F(\phi) = \prod_{k=1}^ {n-2} (\sin{\phi_k})^{n-k-1}, \qquad dS_{y}=\varepsilon^{n-1}F(\phi) d\phi d\theta,
\end{equation}
the integral~\eqref{eq:inner_bd} becomes
\begin{equation}\label{eq:int_spherical_coord}
-C_{n,s}\frac{n+2s-2}{\varepsilon^{2s}} \left[ \int_{[0,\pi]^{n-2} \times [0,\pi]}+\int_{[0,\pi]^{n-2}\times [\pi,2\pi]} \left(u(x)-u\left(x-\varepsilon \xi(\phi,\theta)\right)\right)F(\phi)  d\phi d\theta \right].
\end{equation}
Now, introducing the changes of variables $\theta \mapsto \theta+\pi$
and $\phi_j \mapsto \pi-\phi_j$ ($1\leq j\leq n-2$) in the second
integral in~\eqref{eq:int_spherical_coord} and using Taylor's theorem,
$u(x\pm h) = u(x) \pm \nabla u(x) \cdot h +
\frac{1}{2}h^T Hu(\mu^\pm)h$ (where $H$ denotes the
Hessian matrix and $\mu^\pm_i$ is between $x_i$ and $x_i\pm h_i$), the
expression~\eqref{eq:int_spherical_coord} becomes
\begin{gather*}
    -C_{n,s}\frac{n+2s-2}{\varepsilon^{2s}} \left[ \int_{[0,\pi]^{n-2} \times [0,\pi]}(2u(x)-u(x-\varepsilon \xi(\phi,\theta))-u(x+\varepsilon \xi(\phi,\theta)))F(\phi) d\phi d\theta  \right] \\
    = -C_{n,s}\frac{n+2s-2}{\varepsilon^{2s}} \left[ \int_{[0,\pi]^{n-2} \times [0,\pi]} \frac{\varepsilon^2}{2}\left( \xi(\phi,\theta)^T(Hu(\mu^+)+Hu(\mu^-))\xi(\phi,\theta)\right)F(\phi)  d\phi d\theta \right].
\end{gather*}
Since $s<1$ we see that the term \eqref{eq:inner_bd} tends to zero
as $\varepsilon \to 0^+$.

In all, the discussion above tells us that
\begin{align}
(-\Delta)^{s} u(x) &= C_{n,s}\lim_{\varepsilon \to 0}\int_{D_{\varepsilon}} (\nabla u)(x-y) \cdot \nabla |y|^{-n-2s+2}dy \nonumber \\
                   &= C_{n,s} P.V \int_{x-\Omega} (\nabla u)(x-y) \cdot \nabla |y|^{-n-2s+2}dy \nonumber \\
  &= C_{n,s} P.V \int_{\Omega} \nabla u(y) \cdot \nabla_{x} |x-y|^{-n-2s+2}dy. \label{eq:2nd_case_int}
\end{align}
Since, by hypothesis, the function
$\frac{\partial u}{\partial y_j}(y)$ is integrable over $\Omega$ for
each $j=1,\cdots,n$, Lemma~\ref{lem:und_int} below may be applied to
$w=\frac{\partial u}{\partial y_j}(y)$ to show that the $x$-gradient
in~\eqref{eq:2nd_case_int} can be moved outside the Principal Value
integral, and, thus,
\begin{equation}\label{eq:grad_PV_inter_2}
(-\Delta)^{s} u(x)  =  C_{n,s}\nabla_{x} \cdot \int_{\Omega}\frac{1}{|x-y|^{n+2s-2}}\nabla u(y) dy, 
\end{equation}
as desired. The proof is now complete.
\end{proof}
In what follows $L^1(\Omega)$ denotes the set of all integrable
functions defined in the set $\Omega$.

\begin{lem}\label{lem:und_int}
  Let $s\in (0,1)$, $n\in\mathbb{N}$, $n \ge 2$, and let
  $w \in C^{1}(\Omega)\cap L^1(\Omega)$ where
  $\Omega\subset\mathbb{R}^n$ is an open bounded Lipschitz
  domain. Then for $i=1,\cdots,n$ the integral
  $\int_{\Omega}w(y)|x-y|^{-n-2s+2} dy$ is differentiable with respect
  to $x_i$ at each $x \in \Omega$, and we have
 \begin{equation}
   \frac{\partial }{\partial x_i}\int_{\Omega}w(y)|x-y|^{-n-2s+2} dy 
   = P.V \int_{\Omega} w(y) \frac{\partial }{\partial x_i} |x-y|^{-n-2s+2}dy,\quad x \in \Omega.  \label{eq:PV_grad_inter}
\end{equation}
\end{lem}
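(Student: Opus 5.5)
The plan is to localize around the fixed point $x\in\Omega$ and treat the near and far contributions separately. Fix $x\in\Omega$ and $r>0$ with $\overline{B_r(x)}\subset\Omega$ (see~\eqref{eq:ball}). Choose a cutoff $\psi\in C_c^\infty(B_r(x))$ with $\psi\equiv 1$ on $B_{r/2}(x)$, and split $w=\psi w+(1-\psi)w$. Write $K(z)=|z|^{-n-2s+2}$. Then $\int_\Omega w(y)K(x-y)\,dy = I_1(x)+I_2(x)$, where $I_1$ carries $\psi w$ and $I_2$ carries $(1-\psi)w$.

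\emph{Far part.} The function $(1-\psi)w$ vanishes on $B_{r/2}(x)$. Hence for $x'$ with $|x'-x|<r/4$ the kernel $K(x'-y)$ and its $x'$-derivatives are smooth and uniformly bounded on the support of the integrand. Since $w\in L^1(\Omega)$, dominated convergence lets us differentiate $I_2$ under the integral sign. This gives $\partial_{x_i}I_2(x)=\int_\Omega (1-\psi)w(y)\,\partial_{x_i}K(x-y)\,dy$, which is an absolutely convergent integral.

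\emph{Near part.} Since $w\in C^1(\Omega)$, the function $g=\psi w$ lies in $C^1_c(\mathbb{R}^n)$. Changing variables gives $I_1(x)=\int g(x-z)K(z)\,dz$. Because $K$ is locally integrable (as $n+2s-2<n$) and $\partial_i g$ is bounded and compactly supported, dominated convergence gives
\[
\partial_{x_i}I_1(x)=\int (\partial_i g)(x-z)\,K(z)\,dz=\lim_{\varepsilon\to0^+}\int_{|z|>\varepsilon}(\partial_i g)(x-z)\,K(z)\,dz .
\]
On $\{|z|>\varepsilon\}$ we integrate by parts in $z$, using $\partial_{z_i}[g(x-z)]=-(\partial_i g)(x-z)$. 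This turns the integral into $\int_{|z|>\varepsilon} g(x-z)\,\partial_{x_i}[K(x-y)]\big|_{y=x-z}\,dz$ plus a boundary term of the form $\pm\int_{|z|=\varepsilon} g(x-z)\,\varepsilon^{-n-2s+2}\,\nu_i\,dS_z$, where $\nu$ is the unit normal to the sphere. The signs need to be checked so that the volume term matches the right-hand side of~\eqref{eq:PV_grad_inter}.

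The main obstacle is showing that the boundary term vanishes. A naive bound gives only $O(\varepsilon^{1-2s})$, which does not tend to zero when $s\ge 1/2$. To handle this, write $g(x-z)=g(x)+O(\varepsilon)$ on $|z|=\varepsilon$, using the $C^1$ regularity of $g$. The constant part $g(x)$ integrates to zero because $\int_{|z|=\varepsilon}\nu_i\,dS_z=0$ by symmetry. The remainder is $O(\varepsilon\cdot\varepsilon^{-n-2s+2}\cdot\varepsilon^{n-1})=O(\varepsilon^{2-2s})$, which tends to $0$ since $s<1$. Consequently $\partial_{x_i}I_1(x)$ equals the principal value $\lim_{\varepsilon\to0}\int_{|x-y|>\varepsilon}\psi w(y)\,\partial_{x_i}K(x-y)\,dy$, and in particular this limit exists.

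\emph{Conclusion.} Adding the two pieces, $I_1+I_2$ is differentiable in $x_i$ at $x$. Since $\psi w + (1-\psi) w = w$ and the $(1-\psi)$ contribution is absolutely convergent and vanishes near $x$, the sum of the two derivatives equals $\mathrm{P.V.}\int_\Omega w(y)\,\partial_{x_i}|x-y|^{-n-2s+2}\,dy$, which is~\eqref{eq:PV_grad_inter}.
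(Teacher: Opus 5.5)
Your proof is correct, but it takes a different and considerably lighter route than the paper's.

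The paper works with the sharp truncations $H_\varepsilon(x)=\int_{\Omega\setminus B_\varepsilon(x)}w(y)|x-y|^{-n-2s+2}\,dy$. It splits them over $\Omega\setminus K_{z,2}$, $K_{z,2}\setminus B_\delta(x)$ and $B_\delta(x)\setminus B_\varepsilon(x)$ for a fixed ball $K_{z,2}$. The middle piece is differentiated by the Leibniz rule in spherical coordinates centred at the moving point $x$, which creates a boundary term $I^1$ on $\partial K_{z,2}$. An integration by parts then creates a second term $I^2$ on $\partial K_{z,2}$, plus the small-sphere term $I^3_\varepsilon$. The paper shows $I^1+I^2=0$ through the Gram-determinant computation of Lemma~\ref{lem:cov_nd_radial_func}. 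Finally it exchanges limit and derivative, using uniform convergence of $\partial_{x_i}H_\varepsilon$ on $K_{z,1}$ together with \cite[Thm.~7.17]{rudin1976}.

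Your smooth cutoff $\psi$ removes the artificial interface $\partial K_{z,2}$ altogether. The cancelling pair $I^1,I^2$ and the auxiliary lemma therefore never appear. Writing the near part as the convolution $\int g(x-z)K(z)\,dz$ with $g=\psi w\in C^1_c(\mathbb{R}^n)$ lets you move the derivative onto $g$ by plain dominated convergence, with no limit-interchange theorem.

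The decisive step is the same in both proofs. The term on $|z|=\varepsilon$ is naively $O(\varepsilon^{1-2s})$, loses its leading part by odd symmetry, and is $O(\varepsilon^{2-2s})$.

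The sign you left open does come out right. The outward normal of $\{|z|>\varepsilon\}$ on $|z|=\varepsilon$ is $-z/\varepsilon$. The volume term is then exactly $\int_{|x-y|>\varepsilon}\psi(y)w(y)\,\partial_{x_i}|x-y|^{-n-2s+2}\,dy$, since $\partial_{x_i}[K(x-y)]=(\partial_iK)(z)$ at $z=x-y$.

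What each route buys: the paper's identifies the derivative directly as the uniform limit of the truncated derivatives $\partial_{x_i}H_\varepsilon$, which stays close to the principal-value definition. Yours yields the formula $\partial_{x_i}I_1(x)=\int(\partial_i g)(x-z)K(z)\,dz$ valid for every $x$, which also makes continuity of the near-part derivative immediate.
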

\begin{proof}
    See appendix \ref{Ap1}.
\end{proof}

The following lemma plays an important role in the regularity proof
for the double-layer unknown $\zeta$ in
equations~\eqref{eq:reformul1}-\eqref{eq:reformul2}.
\begin{lem}\label{lem:boundary_reg_Fs}
  Let $n \in \mathbb{N}$ with $n \geq 2$, and let
  $\Omega \subset \mathbb{R}^n$ be a bounded open domain with an
  infinitely differentiable boundary. Suppose the pair $(s,n)$
  satisfies either $s \in (0,1)$ with $n = 2$, or $s \in [0,1)$ with
  $n \geq 3$. Then, for any non-negative integer $m$ and any function
  $\phi \in C^m(\overline{\Omega})$, the boundary trace
  $F_s[\phi]\big|_{\partial\Omega}$ of $F_s[\phi]$
  (eq.~\ref{eq:F_s-op}) satisfies
\[
F_s[\phi]\big|_{\partial \Omega} \in C^{m+1}(\partial \Omega).
\]
\end{lem}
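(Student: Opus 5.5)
The plan is to localize near $\partial\Omega$, flatten the boundary, and differentiate only in tangential directions. In those coordinates $m$ derivatives can be moved onto the density by a translation. The last derivative stays on the kernel, and the factor $d^s(y)$ absorbs the extra singularity this creates.

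\textbf{Localization.} Fix a finite partition of unity $\{\chi_k\}$ on $\overline{\Omega}$ with two kinds of pieces: $\chi_0$ supported in the interior at positive distance from $\partial\Omega$, and the remaining $\chi_k$ supported in boundary collar charts. For the interior piece, the integrand $|x-y|^{-(n+2s-2)}d^s(y)\chi_0(y)\phi(y)$ is $C^\infty$ in $x$ for $x$ near $\partial\Omega$. Its trace is therefore $C^\infty(\partial\Omega)$.

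For a collar piece, use coordinates $y=\psi(y',t)=\gamma(y')-t\nu(\gamma(y'))$, with $y'\in U\subset\mathbb{R}^{n-1}$ and $0\le t<t_0$, where $\gamma$ parametrizes $\partial\Omega$. By the standing assumption on $d$, $d^s(y)=t^s g(y',t)$ with $g$ smooth and positive. Hence
\[
F_s[\chi_k\phi](\gamma(x'))=\int_0^{t_0}\!\!\int_{\mathbb{R}^{n-1}} K(x',y',t)\,t^s\,\Phi(y',t)\,dy'\,dt ,
\]
where $K(x',y',t)=|\gamma(x')-\psi(y',t)|^{-(n+2s-2)}$ and $\Phi=g\,\chi_k\phi\,J\in C^m_c$, with $J$ the Jacobian. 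Only $x'$-derivatives are needed to prove $C^{m+1}(\partial\Omega)$ regularity.

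\textbf{First $m$ derivatives.} Substitute $y'=x'+h$. Because $\gamma$ and $\psi$ are smooth, $\gamma(x')-\psi(x'+h,t)=A(x',h,t)\,(h,t)$ with $A$ smooth and uniformly injective for small $(h,t)$; outside a small neighborhood the kernel is smooth anyway. Consequently $K=|A(h,t)|^{-(n+2s-2)}$, and every $x'$-derivative at fixed $(h,t)$ obeys $|D^\beta_{x'}K|\le C_\beta|(h,t)|^{-(n+2s-2)}$. This is weakly singular in $\mathbb{R}^n$ because $n+2s-2<n$. In the case $s=0$, $n\ge3$ the exponent is $n-2>0$, so the kernel is still a genuine power; the logarithmic case $n=2$, $s=0$ is precisely what the hypotheses exclude.

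We may now differentiate up to $m$ times under the integral sign by dominated convergence. The derivatives fall either on $K$ or on $\Phi(x'+h,t)$; the latter is legitimate since $\Phi\in C^m$. Changing variables back to $y'$, each $D^\alpha_{x'}F$ with $|\alpha|=m$ becomes a finite sum of terms
\[
\int \widetilde K_\beta(x',y',t)\,t^s\,D^\gamma_{y'}\Phi(y',t)\,dy'\,dt, \qquad \widetilde K_\beta(x',y',t)=(D^\beta_{x'}K)(x',y'-x',t).
\]
Each $\widetilde K_\beta$ is smooth off the diagonal and has the same homogeneity as $K$, and each $D^\gamma_{y'}\Phi$ is merely continuous.

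\textbf{The $(m+1)$-st derivative.} Take one more $x'$-derivative with $y'$ held fixed, so that it lands only on $\widetilde K_\beta$. This yields $|\partial_{x'}\widetilde K_\beta|\le C|\gamma(x')-y|^{-(n+2s-1)}$. For $s\ge\tfrac12$ this alone is not integrable, and this is where the weight enters. Since $x=\gamma(x')\in\partial\Omega$, we have $t\lesssim \mathrm{dist}(y,\partial\Omega)\le|x-y|$, so $t^s\le C|x-y|^s$. The effective kernel is then $O(|x-y|^{-(n+s-1)})$, which is integrable because $s<1$.

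Differentiability and continuity of the resulting integral in $x'$ then follow from the standard argument for weakly singular kernels. One splits the domain into a small ball $|x-y|<\delta$, where the uniform integrable bound controls the contribution, and its complement, where the kernel is smooth in $x'$. This yields $F_s[\phi]|_{\partial\Omega}\in C^{m+1}(\partial\Omega)$.

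\textbf{Main obstacle.} The delicate step is the uniform estimate $|\partial_{x'}\widetilde K_\beta(x',y',t)|\,t^s\le C|x-y|^{-(n+s-1)}$, together with a dominating function uniform in $x'$ near each point, which is needed to justify the final differentiation rigorously. I would obtain it from the factorization $\gamma(x')-\psi(y',t)=A\cdot(y'-x',t)$, using that $\det A$ is bounded below on the collar.
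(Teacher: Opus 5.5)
Your argument is correct, but it reaches the result by a different and leaner mechanism than the paper's proof.

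\textbf{The paper's route.} It stays in the fixed tubular coordinates $r(t)$ and takes one tangential derivative under the integral. To go further, it splits
\[
D^\gamma_\tau r(\tau,0)=\bigl[D^\gamma_\tau r(\tau,0)-D^\gamma_t r(t)\bigr]+D^\gamma_t r(t).
\]
The second piece is a $t_j$-derivative of the kernel and is integrated by parts onto $\varphi$, which produces Barrow boundary terms on the faces $t_j\in\{0,1\}$. The first piece has a weakened singularity through $h_{0,\gamma}$. Iterating this requires the induction over the products $\widetilde h_{\boldsymbol\alpha,\boldsymbol\beta}$ and the index sets $\mathcal{I}_{p,q}(\kappa)$.

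\textbf{Your route.} The substitution $y'=x'+h$ turns the diagonal derivative $D_\tau+D_t$ into an ordinary $x'$-derivative at fixed $(h,t)$. That diagonal derivative is exactly what the paper's add-and-subtract step manufactures; compare \eqref{eq:sum_htilde_partials}. The factorization $\gamma(x')-\psi(x'+h,t)=A(x',h,t)(h,t)$ then shows directly that such derivatives never worsen the singularity. Because the singularity is pinned at $(h,t)=0$, the first $m$ derivatives pass under the integral by plain dominated convergence, with no integration by parts and no combinatorial bookkeeping. Your compactly supported partition of unity also avoids the face terms altogether.

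\textbf{The shared last step.} Both proofs finish identically: one derivative lands on the kernel, and $t^s\le|x-y|^s$ absorbs it into an $O(|x-y|^{-(n-(1-s))})$ kernel; compare \eqref{eq:AP2_inequality_r}. For this last derivative there is no $x'$-uniform integrable majorant, since the singular point moves with $x'$ and the supremum behaves like $t^{-(n+s-1)}$. So the "dominating function uniform in $x'$" mentioned in your last paragraph does not exist. The justification has to be your cut-off/splitting argument, which bounds the near-diagonal contribution uniformly by $C\delta^{1-s}$, or equivalently the paper's Lemma~\ref{lem:Ap_Fubini} with Remark~\ref{rem:continuity}.

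\textbf{What each route buys.} The paper's route gives an explicit representation of $D^{\kappa+\gamma}_\tau\widetilde F$. Yours gives brevity and transparency. It also covers the two-dimensional logarithmic kernel of Lemma~\ref{lem:boundary_reg_V} verbatim, since $x'$-derivatives of $\log|A(x',h,t)(h,t)|$ at fixed $(h,t)$ are bounded.

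\textbf{A small slip.} The excluded pair $n=2$, $s=0$ is not a logarithmic case. There the kernel $|x-y|^{0}\equiv 1$ is constant; the logarithm only appears for the Newtonian potential.
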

\begin{proof}
  The proof is divided into several sections for clarity.
\paragraph{Tubular Neighborhood.} For a given $z \in \partial \Omega$, letting
  $\gamma:[0,1]^{n-1}\to \partial\Omega$ denote a smooth
  parametrization of a neighborhood of $z$ in $\partial \Omega$,
  using~\eqref{eq:normal}, taking a sufficiently small
  $\varepsilon >0$, and using the variable
  $t=(t_1,t_2,\dots,t_n)\in[0,1]^{n-1}\times [0,\varepsilon]$, we
  construct the $C^\infty$ parametrization
  \begin{equation}
    \label{eq:neigh_param}
    r^{z}: [0,1]^{n-1}\times [0,\varepsilon]\to \overline\Omega\quad\mbox{given by}\quad
    r^{z}(t) = \gamma(t_1,\cdots,t_{n-1})-t_n \nu(t_1,\cdots,t_{n-1}),
  \end{equation}
  of a neighborhood of $z$
  in the relative topology of $\overline\Omega$.  In what follows we
  let
  \begin{equation}
    \label{eq:R-V_def}
   R_{\varepsilon} = [0,1]^{n-1} \times [0,\varepsilon], \quad V_{\varepsilon}^z = r^{z}\big(R_{\varepsilon}\big),\quad \mbox{and}\quad S^{z} = r^{z}\big((0,1)^{n-1} \times\{0\}\big),
  \end{equation}
  and we show that $F_s[\phi]\big|_{S^{z}}\in C^{m+1}(S^{z}) $.  The
  set $V_{\varepsilon}^z$ is a
  ``tubular neighborhood'' of $z$ within $\overline \Omega$ and along
  $\partial\Omega$~\cite{folland2020introduction}.  For notational simplicity we drop the subindex
  $z$ in the parametrization just introduced, and we write
  \begin{equation}
    \label{eq:drop}
    r^{z}(t)=r(t)=(r_1(t),r_2(t),\dots,r_n(t)) \quad \mbox{for}\quad t\in R_{\varepsilon}.
  \end{equation}
\begin{figure}
    \centering
    \includegraphics[scale=1]{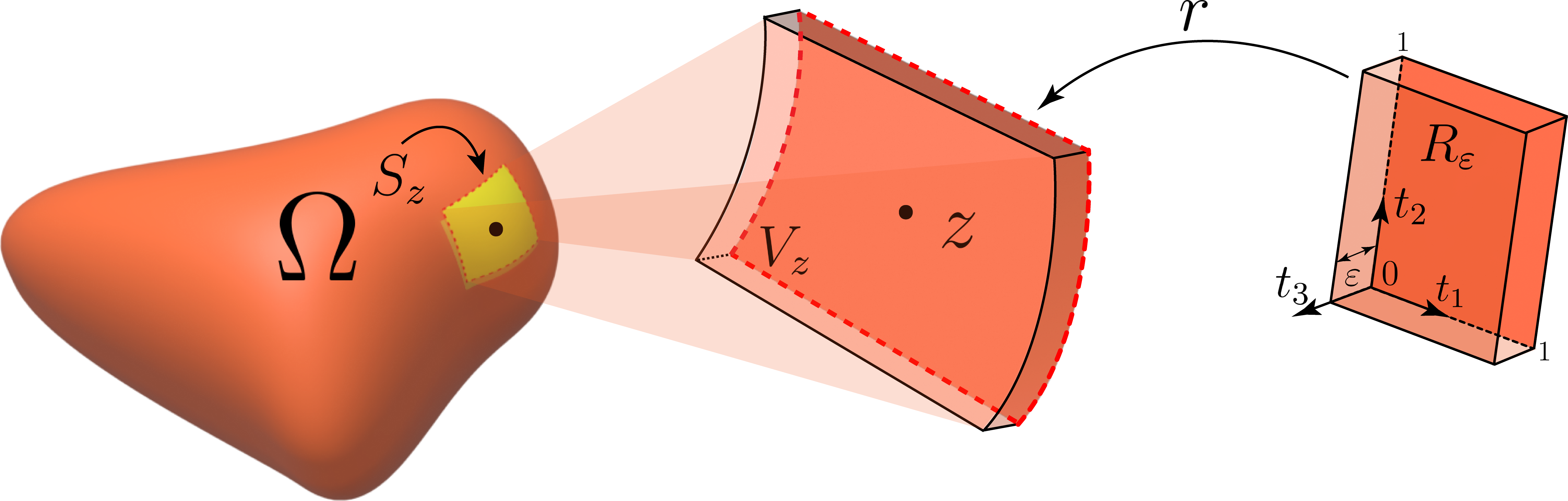}
    \caption{Tubular neighborhood for the domain $\Omega$ in dimension
      $n=3$, around the point $z$, and parametrized over the
      rectangular domain
      $R_\varepsilon = [0,1]^{2}\times [0,\varepsilon]$.\label{fig:tubular}}
\end{figure}
  Letting $J$ denote the Jacobian matrix 
  \begin{equation}\label{eq:AP2_jacb_r}
    J_{k\ell} = \frac{\partial r_k}{\partial y_{\ell}}
  \end{equation}
  of the parametrization $r=r(t)$, and denoting by $|J(t)|$ the
  absolute value of the corresponding Jacobian determinant, the
  integral~\eqref{eq:F_s-op} may be re-expressed in the form
\begin{equation}\label{eq:AP2_Int1}
    F_s[\phi](x) = \int_{\Omega} \frac{d^s(y)\phi(y)}{|x-y|^{n+2s-2}}dy = \int_{R_{\varepsilon}} \frac{d^s(r(t))\phi(r(t))}{|x-r(t)|^{n+2s-2}}|J(t)|dt + \int_{\Omega \setminus V_{\varepsilon}^z} \frac{d^s(y)\phi(y)}{|x-y|^{n+2s-2}}dy. 
\end{equation}
Clearly, the second integral on the right-hand side
of~\eqref{eq:AP2_Int1} is infinitely differentiable at and around $z$
and it therefore suffices to show that the restriction of the first
right-hand integral to $\partial \Omega$ is a $C^{m+1}$ function of
$x$ for $x\in S^z$.

To do this we define the function $\varphi(t) = \phi(r(t)) |J(t)|$,
$\varphi\in C^m(R_{\varepsilon})$, and we note that, taking, as we
may, $\varepsilon$ sufficiently small, in view
of~\eqref{eq:neigh_param} we have $d(r(t)) = t_{n}$ for
$t\in R_{\varepsilon}$. Thus, defining
\begin{equation}
  \label{eq:tau}
  \tau = (\tau_1,\dots,\tau_{n-1}),\quad\mbox{writing}\quad (\tau,0) = (\tau_1,\dots,\tau_{n-1},0)\quad\mbox{when needed},
\end{equation}
and calling $\widetilde F (\tau)$ the first right-hand integral
in~\eqref{eq:AP2_Int1} with $x$ substituted by $r(\tau,0)$, we have
\begin{equation}\label{eq:AP2_Int2}
     \widetilde F(\tau)  = \int_{R_{\varepsilon}} \frac{(t_n)^s\varphi(t)}{|r(\tau,0)-r(t)|^{n+2s-2}}dt, \quad \tau \in (0,1)^{n-1}.
\end{equation} 
Clearly, to complete the proof it suffices to establish the following statement:
\begin{equation}\label{gen_stat}
  \mbox{``If $\varphi\in C^m(R_{\varepsilon})$, then, the integral $\widetilde F$ in~\eqref{eq:AP2_Int2} is an element of
    $C^{m+1}((0,1)^{n-1})$,''}
\end{equation}
This result is proved by induction on $m$, following a sequence of
preparatory steps.
\paragraph{Geometric Inequalities.}
The proof relies on the elementary  inequalities
\begin{equation}\label{eq:rh_bounds}
|r(\tau,0)-r(t)| \ge C_1|(\tau,0)-t| \quad\mbox{and}\quad  | h_{\alpha,\beta}(\tau,t)| \le C_2|(\tau,0)-t|^2,
\end{equation}
which hold for certain constants $C_1,C_2>0$. Here, for
$\alpha , \, \beta \in \mathbb{N}_0^{n-1}$, $t \in R_{\varepsilon}$
and $\tau \in (0,1)^{n-1}$, we have set
\begin{equation}\label{eq:AP2_defn_cq_h2}
   h_{\alpha,\beta}(\tau,t) = (D^{\alpha}_\tau r(\tau,0) - D^{\alpha}_t r(t))\cdot(D^{\beta}_\tau r(\tau,0) - D^{\beta}_t r(t)),
\end{equation}
see~\eqref{eq:AP2_defn_Dalp} and accompanying discussion.  These
inequalities can be established on the basis of the identities
\begin{equation}\label{eq:int_der}
  r(\xi)-r(\eta) = \int_0^1 \frac{d}{d\mu}\left[r(\mu(\xi-\eta)+\eta) \right] d\mu =\left( \int_0^1 J(\mu(\xi-\eta)+\eta)d\mu\right) (\xi-\eta)^T = A(\xi-\eta)^T,
\end{equation}
where $J$ and $A$ denote the Jacobian matrix~\eqref{eq:AP2_jacb_r} and
the integral containing the Jacobian in~\eqref{eq:int_der},
respectively. Indeed, the first bound in~\eqref{eq:rh_bounds} follows
directly from the inequality
$| A(\xi-\eta)^T|\geq |\xi-\eta|/\| A^{-1}\|_2$ that results from the
relation $(\xi-\eta)^T = A^{-1}A(\xi-\eta)^T$, where $\|\cdot \|_2$ denotes
the matrix norm induced by the Euclidean norm in $\mathbb{R}^n$. The
second bound in~\eqref{eq:rh_bounds}, in turn, follows from two
applications of~\eqref{eq:int_der}, one with $r$ substituted by
$D^\alpha r$ and the other with $r$ substituted by $D^\beta r$.
\paragraph{Simple Motivating Discussion.} To motivate the proof
technique, we begin with a brief discussion
establishing~\eqref{gen_stat} for the particular cases $m = 0$ and
$m = 1$. To handle the case $m = 0$ we first note that, for $1 \leq j \leq n-1$
and for fixed values of
$\tau_1 \dots, \tau_{j-1}, \tau_{j+1}, \dots, \tau_{n-1} \in (0,1)$,
the derivative of the integrand in~\eqref{eq:AP2_Int2} with respect to
$\tau_j$, which is given by
\begin{equation}\label{eq:AP2_firstint}
  H(\tau,t)= -(n+2s-2)\frac{(t_n)^s  (r(\tau,0)-r(t)) \cdot \frac{\partial r}{\partial \tau_j}(\tau,0)  }{|r(\tau,0)-r(t)|^{n+2s}}\varphi(t),
\end{equation}
is integrable with respect to $(\tau_j, t)$ in the domain $(0,1)\times R_{\varepsilon}$. Indeed, using the inequalities
\begin{equation}\label{eq:AP2_inequality_r}
\frac{|t_n|^s}{|r(\tau,0)-r(t)|^{n+2s-1}}
\le \frac{|(\tau_1,\dots,\tau_{n-1},0)-t|^s}{|r(\tau_1,\dots,\tau_{n-1},0)-r(t)|^{n+2s-1}}
\le \frac{\text{const.}}{|(\tau_1,\dots,\tau_{n-1},0)-t|^{n-(1-s)}},
\end{equation}
the second of which follows from~\eqref{eq:rh_bounds}, we see that
$H(\tau,t)$ is integrable with respect to $t$ for each fixed $\tau_j$,
and the resulting $t$–integral depends continuously on
$\tau_j$. Hence, $H(\tau,t)$ is integrable with respect to
$(\tau_j, t)$ in the domain $(0,1)\times R_{\varepsilon}$, as claimed.
In view of Lemma~\ref{lem:Ap_Fubini} and Remark~\ref{rem:continuity} we conclude that
\begin{equation}\label{eq:ideap0}
  \frac{\partial}{\partial \tau_j}\widetilde F(\tau) = -(n+2s-2)\int_{R_\varepsilon} \frac{(t_n)^s(r(\tau,0)-r(t))\cdot \frac{\partial}{\partial \tau_j}r(\tau,0)}{|r(\tau,0)-r(t)|^{n+2s}}\varphi(t) dt,
\end{equation}
and the derivative~\eqref{eq:ideap0}
is continuous with respect to $\tau$---that is, the instance $m=0$ of
statement~\eqref{gen_stat} has been verified, as desired.

Unfortunately, we cannot directly apply Lemma~\ref{lem:Ap_Fubini} to
differentiate~\eqref{eq:ideap0} a second time to
establish~\eqref{gen_stat} in the case $m=1$---since the corresponding
second derivative of the integrand is not integrable. To address this
difficulty, we add and subtract $\frac{\partial}{\partial t_j}r(t)$
from $\frac{\partial}{\partial \tau_j}r(\tau,0)$ in the integrand and
thus re-express the resulting integral as a sum of two integrals,
$I_1$ and $I_2$, where
\begin{equation}\label{eq:p0I1_def1}
    I_1 = -(n+2s-2)\int_{R_\varepsilon}  \frac{(t_n)^s (r(\tau,0)-r(t))\cdot \left(\frac{\partial}{\partial \tau_j}r(\tau,0)-\frac{\partial}{\partial t_j}r(t) \right)}{|r(\tau,0)-r(t)|^{n+2s}} \varphi(t) dt
\end{equation}
and
\begin{equation}\label{eq:p0I1_def2}
    I_2 = -(n+2s-2)\int_{R_\varepsilon} \frac{(t_n)^s (r(\tau,0)-r(t))\cdot\frac{\partial}{\partial t_j}r(t) }{|r(\tau,0)-r(t)|^{n+2s}}\varphi(t)  dt.
\end{equation}
It is easy to check that, owing in particular to the factor $(t_n)^s$ in
the numerator, the derivative of the integrand in
$I_1$~\eqref{eq:p0I1_def1} with respect to $\tau_j$
($1\leq j\leq (n-1)$) is integrable, and, thus, by
Lemma~\ref{lem:Ap_Fubini}, this integral is a continuously
differentiable function of $\tau$. Further, the fraction in the
integrand of $I_2$~\eqref{eq:p0I1_def2} equals the derivative with respect to $t_j$ of a
certain quotient, and, thus, using integration by parts, this integral
can be expressed in the form
\begin{equation}\label{eq:p0I2_def}
  I_2 =   \int_{R_\varepsilon} \frac{(t_n)^s }{|r(\tau,0)-r(t)|^{n+2s-2}}\frac{\partial\varphi (t)}{\partial t_j} dt -\int_{R_\varepsilon} \frac{\partial}{\partial t_j}\left(\frac{(t_n)^s \varphi(t)}{|r(\tau,0)-r(t)|^{n+2s-2}}\right) dt,\quad 1\leq j\leq (n-1).
\end{equation}
Clearly, the first integral in~\eqref{eq:p0I2_def} has the same form
as~\eqref{eq:AP2_Int2}, but with $\varphi$ replaced by its derivative
$\frac{\partial \varphi}{\partial t_j}(t)$.  Therefore, as established
for~\eqref{eq:AP2_Int2}, this integral is itself continuously
differentiable. Using Barrow's formula, in turn, the second integral
in~\eqref{eq:p0I2_def} is expressed as the sum of two
$(n-1)$-dimensional integrals over the boundary sets
$R_\varepsilon\cap\{t_j=0\}$ and $R_\varepsilon\cap\{t_j=1\}$ (see
Figure~\ref{fig:tubular}), and it is thus a smooth function of $\tau$
for $\tau\in (0,1)^{n-1}$. This proof thus shows that
$\widetilde F \in C^{2}((0,1)^{n-1})$, as desired.

\paragraph{Additional Notations.} To establish the result in the
general case $m\geq 0$ we first introduce additional
notations. For $q\in \mathbb{N}_{0}$, we set
\begin{equation}\label{eq:AP2_defn_cq_h1}
  c_q = n+2s+2(q-1).
\end{equation}
Further, for $p,q \in \mathbb{N}$ and $\kappa\in \mathbb{N}_0^{n-1}$
satisfying $q \leq p$ and $|\kappa|=p$
(eq.~\eqref{eq:mlt-indx-nrm}), and using the notation
\begin{equation}\label{eq:bold_alphabeta}
  (\boldsymbol{\alpha},\boldsymbol{\beta}) =
  \left(\alpha^{(1)},\dots,\alpha^{(q)},\beta^{(1)},\dots,\beta^{(q)}\right)
  \in \left(\mathbb{N}_0^{n-1}\right)^{2q},
\end{equation}
(with $\alpha^{(\ell)}\in\mathbb{N}_0^{n-1}$,
$\beta^{(\ell)}\in\mathbb{N}_0^{n-1}$ for $1\leq \ell\leq q$), we
define the index set
\begin{equation}
  \mathcal{I}_{p,q}(\kappa) = \left\{ (\boldsymbol{\alpha},\boldsymbol{\beta})  \in \left(\mathbb{N}_0^{n-1}\right)^{2q}: 
    \  \alpha^{(\ell)} \leq \kappa, \ \beta^{(\ell)} \leq \kappa ,\ \  \sum_{\ell=1}^{q}(|\alpha^{(\ell)}|+|\beta^{(\ell)}|) \leq p \right\},\quad q\in \mathbb{N},
\end{equation}
where, for $\alpha, \beta \in \mathbb{N}_0^{n-1}$, $\alpha \le \beta$
denotes the componentwise inequality $\alpha_i \le \beta_i$ for
$i = 1, \dots, n-1$.

\paragraph{Inductive Statement.} Using~\eqref{eq:AP2_Int2} together
with the ``Additional Notations'' introduced above, we
show by induction in $m$ that the following statement holds for all
$m\in\mathbb{N}_0$:

\parbox{0.9\linewidth}{\em ``For $\varphi \in C^m(R_\varepsilon)$ and for each $\kappa\in \mathbb{N}_0^{n-1}$,
    $|\kappa| = m$, and each $\gamma \in \mathbb{N}_0^{n-1}$ with
    $|\gamma|=1$, we have
    $D^{\kappa}_\tau\widetilde{F} \in C^{1}\left((0,1)^{n-1}\right)$,
  and the $\gamma$ derivative
  \begin{equation}
  \label{eq:jder}
  D^\gamma_\tau D^{\kappa}_\tau\widetilde{F},\quad\mbox{where}\quad  D^\gamma_\tau = \frac{\partial}{\partial \tau_j}\quad\mbox{for a certain}\quad j\quad\mbox{with}\quad  1\leq j\leq n-1,
\end{equation}
is given by the expression
\begin{align}
    D^{\kappa+\gamma}_\tau \widetilde{F}(\tau) &= \sum_{q=1}^{m} \sum_{\substack{|\eta| \leq m-q \\ \eta \leq \kappa}} \sum_{(\boldsymbol{\alpha},\boldsymbol{\beta})\in \mathcal{I}_{m,q}(\kappa)} C^{\kappa}_{_{q,\eta,\boldsymbol{\alpha},\boldsymbol{\beta}}}\int_{R_\varepsilon}  (t_n)^s   D^{\gamma}_\tau\left[\frac{ \prod_{k=1}^{q} h_{\alpha^{(k)},\beta^{(k)}}(\tau,t)}{|r(\tau,0)-r(t)|^{c_q}}\right]D^{\eta}_t\varphi(t) dt
    \label{eq:AP2_main_term_alp_gen} \\
    &\quad -c_0\int_{R_\varepsilon}  \frac{(t_n)^s(r(\tau,0)-r(t))\cdot D^{\gamma}_\tau r(\tau,0) }{|r(\tau,0)-r(t)|^{c_1}}D^{\kappa}_t\varphi(t) dt+ S(\tau; \kappa, \gamma), \label{eq:AP2_smooth_term_alp_gen}
\end{align}
wherein all integrands are integrable functions of $t$,
$C^{\kappa}_{q,\eta,\boldsymbol{\alpha},\boldsymbol{\beta}}$
are constants, and $S(\tau; \kappa, \gamma)$ denotes a $C^{\infty}$
function of $\tau$ for $\tau \in (0,1)^{n-1}$.''}

\noindent (For notational simplicity
equation~\eqref{eq:AP2_main_term_alp_gen}--\eqref{eq:AP2_smooth_term_alp_gen}
contains more $\eta$ terms than are strictly necessary; the constants
$C^{\kappa}_{_{q,\eta,\boldsymbol{\alpha},\boldsymbol{\beta}}}$
corresponding to such additional terms are equal to zero.) Clearly,
the proof of the lemma will be complete once this statement is shown
to hold for all $m \in \mathbb{N}_0$.

\paragraph{Proof for $m=0$.} The proof in this case was established as
part of the ``Simple Motivating Discussion'' presented above; note
that for $|\kappa| = m=0$ and $|\gamma|=1$ we have $S(\tau; \kappa, \gamma)=0$.

\paragraph{Preparation for the Inductive Step.} To prepare for the
inductive step we first establish that, for
$\varphi \in C^{m}(R_{\varepsilon})$, the integrands appearing in the
integrals~\eqref{eq:AP2_main_term_alp_gen}–\eqref{eq:AP2_smooth_term_alp_gen}
are integrable. Moreover, for $\varphi \in C^{m+1}(R_{\varepsilon})$,
we introduce alternative representations of these integrals for which
first-order differentiation under the integral sign result in
integrands that remain integrable.

We begin by considering the
integral~\eqref{eq:AP2_smooth_term_alp_gen}. As claimed, the
corresponding integrand is clearly integrable, in view
of~\eqref{eq:AP2_inequality_r}. To obtain the desired alternative
representation for $\varphi \in C^{m+1}(R_{\varepsilon})$ in this
case, we add and subtract $D^{\gamma}_t r(t)$ from
$D^{\gamma}_\tau r(\tau,0)$ in the integrand, thereby rewriting the
integral as the sum of two terms, $I_1$ and $I_2$. Here, $I_1$
contains the difference $h_{0,\gamma}(\tau,t)$
(see~\eqref{eq:AP2_defn_cq_h2}), while $I_2$ involves the term
$D^{\gamma}_t r(t)$:
\begin{equation}\label{eq:I1_I2_def}
    I_1 = -c_0\int_{R_\varepsilon}  \frac{(t_n)^s h_{0,\gamma}(\tau,t)}{|r(\tau,0)-r(t)|^{c_1}} D^{\kappa}_t\varphi(t) dt,\quad I_2 = -c_0\int_{R_\varepsilon}  \frac{(t_n)^s (r(\tau,0)-r(t))\cdot D^{\gamma}_tr(t)}{|r(\tau,0)-r(t)|^{c_1}}D^{\kappa}_t\varphi(t) dt.
\end{equation}
In view of~\eqref{eq:jder} (and~\eqref{eq:AP2_defn_Dalp}), and since
$j\ne n$, the integrand in $I_2$ equals the product of
$D^{\kappa}_t\varphi(t)$ and a derivative with respect to $t_j$ of the
kernel $(t_n)^s / |r(\tau,0)-r(t)|^{c_0}$,
\begin{equation}\label{eq:I2_der_ker}
    I_2 = -\int_{R_\varepsilon}  D^{\gamma}_t\left(\frac{(t_n)^s}{|r(\tau,0)-r(t)|^{c_0}}\right)D^{\kappa}_t\varphi(t) dt.
\end{equation}
Using the product differentiation rule to re-express the integrand
in~\eqref{eq:I2_der_ker} we obtain the desired alternative expression
\begin{equation}\label{eq:AP2_First_partial_eq}
I_1+I_2 =  -c_0\int_{R_\varepsilon} \frac{(t_n)^s  h_{0,\gamma}(\tau,t)D^{\kappa}_t\varphi(t)}{|r(\tau,0)-r(t)|^{c_1}} dt + \int_{R_\varepsilon} \frac{(t_n)^s D^{\kappa+\gamma}_t\varphi(t)}{|r(\tau,0)-r(t)|^{c_0}} dt -\int_{R_\varepsilon} D^{\gamma}_t\left(\frac{(t_n)^s D^{\kappa}_t\varphi(t)}{|r(\tau,0)-r(t)|^{c_0}}\right) dt
\end{equation}
for the integral~\eqref{eq:AP2_smooth_term_alp_gen}. Calling
$S^1(\tau;\kappa, \gamma)$ the third integral in~\eqref{eq:AP2_First_partial_eq}
and using Barrow's rule with respect to $t_j$ to integrate by parts
the derivative $D^{\gamma}_t = \frac{\partial}{\partial t_j}$
(eq.~\eqref{eq:jder}), we conclude that
\begin{equation}
  \label{eq:Sp1}
   \mbox{$S^1(\tau;\kappa, \gamma)$ is a smooth function of $\tau$ for $\tau\in (0,1)^{n-1}$}.
\end{equation}
The other two integrals in~\eqref{eq:AP2_First_partial_eq} are
suitable for differentiation under the integral sign, as shown as part
of the paragraph entitled ``inductive step'' below.

We now turn to the integral in~\eqref{eq:AP2_main_term_alp_gen}, which
we denote by $I = I(\tau)$, and we first show that, for $\varphi \in C^m(R_\varepsilon)$, the corresponding
integrand is integrable. To do this, using~\eqref{eq:bold_alphabeta},  for a
given $q$ ($1\leq q\leq m$) and a
given~$(\boldsymbol{\alpha},\boldsymbol{\beta})\in (\mathbb{N}_0^{n-1})^{2q}$, we define the quantity
\begin{equation}
  \label{eq:htilde_def}
  \widetilde{h}_{\boldsymbol{\alpha},\boldsymbol{\beta}}(\tau,t) =\widetilde{h}^q_{\boldsymbol{\alpha},\boldsymbol{\beta}}(\tau,t) =\prod_{k=1}^{q}
h_{\alpha^{(k)},\beta^{(k)}}(\tau,t);
\end{equation}
for notational simplicity the super-index $q$ in
$\widetilde{h}^q_{\boldsymbol{\alpha},\boldsymbol{\beta}}$ is
suppressed in what follows. Using the product differentiation rule to re-express the
first derivative in the integrand of $I(\tau)$ we then obtain
\begin{equation}
    I(\tau) = \int_{R_\varepsilon}  \left[ \frac{ (t_n)^s D^{\gamma}_\tau \widetilde{h}_{\boldsymbol{\alpha},\boldsymbol{\beta}}(\tau,t)}{|r(\tau,0)-r(t)|^{c_q}} - c_q\frac{ (t_n)^s (r(\tau,0)-r(t))\cdot D^{\gamma}_\tau r(\tau,0) }{|r(\tau,0)-r(t)|^{c_{(q+1)}}}\widetilde{h}_{\boldsymbol{\alpha},\boldsymbol{\beta}}(\tau,t)  \right]D^{\eta}_t \varphi(t)  dt. \label{eq:AP2_der_int11}
\end{equation}
Using once again the product-differentiation rule we see that the
derivative
$D^{\gamma}_\tau\widetilde{h}_{\boldsymbol{\alpha},\boldsymbol{\beta}}(\tau,t)$
is given by the sum of $q$ terms, each one of which equals the product
of $q$ factors, namely, $q-1$ functions of the form
$h_{\alpha^{(k)},\beta^{(k)}}(\tau,t)$ and a derivative factor of the
form $D^\gamma_\tau
h_{\alpha^{(k)},\beta^{(k)}}(\tau,t)$. But~\eqref{eq:AP2_defn_cq_h2}
and the second inequality in~\eqref{eq:rh_bounds} tell us that each
factor $h_{\alpha^{(k)},\beta^{(k)}}(\tau,t)$ (resp.
$D^\gamma_\tau h_{\alpha^{(k)},\beta^{(k)}}(\tau,t)$) is bounded by a
constant times $|(\tau,0)-t|^2$ (resp. a constant times
$|(\tau,0)-t|$). We thus obtain the relations
\begin{equation}\label{eq:htil_bounds2}
    |\widetilde{h}_{\boldsymbol{\alpha},\boldsymbol{\beta}}(\tau,t)| \le \widetilde{C}_1 |(\tau,0)-t|^{2q} \quad \text{ and } \quad  |D^{\gamma}_\tau \widetilde{h}_{\boldsymbol{\alpha},\boldsymbol{\beta}}(\tau,t)| \le \widetilde{C}_2 |(\tau,0)-t|^{2q-1},
\end{equation}
for some constants $\widetilde{C}_1$ and $\widetilde{C}_2$, which we
use in what follows to bound the two terms within square brackets
in~\eqref{eq:AP2_der_int11} by an integrable function. Indeed,
using~\eqref{eq:htil_bounds2} we obtain the estimates,
\begin{equation}
    \left|\frac{ (t_n)^s (r(\tau,0)-r(t))\cdot D^{\gamma}_\tau r(\tau,0) }{|r(\tau,0)-r(t)|^{c_{(q+1)}}}\widetilde{h}_{\boldsymbol{\alpha},\boldsymbol{\beta}}(\tau,t) \right| \le \text{const.}  \cdot \frac{|(\tau,0)-t|^s \cdot |(\tau,0)-t|^{2q+1}}{|(\tau,0)-t|^{n+2s+2q}} = \frac{\text{const.}}{|(\tau,0)-t|^{n-(1-s)}}
\end{equation}
and
\begin{equation}
    \left|\frac{ (t_n)^s D^{\gamma}_\tau\widetilde{h}_{\boldsymbol{\alpha},\boldsymbol{\beta}}(\tau,t)}{|r(\tau,0)-r(t)|^{c_q}} \right| \le \text{const.}  \cdot \frac{|(\tau,0)-t|^s \cdot |(\tau,0)-t|^{2q-1}}{|(\tau,0)-t|^{n+2s+2(q-1)}} = \frac{\text{const.}}{|(\tau,0)-t|^{n-(1-s)}},
\end{equation}
whose right-hand sides are integrable functions of $t$. Together with
the first inequality in~\eqref{eq:rh_bounds}, and since $|\eta|\leq m$
and $\varphi \in C^m(R_\varepsilon)$, these bounds tell us that the
integrand of~\eqref{eq:AP2_main_term_alp_gen} is integrable, as
claimed.

Having shown that the integrand in~\eqref{eq:AP2_main_term_alp_gen} is
integrable (which was accomplished by considering the expression
$I(\tau)$ in~\eqref{eq:AP2_der_int11} of the
integral~\eqref{eq:AP2_main_term_alp_gen}), we now obtain the desired
alternative representation of~\eqref{eq:AP2_main_term_alp_gen}, or,
equivalently, of $I(\tau)$, for
$\varphi \in C^{m+1}(R_{\varepsilon})$. To do we re-express $I(\tau)$
in the form
\begin{equation}\label{eq:AP2_partial_of_main_term_addsub}
    I(\tau) = -c_q\int_{R_\varepsilon} \frac{(t_n)^s  \widetilde{h}_{\boldsymbol{\alpha},\boldsymbol{\beta}}(\tau,t)(r(\tau,0)-r(t))\cdot D^{\gamma}_\tau r(\tau,0)}{|r(\tau,0)-r(t)|^{c_{(q+1)}}}D^{\eta}_t\varphi(t) dt+\int_{R_\varepsilon} \frac{(t_n)^s  D_{\tau}^{\gamma}\widetilde{h}_{\boldsymbol{\alpha},\boldsymbol{\beta}}(\tau,t)}{|r(\tau,0)-r(t)|^{c_{q}}}D^{\eta}_t\varphi(t)dt.
\end{equation} 
Then, adding and subtracting $D^{\gamma}_t r(t)$ from
$D^{\gamma}_\tau r(\tau,0)$ in the first integral
in~\eqref{eq:AP2_partial_of_main_term_addsub}, and adding and
subtracting
$D_{t}^{\gamma}
\widetilde{h}_{\boldsymbol{\alpha},\boldsymbol{\beta}}(\tau,t)$ from
$D_{\tau}^{\gamma}
\widetilde{h}_{\boldsymbol{\alpha},\boldsymbol{\beta}}(\tau,t)$ in the
second integral in~\eqref{eq:AP2_partial_of_main_term_addsub}, we
obtain
\begin{align}
    I(\tau) =&  -c_q \int_{R_\varepsilon} \frac{(t_n)^s  \widetilde{h}_{\boldsymbol{\alpha},\boldsymbol{\beta}}(\tau,t)h_{0,\gamma}(\tau,t)}{|r(\tau,0)-r(t)|^{c_{(q+1)}}}D^{\eta}_t\varphi(t) dt + \int_{R_\varepsilon} \frac{(t_n)^s  (D^{\gamma}_\tau \widetilde{h}_{\boldsymbol{\alpha},\boldsymbol{\beta}} (\tau,t)+D^{\gamma}_t \widetilde{h}_{\boldsymbol{\alpha},\boldsymbol{\beta}} (\tau,t))}{|r(\tau,0)-r(t)|^{c_{q}}}D^{\eta}_t\varphi(t)dt \label{eq:AP2_partial_of_main_term_combineback0}\\
    &-c_q\int_{R_\varepsilon} \frac{(t_n)^s \widetilde{h}_{\boldsymbol{\alpha},\boldsymbol{\beta}}(\tau,t)(r(\tau,0)-r(t))\cdot D^{\gamma}_t r(t)}{|r(\tau,0)-r(t)|^{c_{(q+1)}}}D^{\eta}_t\varphi(t) dt-\int_{R_\varepsilon} \frac{(t_n)^s D^{\gamma}_t \widetilde{h}_{\boldsymbol{\alpha},\boldsymbol{\beta}}(\tau,t) }{|r(\tau,0)-r(t)|^{c_{q}}} D^{\eta}_t\varphi(t)dt. \label{eq:AP2_partial_of_main_term_combineback}
\end{align}
wherein, all integrands are integrable, as it follows by employing
once again the bounds in~\eqref{eq:rh_bounds}.
Note that the subtraction of the term $D^{\gamma}_t r(t)$ from the
first integrand in~\eqref{eq:AP2_partial_of_main_term_addsub} results
in the term $h_{0,\gamma}(\tau,t)$, which weakens the kernel's
singularity and thereby permits a subsequent differentiation under the
integral sign, that is to be performed as part of the inductive step.
Furthermore, as detailed below, the replacement of the term
$D^{\gamma}_\tau
\widetilde{h}_{\boldsymbol{\alpha},\boldsymbol{\beta}}$ by the
subtracted term
$D^{\gamma}_t \widetilde{h}_{\boldsymbol{\alpha},\boldsymbol{\beta}}$
in the second integrand of~\eqref{eq:AP2_partial_of_main_term_addsub}
enables an integration-by-parts step in that integral. The
corresponding added term combines with the original term
$D^{\gamma}_\tau
\widetilde{h}_{\boldsymbol{\alpha},\boldsymbol{\beta}}$ to yield the
sum
$(D^{\gamma}_\tau
\widetilde{h}_{\boldsymbol{\alpha},\boldsymbol{\beta}}+D^{\gamma}_t
\widetilde{h}_{\boldsymbol{\alpha},\boldsymbol{\beta}})$ which
vanishes to the same order as
$\widetilde{h}_{\boldsymbol{\alpha},\boldsymbol{\beta}}$ itself. Indeed, in view
of the identity
\begin{equation}\label{eq:sum_htilde_partials}
  D^{\gamma}_\tau\widetilde{h}_{\boldsymbol{\alpha},\boldsymbol{\beta}}(\tau,t)+D^{\gamma}_t\widetilde{h}_{\boldsymbol{\alpha},\boldsymbol{\beta}}(\tau,t)  =\sum_{i=1}^{q}\left( h_{\alpha^{(i)}+\gamma,\beta^{(i)}}(\tau,t)+ h_{\alpha^{(i)},\beta^{(i)}+\gamma}(\tau,t)\right) \prod_{\substack{k=1\\ k\ne i }}^{q} h_{\alpha^{(k)},\beta^{(k)}}(\tau,t)
\end{equation}
we see that if
$\widetilde{h}_{\boldsymbol{\alpha},\boldsymbol{\beta}}$ contains a
product of functions $h_{\alpha^{(k)},\beta^{(k)}}$ with
$(\boldsymbol{\alpha},\boldsymbol{\beta}) \in I_{m,q}(\kappa)$,
$1 \le q \le m$, then the right-hand side
of~\eqref{eq:sum_htilde_partials} contains products of functions
$h_{\alpha^{(k)},\beta^{(k)}}$ with
$(\boldsymbol{\alpha},\boldsymbol{\beta}) \in
I_{m+1,q}(\kappa+\gamma)$, $1 \le q \le m+1$---which indeed results in
an additionally weakened integrand singularity.

To perform the aforementioned integration-by-parts step we  employ the
product differentiation rule and thereby re-express the two integrals
in~\eqref{eq:AP2_partial_of_main_term_combineback} in the form
\begin{align}
     I(\tau) =  &-c_q \int_{R_\varepsilon} \frac{(t_n)^s h_{0,\gamma}(\tau,t)\widetilde{h}_{\boldsymbol{\alpha},\boldsymbol{\beta}}(\tau,t)}{|r(\tau,0)-r(t)|^{c_{(q+1)}}} D^{\eta}_t\varphi(t) dt + \int_{R_\varepsilon} \frac{(t_n)^s (D_{\tau}^{\gamma} \widetilde{h}_{\boldsymbol{\alpha},\boldsymbol{\beta}}+D^{\gamma}_t \widetilde{h}_{\boldsymbol{\alpha},\boldsymbol{\beta}}) (\tau,t)}{|r(\tau,0)-r(t)|^{c_{q}}} D^{\eta}_t\varphi(t)dt \label{eq:AP2_partial_of_main_term_gen_1} \\
    &+\int_{R_\varepsilon} \frac{(t_n)^s   \widetilde{h}_{\boldsymbol{\alpha},\boldsymbol{\beta}}(\tau,t)}{|r(\tau,0)-r(t)|^{c_{q}}}D^{\eta+\gamma}_t\varphi(t) dt - \int_{R_\varepsilon} D^{\gamma}_t\left( \frac{(t_n)^s   \widetilde{h}_{\boldsymbol{\alpha},\boldsymbol{\beta}}(\tau,t)D^{\eta}_t\varphi(t)}{|r(\tau,0)-r(t)|^{c_{q}}}  \right)dt.\label{eq:AP2_partial_of_main_term_gen_2}
\end{align}
which provides the desired alternative expression for the integral
in~\eqref{eq:AP2_main_term_alp_gen}.  Using Barrow's rule, and in
direct analogy with~\eqref{eq:Sp1}, we obtain
\begin{equation}
  \label{eq:Sp2}
  S^2(\tau;\eta, \gamma, q, \boldsymbol{\alpha},\boldsymbol{\beta}) =  \int_{R_\varepsilon} D^{\gamma}_t\left( \frac{(t_n)^s  \widetilde{h}_{\boldsymbol{\alpha},\boldsymbol{\beta}}(\tau,t) D^{\eta}_t\varphi(t)}{|r(\tau,0)-r(t)|^{c_{q}}}  \right) dt\quad\mbox{is a smooth function of $\tau$ for $\tau\in (0,1)^{n-1}$}.
\end{equation}
Therefore, combining equation~\eqref{eq:AP2_First_partial_eq}--\eqref{eq:Sp1} and
\eqref{eq:AP2_partial_of_main_term_gen_1}--\eqref{eq:Sp2}, and exploiting 
the observation following equation~\eqref{eq:sum_htilde_partials}, we
can re-express~\eqref{eq:AP2_main_term_alp_gen}--\eqref{eq:AP2_smooth_term_alp_gen}
in the form
\begin{align}
     D^{\kappa+\gamma}_\tau \widetilde{F}(\tau) &= \sum_{q=1}^{m+1} \sum_{\substack{|\eta| \leq m+1-q \\ \eta \leq \kappa+\gamma}} \sum_{(\boldsymbol{\alpha},\boldsymbol{\beta})\in \mathcal{I}_{m+1,q}(\kappa+\gamma)} C^{\kappa+\gamma}_{_{q,\eta,\boldsymbol{\alpha},\boldsymbol{\beta}}}\int_{R_\varepsilon} (t_n)^s  \frac{ \widetilde{h}_{\boldsymbol{\alpha},\boldsymbol{\beta}}(\tau,t)}{|r(\tau,0)-r(t)|^{c_q}}  D^{\eta}_t\varphi(t)dt
    \label{eq:AP2_main_term_alp_gen_rexpressed1} \\
    &\quad  +\int_{R_\varepsilon} \frac{(t_n)^s}{|r(\tau,0)-r(t)|^{c_0}} D^{\kappa+\gamma}_t\varphi(t) dt+ \widetilde S(\tau; \kappa,\gamma). \label{eq:AP2_main_term_alp_gen_rexpressed2}
\end{align}
Note that, in particular, the constants
$C^{\kappa+\gamma}_{_{q,\eta,\boldsymbol{\alpha},\boldsymbol{\beta}}}$
absorb the coefficients
$C^{\kappa}_{q,\gamma,\boldsymbol{\alpha},\boldsymbol{\beta}}$
in~\eqref{eq:AP2_main_term_alp_gen}, and
\begin{equation} \label{eq:AP2_main_smooth_term_tilde}
    \widetilde S(\tau; \kappa,\gamma) =S(\tau; \kappa,\gamma)- S^1(\tau;\kappa, \gamma) -\sum_{q=1}^{m} \sum_{\substack{|\eta| \leq m-q \\ \eta \leq \kappa}} \sum_{(\boldsymbol{\alpha},\boldsymbol{\beta})\in \mathcal{I}_{m,q}(\kappa)} C^{\kappa}_{_{q,\eta,\boldsymbol{\alpha},\boldsymbol{\beta}}} S^2(\tau;\eta, \gamma, q,\boldsymbol{\alpha},\boldsymbol{\beta}) .
\end{equation}
is a smooth function of $\tau$ in the
interior of $R_\varepsilon$.

\paragraph{Inductive Step.}

Assuming the inductive statement holds for a given integer
$m = p \in \mathbb{N}_0$, we prove that it also holds for $m = p+1$.
Specifically, taking $\varphi \in C^{p+1}(R_\varepsilon)$, assume that the
equation~\eqref{eq:AP2_main_term_alp_gen}–\eqref{eq:AP2_smooth_term_alp_gen}
is valid for all multi-indices $\kappa$ and $\gamma$ satisfying
$|\kappa| = p$ and $|\gamma| = 1$. We then show that the same relations
hold for every $\gamma$ with $|\gamma| = 1$ and every
$\widetilde{\kappa} = \kappa + \delta \in \mathbb{N}_0^{n-1}$, where
\begin{equation}
  \label{eq:delta}
 \mbox{$\kappa$, $\delta\in \mathbb{N}_0^{n-1}$ satisfy $|\kappa| = p$ and $|\delta|=1$}
\end{equation}
(note that, in particular, $|\widetilde{\kappa}| = p+1$).  To this
end, since $\varphi \in C^{p+1}(R_\varepsilon)$, we may invoke the
alternative
representation~\eqref{eq:AP2_main_term_alp_gen_rexpressed1}--\eqref{eq:AP2_main_term_alp_gen_rexpressed2}
with $\gamma$ replaced by $\delta$. This representation---like
\eqref{eq:AP2_main_term_alp_gen}–\eqref{eq:AP2_smooth_term_alp_gen}
itself---is valid by the inductive hypothesis, and it enables us to
evaluate the $D^\gamma$ derivative of
$D^{\kappa+\delta}_\tau \widetilde{F}(\tau)$.

Substituting $\gamma$ by $\delta$ and $m$ by $p$ in~\eqref{eq:AP2_main_term_alp_gen_rexpressed1}--\eqref{eq:AP2_main_term_alp_gen_rexpressed2} we obtain
\begin{align}
     D^{\widetilde{\kappa}}_\tau \widetilde{F}(\tau) = D^{\kappa+\delta}_\tau \widetilde{F}(\tau) = \sum_{q=1}^{p+1} \sum_{\substack{|\eta| \leq p+1-q \\ \eta \leq \kappa+\delta}} &\sum_{(\boldsymbol{\alpha},\boldsymbol{\beta})\in \mathcal{I}_{p+1,q}(\kappa+\delta)} C^{\widetilde{\kappa}}_{_{q,\eta,\boldsymbol{\alpha},\boldsymbol{\beta}}}\int_{R_\varepsilon} (t_n)^s   \frac{ \widetilde{h}_{\boldsymbol{\alpha},\boldsymbol{\beta}}(\tau,t)}{|r(\tau,0)-r(t)|^{c_q}}D^{\eta}_t\varphi(t) dt
    \label{eq:AP2_main_term_alp_gen_rexpressed1-delta} \\
    &  +\int_{R_\varepsilon} \frac{(t_n)^s}{|r(\tau,0)-r(t)|^{c_0}} D^{\kappa+\delta}_t\varphi(t) dt+ \widetilde S(\tau; \kappa,\delta). \label{eq:AP2_main_term_alp_gen_rexpressed2-delta}
\end{align}
The integral in~\eqref{eq:AP2_main_term_alp_gen_rexpressed2-delta}
coincides with the expression that is obtained by replacing $\varphi$
by $D^{\kappa+\delta}\varphi$ in~\eqref{eq:AP2_Int2}. Hence, by the
same argument used to justify~\eqref{eq:ideap0} we see that the
integral in~\eqref{eq:AP2_main_term_alp_gen_rexpressed2-delta} is a
$C^1$ function of $\tau$ and its $D^{\delta}$ derivative can be
expressed in the form
\begin{equation}\label{eq:AP2_der_again}
   -c_0\int_{R_\varepsilon} \frac{(t_n)^s (r(\tau,0)-r(t))\cdot D^{\gamma}_\tau r(\tau,0)}{|r(\tau,0)-r(t)|^{c_1}} D^{\kappa+\delta}_t \varphi(t)dt.
 \end{equation}
 The differentiability of the integral
 in~\eqref{eq:AP2_main_term_alp_gen_rexpressed1-delta} with respect to
 $\tau$, together with the continuity of the corresponding
 $D^{\gamma}_\tau$ derivative and the validity of the relation
\begin{equation}\label{eq:AP2_der_int_change1}
    D^{\gamma}_\tau \left[\int_{R_\varepsilon}   \frac{ (t_n)^s \widetilde{h}_{\boldsymbol{\alpha},\boldsymbol{\beta}}(\tau,t)}{|r(\tau,0)-r(t)|^{c_q}}D^{\eta}_t \varphi(t)  dt\right] = \int_{R_\varepsilon}  D^{\gamma}_\tau \left[\frac{ (t_n)^s \widetilde{h}_{\boldsymbol{\alpha},\boldsymbol{\beta}}(\tau,t)}{|r(\tau,0)-r(t)|^{c_q}} \right] D^{\eta}_t\varphi(t) dt,
  \end{equation}
  in turn, follow from the integrability of the right-hand integrand
  in~\eqref{eq:AP2_der_int_change1} in view of
  Lemma~\ref{lem:Ap_Fubini} and Remark~\ref{rem:continuity}. The
  integrability of this integrand was established in the
  ``Preparation'' section above---since this integrand
  in~\eqref{eq:AP2_main_term_alp_gen} coincides with the integrand on
  the right-hand side of~\eqref{eq:AP2_der_int_change1}. Thus,
  $D^{\widetilde\kappa}_\tau \widetilde{F}\in C^1((0,1)^{n-1})$ and
  its $D^{\gamma}_\tau$ derivative is given by the expression
\begin{align}
     D^{\widetilde\kappa+\gamma}_\tau \widetilde{F}(\tau) &= \sum_{q=1}^{p+1} \sum_{\substack{|\eta| \leq p+1-q \\ \eta \leq \widetilde\kappa}} \sum_{(\boldsymbol{\alpha},\boldsymbol{\beta})\in \mathcal{I}_{p+1,q}(\widetilde\kappa)} C^{\widetilde\kappa}_{_{q,\eta,\boldsymbol{\alpha},\boldsymbol{\beta}}}\int_{R_\varepsilon} (t_n)^s D^{\gamma}_\tau \left[\frac{ \widetilde{h}_{\boldsymbol{\alpha},\boldsymbol{\beta}}(\tau,t)}{|r(\tau,0)-r(t)|^{c_q}}\right] D^{\eta}_t\varphi(t) dt\\
    &\quad  -c_0\int_{R_\varepsilon} \frac{(t_n)^s(r(\tau,0)-r(t))\cdot D^{\gamma}_\tau r(\tau,0) }{|r(\tau,0)-r(t)|^{c_1}} D^{\widetilde\kappa}_t \varphi(t) dt + S(\tau; \widetilde \kappa,\gamma),
\end{align}
wherein all integrands are integrable functions of $t$,
$C^{\widetilde\kappa}_{_{q,\eta,\boldsymbol{\alpha},\boldsymbol{\beta}}}$ are
constants, and
$ S(\tau; \widetilde \kappa,\gamma) = D^{\gamma}_\tau\widetilde S(\tau; \kappa, \delta)$ is a
$C^{\infty}$ function in $(0,1)^{n-1}$. In view
of~\eqref{eq:htilde_def} this completes the inductive step, and thus
concludes the proof of the lemma.
\end{proof}

\begin{lem}\label{lem:boundary_reg_V}
  Let $m$ denote a non-negative integer and let
  $f \in C^m(\overline{\Omega})$, where $\Omega\subset \mathbb{R}^n$
  ($n\geq 2$) is an open and bounded domain with an infinitely
  differentiable boundary. Then the restriction
  $\left . V[f]\right|_{\partial\Omega}$ of the Newtonian potential
  $V[f]$ in~\eqref{single_layer_vol} to $\partial\Omega$ satisfies
  $\left . V[f]\right|_{\partial\Omega}\in C^{m+1}(\partial \Omega)$.
\end{lem}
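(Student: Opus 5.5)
The natural route is to reduce Lemma~\ref{lem:boundary_reg_V} to the machinery of Lemma~\ref{lem:boundary_reg_Fs}. For $n\geq 3$ this is immediate. Setting $s=0$ in~\eqref{eq:F_s-op} gives $d^0\equiv 1$ and kernel $|x-y|^{2-n}$, so
\[
V[f] = \frac{1}{\omega_n(2-n)}F_0[f].
\]
Lemma~\ref{lem:boundary_reg_Fs} explicitly covers $s=0$ when $n\geq 3$. With $\phi=f\in C^m(\overline\Omega)$ it yields $F_0[f]|_{\partial\Omega}\in C^{m+1}(\partial\Omega)$, and hence the claim for $V[f]$.

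The case $n=2$ needs a separate argument, since $N(x,y)=\frac{1}{2\pi}\log|x-y|$ is not of the form covered by Lemma~\ref{lem:boundary_reg_Fs}. I would rerun that proof with the logarithmic kernel. Fix $z\in\partial\Omega$ and localize with the tubular parametrization $r(t)$ of~\eqref{eq:neigh_param}. The part of the integral over $\Omega\setminus V^z_\varepsilon$ is smooth near $z$, so it suffices to study
\[
\widetilde V(\tau)=\int_{R_\varepsilon}\log|r(\tau,0)-r(t)|\,\varphi(t)\,dt,\qquad \varphi(t)=f(r(t))|J(t)|\in C^m(R_\varepsilon).
\]
The $\tau$-derivative of this kernel is $(r(\tau,0)-r(t))\cdot\partial_\tau r(\tau,0)/|r(\tau,0)-r(t)|^2$. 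This is exactly the $s=0$, $n=2$ analogue of~\eqref{eq:AP2_firstint}, with $c_0$ replaced by the formal value $0$ but with the same structure and $c_1=2$. From that point on, every expression in the induction involves only the kernels $\widetilde h_{\boldsymbol\alpha,\boldsymbol\beta}/|r(\tau,0)-r(t)|^{c_q}$ with $c_q=2q$ for $q\geq1$. The induction of Lemma~\ref{lem:boundary_reg_Fs} therefore goes through verbatim, using several ingredients from that proof:
\begin{itemize}
\item adding and subtracting $D^\gamma_t r(t)$ to produce $h_{0,\gamma}$;
\item the identity $(r(\tau,0)-r(t))\cdot D^\gamma_t r(t)/|r(\tau,0)-r(t)|^2=-D^\gamma_t\log|r(\tau,0)-r(t)|$, which permits integration by parts in $t_j$;
\item the identity~\eqref{eq:sum_htilde_partials};
\item Barrow's rule, which produces the smooth boundary remainders $S^1,S^2$;
\item the bounds~\eqref{eq:rh_bounds}.
\end{itemize}

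The step I expect to be the main obstacle is integrability, because the factor $(t_n)^s$ used in Lemma~\ref{lem:boundary_reg_Fs} is now absent. There, each integrand obtained by differentiating once more was bounded by $|(\tau,0)-t|^{-(n-(1-s))}$. With $s=0$ this becomes $|(\tau,0)-t|^{-(n-1)}$, which is still integrable over the $n$-dimensional region $R_\varepsilon$. So the bounds survive with $s=0$. This is precisely why Lemma~\ref{lem:boundary_reg_Fs} allows $s=0$, and the same count works for $n=2$ with the logarithm.

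I would check two points carefully in this case. First, the first derivative of $\log$ is bounded by $|(\tau,0)-t|^{-1}$, which is integrable in $\mathbb{R}^2$, so the base case follows from Lemma~\ref{lem:Ap_Fubini} and Remark~\ref{rem:continuity}. Second, all later kernels carry $2q$ powers in the denominator balanced by $\widetilde h$ factors of order $2q$ (or $2q-1$ after one derivative), so they are $O(|(\tau,0)-t|^{-1})$. With these points verified, the inductive statement holds with $c_0$-terms replaced by logarithmic ones, and it gives $\widetilde V\in C^{m+1}((0,1))$. This yields $V[f]|_{\partial\Omega}\in C^{m+1}(\partial\Omega)$ for $n=2$ as well.
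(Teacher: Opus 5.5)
Your proposal is correct and follows the paper's proof: for $n\ge 3$ you identify $V[f]$ with a multiple of $F_0[f]$ and invoke Lemma~\ref{lem:boundary_reg_Fs} at $s=0$, and for $n=2$ you rerun that lemma's induction with $\log|r(\tau,0)-r(t)|$ in place of the $|r(\tau,0)-r(t)|^{-c_0}$ kernel, keeping all other kernels (now with $c_q=2q$) unchanged. The only slip is cosmetic: for the logarithmic kernel the prefactor $-c_0$ in~\eqref{eq:AP2_firstint} and~\eqref{eq:I1_I2_def} becomes $+1$, not a ``formal value $0$''; your displayed derivative and integration-by-parts identity already reflect this.
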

\begin{proof}

  For $n \geq 3$ and $s = 0$, the operator $F_s[\phi]$
  in~\eqref{eq:F_s-op} reduces to the Newtonian potential $V[f]$,
\begin{equation}
F_0[f](x) = \int_{\Omega} \frac{f(y)}{|x - y|^{n-2}} dy = \omega_n (2 - n) V[f](x), \quad x \in \overline{\Omega}.
\end{equation}
Thus, for $n\geq 3$ the present lemma follows directly from
Lemma~\ref{lem:boundary_reg_Fs}. For $n = 2$, the $s = 0$ version of
$F_s$ does not coincide with the corresponding 2D Poisson
potential. However the proof in Lemma~\ref{lem:boundary_reg_Fs}
remains valid if the the singular kernel in~\eqref{eq:F_s-op} (with
$s=0$) is substituted by the 2D Poisson kernel
$\frac{1}{2\pi}\log|x-y|$. To see this we first note that upon such a
substitution (taking into account that $c_0 = n + 2s - 2$), the terms
containing
\begin{equation}
\frac{1}{|x - y|^{n + 2s - 2}} \quad \text{and} \quad \frac{1}{|r(\tau, 0) - r(t)|^{n + 2s - 2}}
\end{equation}
in equations~\eqref{eq:AP2_Int1},~\eqref{eq:AP2_Int2},~\eqref{eq:I2_der_ker},~\eqref{eq:AP2_First_partial_eq},~\eqref{eq:Sp1},~\eqref{eq:AP2_main_term_alp_gen_rexpressed2}, and~\eqref{eq:AP2_main_smooth_term_tilde}  (eq.~\eqref{eq:AP2_defn_cq_h1})  are replaced by
\begin{equation}
\log |x - y| \quad \text{and} \quad \log |r(\tau, 0) - r(t)|,
\end{equation}
respectively, while all kernels containing other powers of remain
unchanged. With these modifications, the present lemma follows from
the argument in Lemma~\ref{lem:boundary_reg_Fs}  without
further modifications.
\end{proof}

\begin{remark}\label{rem:Gs}
  As mentioned above, the proof of Lemma~\ref{lem:boundary_reg_V}
  essentially coincides with the particular case $s=0$ of
  Lemma~\ref{lem:boundary_reg_Fs}: the two statements coincide exactly
  for $s=0$ and $n\geq 3$, while, for $n=2$, the latter result is
  established by an argument entirely analogous to that of the
  former. But, in fact, Lemma~\ref{lem:boundary_reg_V} can be viewed
  as the particular case $s=0$ of Lemma~\ref{lem:boundary_reg_Fs} even
  in the case $n=2$. To see this we replace the kernel
  in~\eqref{eq:F_s-op} by the kernel $(|x-y|^{-2s}-1)/(-2s)$ which
  results in the operator
  \begin{equation}
    \label{eq:tildeF_log}
    G_s = -(F_s-A_0)/(2s),
  \end{equation}
  where $A_0 = A_0[\phi]$ is a linear integral functional: $A_0[\phi]$
  is independent of $x$. It is easy to check that
  \begin{equation}
    \label{eq:log-limit}
    \lim_{s\to 0^+}G_s[\phi] = 2\pi V[\phi], 
  \end{equation}
  and that the limit is uniform over $\overline\Omega$. In view
  of~\cite[Thm.~7.17]{rudin1976}, the smoothness of the restriction of
  $V[\phi]$ to $\partial \Omega$ follows from the uniform convergence
  of the tangential derivatives of the boundary values of $G_s[\phi]$,
  which can be established by arguments analogous to those employed in
  the ``Preparation for the Inductive Step'' portion of the proof of
  Lemma~\ref{lem:boundary_reg_Fs}. Thus, the case $s=0$ with $n=2$
  also is, in essence, the particular case $s=0$ of
  Lemma~\ref{lem:boundary_reg_Fs}.
\end{remark}

\begin{remark}\label{rem:Vf_reg}
  Lemma~\ref{lem:boundary_reg_V} establishes the boundary-regularity
  result $V[f] \in C^{m+1}(\partial\Omega)$ for the Poisson potential
  $V[f]$. In fact, full-domain regularity
  $V[f] \in C^{m+1}(\overline\Omega)$ can also be obtained for
  $f\in C^m(\overline\Omega)$, using an argument similar to the one
  employed in Lemma~\ref{lem:boundary_reg_Fs} but in the case $s=0$
  for which the term $d^s(y) = 1$ is not singular.  It is relevant to
  contrast such a result with others previously available in the
  literature.  For example~\cite[Thm.~2.28]{folland2020introduction}
  establishes that, for $0<\alpha< 1$, we have
  $V[f]\in C^{k+2+\alpha}(\Omega)$ provided
  $f\in C^{k+\alpha}(\Omega)$. However, this is an interior-regularity
  result: it does not establish regularity up to the boundary. In
  fact, we are not aware of any existing results concerning regularity
  up to or on the boundary, either for potentials related to the
  Riesz-like operator~\eqref{eq:F_s-op} or for the Poisson potential
  $V[f]$.
\end{remark}


We now proceed to the proof of the main results of this paper, Theorems~\ref{thm:FL_into_Lap} through~\ref{thm:new_form_1D}.

\begin{center}{\bf Proof of Theorem~\ref{thm:FL_into_Lap}}
\end{center}

\noindent
For $x \in \mathbb{R}^n \setminus \Omega$, the result follows by
applying integration by parts to the integral over the Lipschitz
domain $\Omega$~\cite[Thm.~3.34]{mclean2000} on the right-hand side of
\eqref{eq:FL_op_grad}, and then interchanging differentiation and
integration so that the gradient acts outside the integral. We
therefore focus on the more delicate case $x \in \Omega$. Using the
dominated convergence theorem, integration by parts and
Lemma~\ref{lem:und_int}, the integral on the right side of
\eqref{eq:FL_op_grad} can be manipulated as follows:
\begin{align}
&\int_{\Omega} |x-y|^{-n-2s+2}\nabla u(y)dy = \lim_{\varepsilon \to 0}\int_{\Omega \setminus B_{\varepsilon}(x)}|x-y|^{-n-2s+2}\nabla u(y)dy \\
&=  \lim_{\varepsilon \to 0}\left[\int_{\Omega \setminus B_{\varepsilon}(x)}u(y)\nabla_{x}|x-y|^{-n-2s+2}dy  + \int_{\partial B_{\varepsilon}(x)}u(y)|x-y|^{-n-2s+2}\frac{x-y}{\varepsilon}dS_y\right]\\
  &=  P.V\int_{\Omega}u(y)\nabla_{x}|x-y|^{-n-2s+2}dy  + \lim_{\varepsilon \to 0}\int_{\partial B_{\varepsilon}(x)}u(y)|x-y|^{-n-2s+2}\frac{x-y}{\varepsilon}dS_y\\
&=\nabla_{x}  \int_{\Omega}u(y)|x-y|^{-n-2s+2}dy  - \lim_{\varepsilon \to 0}\int_{\partial B_{\varepsilon}(0)}u(x+y)\frac{y}{\varepsilon^{n+2s-1}}dS_y. 
\label{eq:grad_kernel}
\end{align} 
Using equations~\eqref{spher_1} and~\eqref{spher_elems} the last
integral in~\eqref{eq:grad_kernel} becomes
  \begin{align}
    &\int_{\partial B_{\varepsilon}(0)}u(x+y)\frac{y}{\varepsilon^{n+2s-1}}dS_y =  \frac{1}{\varepsilon^{
      2s-1}}\int_{[0,\pi]^{n-2} \times [0,2\pi]}  u(x+\varepsilon \xi(\phi,\theta))\xi(\phi,\theta)F(\phi)d\phi d\theta \label{eq:int_taylor_exp_u0} \\
    &= \frac{1}{\varepsilon^{
      2s-1}}\left(\!\int_{[0,\pi]^{n-2} \times [0,\pi]} +\int_{ [0,\pi]^{n-2} \times [\pi,2\pi]}\!\right) u(x+\varepsilon \xi(\phi,\theta))\xi(\phi,\theta)F(\phi)d\phi d\theta . \label{eq:int_taylor_exp_u}
\end{align}
Introducing the change of variables $\theta \mapsto \theta+\pi$ and
$\phi_j \mapsto \pi-\phi_j$ for each $j=1,\cdots,n-2$ in the second
integral in~\eqref{eq:int_taylor_exp_u} and using Taylor's theorem
$u(x\pm h) = u(x) \pm \nabla u(\mu^\pm) \cdot h$ (where $\mu^\pm_i$
lies between $x_i$ and $x_i\pm h_i$), the
quantity~\eqref{eq:int_taylor_exp_u} may be re-expressed in the form
\begin{align}
     &\varepsilon^{1-2s}\int_{[0,\pi]^{n-2} \times [0,\pi]} (u(x+\varepsilon \xi(\phi,\theta)) - u(x-\varepsilon \xi(\phi,\theta)))\xi(\phi,\theta) F(\phi)d\phi d\theta  \\
     &=\varepsilon^{2(1-s)} \int_{ [0,\pi]^{n-2} \times [0,\pi]} \! \! \left( \nabla u(\mu^+)\cdot \xi(\phi,\theta)+\nabla u(\mu^-)\cdot \xi(\phi,\theta)\right)\xi(\phi,\theta) F(\phi) d\phi d\theta,  \label{eq:bd_int_z}
\end{align}
which tells us that the second term in~\eqref{eq:grad_kernel}
vanishes. Thus,
\begin{equation}\label{eq:grad_u_out}
  \int_{\Omega} |x-y|^{-n-2s+2}\nabla u(y)dy = \nabla_{x}  \int_{\Omega}u(y)|x-y|^{-n-2s+2}dy.
\end{equation}
Using Lemma~\ref{lem:FL_op_grad} and taking divergence of~\eqref{eq:grad_u_out} we obtain
\begin{equation}
(-\Delta)^{s} u(x)=C_{n,s} \nabla_{x} \cdot \nabla_{x}  \int_{\Omega}u(y)|x-y|^{-n-2s+2}dy = C_{n,s}\Delta_{x}\int_{\Omega} \frac{u(y)}{|x-y|^{n+2s-2}} dy,
\end{equation}
as desired. The proof is complete.\hfill\qedsymbol

\begin{center}{\bf Proof of Theorem~\ref{thm:new_form_nD}}
\end{center}

\noindent
To show that equations~\eqref{eq:reformul1}--\eqref{eq:reformul2}
admit a solution satisfying~\eqref{eq:uniq_triple} we consider the
solution $u$ of~\eqref{frac_lapl} corresponding to the given function
$f \in C^{m+0}(\overline{\Omega})$, $m \geq 2$. To employ the
relation~\eqref{eq:frac-from-clas} we first verify that $u$ satisfies
the hypotheses of Theorem~\ref{thm:FL_into_Lap}. Indeed, by
Theorem~\ref{thm:fu_rel}, the regularity condition
$f \in C^{2+0}(\overline{\Omega})$ ensures that $u$ admits the
representation $u(x) = d^s(x)\phi(x)$, as in
equation~\eqref{eq:FL-sol-ws}, with
$\phi \in C^{2}(\overline{\Omega})$, and, thus, in particular,
$u \in C^{2}(\Omega) \cap C(\mathbb{R}^n)$.  Noting that $d^{s-1}$ is
an integrable function in $\overline{\Omega}$, it follows that each
first-order partial derivative of $u$ is integrable, and, thus, $u$
satisfies all the hypotheses required by
Theorem~\ref{thm:FL_into_Lap}, as desired.

We may thus use equation~\eqref{eq:frac-from-clas}, which, in view
of~\eqref{eq:F_s-op}, tells us that $v =C_{n,s} F_{s}[\phi]$ is a
solution of the problem~\eqref{eq:poisson} with $g_1 = f$ and
$g_2 = C_{n,s}F_{s}[\phi]\big |_{\partial\Omega}$. It follows that
$\widetilde v = v -V[f]$ (eq.~\eqref{eq:v-poten}) solves the Laplace
problem~\eqref{eq:laplace} with
$h= C_{n,s}F_{s}[\phi]\big |_{\partial\Omega} -V[f]\big
|_{\partial\Omega}$.  But, since
$f\in C^{m+0}(\overline{\Omega})\subset C^m(\overline{\Omega})$, in
view of Lemma~\ref{lem:boundary_reg_V} we have
$\left . V[f]\right|_{\partial\Omega}\in C^{m+1}(\partial
\Omega)$. Further, since the function $\phi$ provided by
Theorem~\ref{thm:fu_rel} satisfies
$\phi \in C^{m+s-0}(\overline{\Omega}) \cap C^{m+2s-0}(\Omega)$, and
therefore $\phi \in C^m(\overline{\Omega})$,
Lemma~\ref{lem:boundary_reg_Fs} is applicable and it tells us that
$F_s[\phi] \in C^{m+1}(\partial \Omega)$---and, therefore
$h \in C^{m+1}(\partial \Omega)$.  We have thus shown that
$\widetilde v$ satisfies~\eqref{eq:laplace} with
$h \in C^{m+1}(\partial \Omega)$. Consequently, using the solutions
$\beta_j$ ($1 \le j \le n_h$) of~\eqref{eq:betaj_def}, per
Section~\ref{prelim} we see that $\widetilde v$ admits the
representation~\eqref{eq:LP_to_Inteq}, where
$(\zeta, a) \in C(\partial \Omega) \times \mathbb{C}^{n_h}$ is a
solution of~\eqref{eq:inteq_lap}. In other words, as claimed,
$(\phi, \zeta, a)$ satisfies
equations~\eqref{eq:reformul1}--\eqref{eq:reformul2}.

To establish~\eqref{eq:uniq_triple} it remains for us to show
that $\zeta\in C^{m+1-0}(\partial \Omega)$. To do this we first
re-express equation~\eqref{eq:inteq_lap} in the form
\begin{equation}\label{eq:DLP_inverse}
\frac{1}{2}\zeta(x) =  h(x) - D[\zeta](x) - \sum_{j=1}^{n_h} a_j S[\beta_j](x), \quad x \in \partial\Omega,
\end{equation}
and we note that, as part of the proof of~\cite[Theorem
3.5]{kress1999} it is established that for any $0 < \delta < 1$, the
operator $D$ maps $C(\partial\Omega)$ to
$C^{0,\delta}(\partial\Omega)$ (continuously). In particular, in view
of~\eqref{eq:betaj_def}, it follows that the right-hand side
in~\eqref{eq:DLP_inverse} is an element of
$C^{0,\delta}(\partial\Omega)$ and therefore,
$\zeta \in C^{0,\delta}(\partial\Omega)$. Furthermore, since $D$ maps
$C^{\ell,\delta}(\partial\Omega)$ to
$C^{\ell+1,\delta}(\partial\Omega)$ (continuously) for all integers
$\ell \geq 0$~\cite[Theorem 7.1]{cristoforis2024}, and since
$h \in C^{m+1}(\partial \Omega)$, using~\eqref{eq:DLP_inverse} in a
bootstrapping argument shows that
$\zeta \in C^{m,\delta}(\partial\Omega)$ for all $0 < \delta < 1$. It
follows that equations~\eqref{eq:reformul1}–\eqref{eq:reformul2} admit
a solution $(\phi, \zeta, a)$ with
$\phi \in C^{m+s-0}(\overline{\Omega}) \cap C^{m+2s-0}(\Omega)$ and
$\zeta\in C^{m+1-0}(\partial \Omega)$, as claimed.

Regarding the uniqueness claims in the theorem, finally, let
$(\phi, \zeta, a) \in C^{2}(\overline{\Omega}) \times
C(\partial\Omega)\times \mathbb{C}^{n_h}$ denote a solution
of~\eqref{eq:reformul1}-\eqref{eq:reformul2}, and let $u(x)$ be given
by~\eqref{eq:FL-sol-ws}. Clearly, $u$ satisfies the hypotheses of
Theorem~\ref{thm:FL_into_Lap}, and thus~\eqref{eq:frac-from-clas}
holds. In view of~\eqref{eq:poisson_solve},~\eqref{eq:frac-from-clas}
and~\eqref{eq:F_s-op}, and noting that the single- and double layer
potentials in~\eqref{eq:reformul1} are harmonic functions at all
$x\in\Omega$ (since so is $N(x,y)$ for $x\ne y$), applying the
operator $\Delta_x$ to both sides of~\eqref{eq:reformul1} yields
$(-\Delta)^s u(x) = f(x)$ for $x \in \Omega$. Since, by
Theorem~\ref{thm:fu_rel}, the solution $u$ is uniquely determined, it
follows that so is the $\phi$ component of the solution vector
$(\phi, \zeta, a)$. Furthermore, as discussed
following~\eqref{eq:inteq_lap}, the constant vector $a$ is uniquely
determined, and $\zeta$ is uniquely determined if and only if
$n_h = 0$. The proof is therefore complete.  \hfill\qedsymbol
\newpage

\begin{center}{\bf Proof of Theorem~\ref{thm:new_form_1D}}
\end{center}
The proof Theorem~\ref{thm:new_form_1D}, which parallels the one for
Theorem~\ref{thm:new_form_nD}, is based on an $n=1$ version of
Theorem~\ref{thm:FL_into_Lap}, namely,
\begin{empheq}[left={\empheqlbrace}]{alignat=2}
  &(-\Delta)^su(x) = \frac{d^2}{dx^2}\int_{a}^{b}|x-y|^{1-2s}u(y)dy, \quad s \ne \frac{1}{2}  \\
  &(-\Delta)^su(x) =
  \frac{d^2}{dx^2}\int_{a}^{b}\log{|x-y|}u(y)dy,\quad s = \frac{1}{2}.
\end{empheq}
The proofs of these relations proceed in the same manner as the proof
of Theorem~\ref{thm:FL_into_Lap}, making use of one-dimensional
counterparts, established in~\cite[Lemmas~2.3--2.4]{oscar2018}, of
Lemmas~\ref{lem:FL_op_grad} and~\ref{lem:und_int}.  The remainder of
the argument then follows exactly as in the proof of
Theorem~\ref{thm:new_form_nD}. \hfill\qedsymbol
\section{Numerical illustrations\label{sec:num-ex}}

An effective numerical algorithm for the solution of the FL
boundary-value problem~\eqref{frac_lapl} has been devised following
the lines of Theorem~\ref{thm:new_form_nD} and the overall theoretical
framework presented in this paper. A key feature of the algorithm lies
in its inversion of the Laplace operator via layer potentials, which
eliminates major sources of implementation complexity, numerical
instability (cancellation errors) and ill-conditioning that often
arise in the numerical solution of the FL problem. Previous numerical
approaches—including adaptive finite element methods (FEM), radial
basis functions, isogeometric methods, and Monte Carlo
simulations—have generally exhibited low convergence rates and limited
accuracy, typically yielding errors on the order of
$10^{-2}$–$10^{-3}$~\cite{lischkle2020,xu2020,burkardt2021,minden2020,acosta2017short},
except in cases involving artificially smooth test solutions. A
detailed description of the new algorithm will be presented
elsewhere. To illustrate the practical impact of the theoretical
results presented in this paper, however, this section presents
representative numerical results produced by a Julia computational
implementation of the new numerical method for both simply connected
and multiply connected domains, and including comparisons with an
exact solution and with results for a multiply connected domain
produced by the FEM algorithm~\cite{acosta2017short}.

Throughout this section $N$ denotes the total number of unknowns
employed, including volumetric unknowns over the 2D FL domain $\Omega$
and boundary unknowns over $\partial \Omega$. The solution accuracies
are quantified using the maximum relative error and the root-mean-square (RMS) error, defined by
\begin{equation}
    \varepsilon_{N,\infty} = \frac{\max_{1 \le j \le N} |u_j - u_j^{\text{ref}}|}{\max_{1 \le j \le N} |u_j^{\text{ref}}|}, 
    \quad 
    \varepsilon_{N,\mathrm{rms}} = \sqrt{\frac{1}{N}\sum_{j=1}^N |u_j - u_j^{\text{ref}}|^2},
\end{equation}
respectively, where $u^{\text{ref}}$ is taken to equal the exact
solution, when available, or, otherwise, a reference solution computed
numerically on a fine grid. The numerical order of convergence (noc) for a given
discretization size $N=N_2$ in the various tables is calculated as
\begin{equation}
  \mathrm{noc} = \frac{\log\!\left(\varepsilon_{N_1,\infty}/ \varepsilon_{N_2,\infty}\right)}{\log\left(\sqrt{N_2/N_1}\right)},
\end{equation}
where $N=N_1$ is the number of unknowns in the experiment tabulated in
the row immediately preceding the row for $N=N_2$.

\begin{figure}[H]
\centering
\begin{minipage}{0.5\textwidth}
  \centering
  \includegraphics[width=\linewidth]{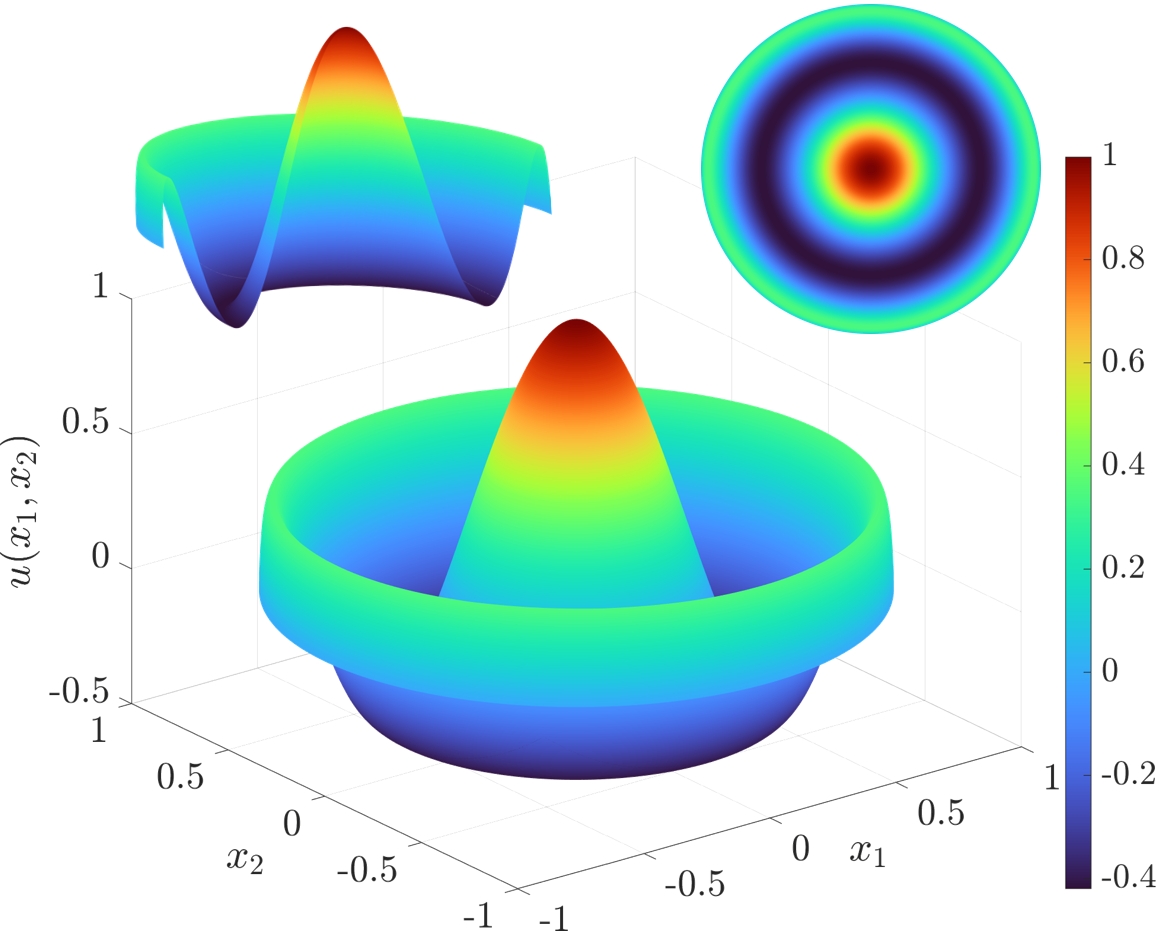} 
\end{minipage}
\hspace{2mm}
\begin{minipage}{0.4\textwidth}
    \begin{table}[H]
        \centering
        \renewcommand{\arraystretch}{1.25}
        \scalebox{0.8}{\begin{tabular}{|c|c|c|c|c|}\hline
           $N$  &  $\varepsilon_{N,\infty}$ & $\varepsilon_{N,\mathrm{rms}}$ & noc & time (sec) \\\hline
          $96$   & $2.70 \times 10^{-1}$  & $7.57 \times 10^{-2}$  & \text{--} & 0.290 \\
        $145$  & $2.00 \times 10^{-2}$  & $5.74 \times 10^{-3}$  & 12.6      & 0.457 \\
        $204$  & $7.47 \times 10^{-4}$  & $2.56 \times 10^{-4}$  & 19.2      & 0.677 \\
        $273$  & $2.59 \times 10^{-5}$  & $7.56 \times 10^{-6}$  & 23.0      & 1.278 \\
        $441$  & $1.05 \times 10^{-6}$  & $2.12 \times 10^{-7}$  & 13.4      & 2.261 \\
        $768$  & $5.66 \times 10^{-9}$  & $1.35 \times 10^{-9}$  & 18.8      & 9.832 \\
        $2080$ & $6.22 \times 10^{-10}$ & $7.17 \times 10^{-11}$ & 4.43      & 35.58 \\
        $2976$ & $5.15 \times 10^{-11}$ & $5.36 \times 10^{-12}$ & 13.9      & 59.32 \\
        $3720$ & $1.86 \times 10^{-11}$ & $2.00 \times 10^{-12}$ & 9.1       & 85.58 \\
        $4464$ & $8.46 \times 10^{-12}$ & $6.88 \times 10^{-13}$ & 8.7       & 114.0 \\
        $5328$ & $5.14 \times 10^{-13}$ & $4.55 \times 10^{-14}$ & 31.5      & 266.7 \\\hline
        \end{tabular}}
        \captionof{table}{\label{tab:disc_solution}}
    \end{table}
\end{minipage}
\caption{Left: side, top, and slice views of the solution $u$ of the
  FL problem~\eqref{frac_lapl} on the domain $\Omega$ equal to the
  unit disc, with right-hand function $f$ as described in the
  text. Right: Numerical errors, noc and computing times resulting
  from a Julia implementation of the proposed solver (which is based
  on the theoretical framework presented in this
  paper).\label{fig:disc_solution}}
\end{figure}

\begin{figure}[H]
\centering
\begin{minipage}{0.59\textwidth}
  \centering
  \includegraphics[width=\linewidth]{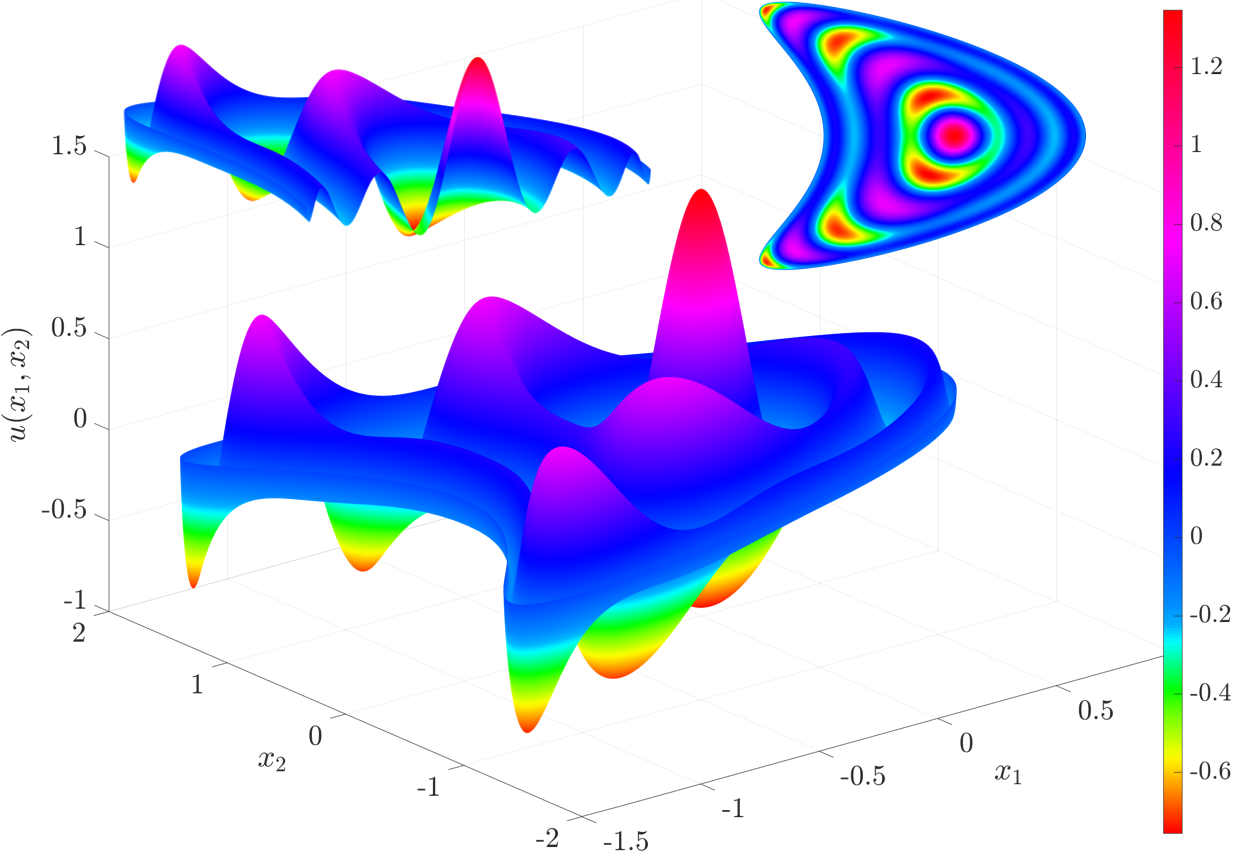} 
\end{minipage}
\begin{minipage}{0.4\textwidth}
  \begin{table}[H]
    \centering
    \renewcommand{\arraystretch}{1.25}
    \scalebox{0.8}{\begin{tabular}{|c|c|c|c|} \hline 
    $N$  &  $\varepsilon_{N,\infty}$ & $\varepsilon_{N,\mathrm{rms}}$ & noc \\ \hline 
    $1300$  & $6.48 \times 10^{-2}$  & $1.12 \times 10^{-2}$  & --   \\ 
    $1848$  & $8.91 \times 10^{-4}$  & $1.49 \times 10^{-4}$  & 24.4 \\
    $3336$  & $1.00 \times 10^{-5}$  & $8.05 \times 10^{-7}$  & 15.2 \\ 
    $9528$  & $4.89 \times 10^{-7}$  & $7.24 \times 10^{-8}$  & 5.8  \\ 
    $11568$ & $6.10 \times 10^{-9}$  & $9.35 \times 10^{-10}$ & 45.2 \\ 
    $23544$ & $3.61 \times 10^{-10}$ & $3.03 \times 10^{-11}$ & 8.0  \\ 
    $36336$ & $3.67 \times 10^{-11}$ & $1.15 \times 10^{-12}$ & 10.5 \\ 
    $38352$ & $1.29 \times 10^{-11}$ & $4.70 \times 10^{-13}$ & 38.7 \\ \hline \end{tabular}}
    \captionof{table}{\label{tab:kite_solution}}
\end{table}
\end{minipage}
\caption{Left: Same as the left panel of
  Figure~\ref{fig:disc_solution} but for the kite-shaped domain
  $\Omega$ described in the text. Right: Numerical errors and noc
  resulting from the proposed solver.\label{fig:kite_solution} }
\end{figure}

Three 2D numerical examples for problem~\eqref{frac_lapl} are
presented in
Figures~\ref{fig:disc_solution}--\ref{fig:annulus_solution}. For each
example, we report numerical errors under various discretizations and
provide visualizations of the solution $u$, including side, top, and
slice views. In all cases, the right-hand sides $f$ are chosen as
infinitely differentiable functions of the form
\begin{equation}\label{eq:f_form_num}
f(x_1,x_2) = 2^{2s} \frac{\Gamma(s+k+1)^2}{(k!)^2} (-1)^k
P_k^{(s,0)}\bigl(2g(x_1,x_2)-1\bigr),
\end{equation}
for various choices of $g=g(x_1,x_2)$, $s$, and $k$, where
$P_k^{(s,0)}(x)$ denotes the Jacobi polynomial of degree
$k$~\cite{szeg1939orthogonal}. We emphasize that, for such
smooth right-hand side functions $f$, the  solutions $u$ retain the full
boundary singularity factor $\left(d(x_1,x_2)\right)^s$.

The first test case concerns the solution of~\eqref{frac_lapl} with
$s=0.5$ and $\Omega$ equal to the unit disc centered at the origin,
and with the right-hand function given by~\eqref{eq:f_form_num} with
$g(x_1,x_2) = x_1^2 + x_2^2$ and $k=2$. The exact solution is given by
\[
u(x_1,x_2) = u^{\mathrm{ref}}(x_1,x_2) = (-1)^k \, (1-x_1^2-x_2^2)^s \, P_k^{(s,0)}\bigl(2g(x_1,x_2)-1\bigr).
\]
The corresponding numerical results are presented in
Figure~\ref{fig:disc_solution} and
Table~\ref{tab:disc_solution}. Clearly, the error decreases rapidly
with increasing $N$, initially displaying very high convergence rates
(e.g., $\text{noc} \approx 12.6$ between $N=96$ and $N=145$), before
gradually stabilizing as the approximation reaches machine precision.
It is worth noting that, for example, using a total of $441$ unknowns
and in just $2.26$ seconds of computing time the algorithm achieves
$7$ digits of accuracy. The computational cost can be greatly reduced
by using kernel-independent acceleration techniques such as the IFGF
method~\cite{bauinger2021}, which reduces the computing cost from
$\mathcal{O}(N^2)$ operations to $\mathcal{O}(N\log N)$ operations,
but such methodologies were not incorporated in the computer code used
for the examples presented in this section.

\begin{figure}[H]
\centering
\begin{minipage}{0.55\textwidth}
  \centering
  \includegraphics[width=\linewidth]{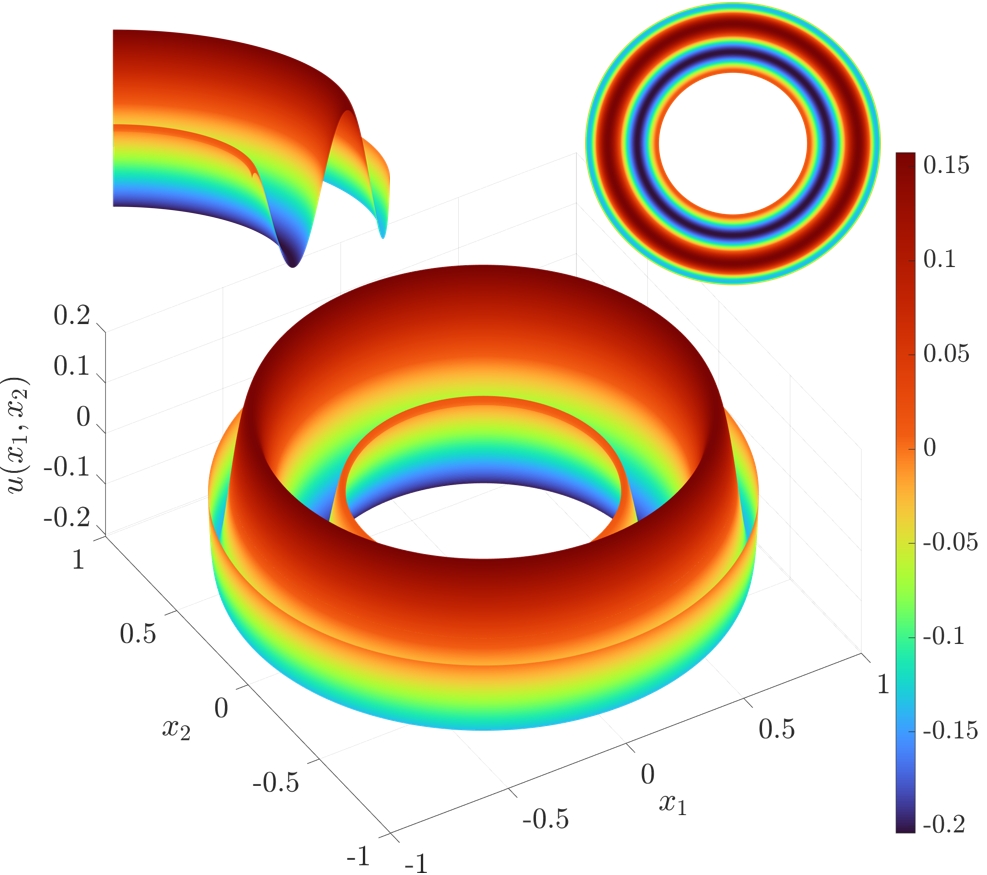} 
\end{minipage}
\hfill
\begin{minipage}{0.4\textwidth}
    \begin{table}[H]
    \centering
        \renewcommand{\arraystretch}{1.25}
        \scalebox{0.8}{\begin{tabular}{|c|c|c|c|} \hline
        $N$     & $\varepsilon_{N,\infty}$  & $\varepsilon_{N,\mathrm{rms}}$  & noc \\ \hline
        $320$   & $4.00 \times 10^{-1} $    & $3.52 \times 10^{-2} $          & -   \\ 
        $896$   & $1.03 \times 10^{-4} $    & $7.79 \times 10^{-6} $          & $16.1$  \\ 
        $1760$  & $1.81 \times 10^{-8} $    & $1.22 \times 10^{-9} $          & $25.6$  \\ 
        $2496$  & $3.73 \times 10^{-10}$    & $2.30 \times 10^{-11}$          & $22.2$  \\ 
        $6720$  & $3.32 \times 10^{-11}$    & $1.93 \times 10^{-12}$          & $4.9$   \\ 
        $9600$  & $1.61 \times 10^{-12}$    & $7.23 \times 10^{-14}$          & $17.0$  \\ \hline
        \end{tabular}}
        \captionof{table}{\label{tab:annulus_solution_RP}}
    \end{table}

  \begin{table}[H]
    \centering
        \renewcommand{\arraystretch}{1.25}
        \scalebox{0.8}{\begin{tabular}{|c|c|c|c|}\hline
            DOF &  $\#$Triangles&  Max error& noc \\ \hline
            $447$& $817$& $1.16\times 10^{-1}$& -\\
            $1800$&  $3448$&  $7.59\times 10^{-2}$& $0.305$\\
            $7217$& $14131$& $4.66\times 10^{-2}$& $0.351$\\\hline
        \end{tabular}}
        \captionof{table}{\label{tab:annulus_solution_fem}}
    \end{table}
\end{minipage}
\caption{Left: Same as the left panel of
  Figure~\ref{fig:disc_solution} but for the multiply-connected
  annular domain $\Omega$ described in the text. Upper table:
  Numerical errors and noc resulting from the proposed solver.  Lower
  table: Convergence exhibited by the FEM solution mentioned in the
  text to the reference solution produced by the proposed solver,
  providing, in particular, independent validation for the new
  formulation for multiply connected domains.  The observed
  convergence trend is consistent with the error estimate, of the
  order of $(\mbox{DOF})^{-1/2}$, reported
  in~\cite[Fig. 2]{acosta2017short} for the case $s=0.75$ considered
  here.\label{fig:annulus_solution}}
\end{figure}

For the second example we take $s=0.75$ and the right-hand
function~\eqref{eq:f_form_num} with $k=5$ and
$g(x_1,x_2) = \left( x_1 + 1.3\left( x_2/1.5 \right)^2 \right)^2 +
\left( x_2/1.5 \right)^2$, over the kite-shaped domain
$\Omega = \{ (x_1,x_2) \in \mathbb{R}^2 : g(x_1,x_2) \le 1 \}$.  The
corresponding numerical solution $u$ of~\eqref{frac_lapl} and
associated errors are displayed in Fig.~\ref{fig:kite_solution} and
Table~\ref{tab:kite_solution}, once again demonstrating fast
convergence and high accuracy. The computing times, which are larger
than those displayed in Table~\ref{tab:disc_solution} on account of
the higher oscillatory character of the solution, are consistent with
the aforementioned $\mathcal{O}(N^2)$ cost estimate, and could be
reduced by using acceleration methods such as~\cite{bauinger2021}. In
any case, we note, for reference, that the four-digit, $N=1848$
solution in Table~\ref{fig:kite_solution} was produced in a computing
time of 24.4 secs.

In our third and final example we take  $s=0.75$ utilize the multiply-connected annular domain
$$\Omega = \left\{ (x_1,x_2) \in \mathbb{R}^2 : \tfrac{1}{4} \le
  x_1^2+x_2^2 \le 1 \right\},$$ and the right-hand function $f$ given
by~\ref{eq:f_form_num} with $k=3$ and
$g(x_1,x_2) = \tfrac{1}{3}(4(x_1^2+x_2^2)-1)$. The corresponding
numerical solution $u$ of~\eqref{frac_lapl} and associated errors are
presented in the left panel of Fig.~\ref{fig:annulus_solution} and
Table~\ref{tab:annulus_solution_RP}, respectively. The errors in the
solution produced by the FEM algorithm~\cite{acosta2017short},
computed relative to reference values obtained by evaluating, on the
FEM grid, the Chebyshev approximant produced by the new numerical
method, are presented in Table~\ref{tab:annulus_solution_fem}. (It may
be useful to note that the reference solution used here coincides with
the one employed to compute the errors reported in
Table~\ref{tab:annulus_solution_RP}.) The two-digit accuracy and slow
but clear convergence exhibited by the FEM solution in
Table~\ref{tab:annulus_solution_fem}, which is additionally consistent
with the error estimate of the order of $(\mbox{DOF})^{-1/2}$ reported
in~\cite[Fig. 2]{acosta2017short} for the case $s=0.75$ considered in
this example, provide independent validation of the proposed
mathematical formalism for multiply connected domains.
\section*{Acknowledgments}
The authors gratefully acknowledge support from the Air Force Office
of Scientific Research and the National Science Foundation under
contracts FA9550-21-1-0373, FA9550-25-1-0015 and DMS-2109831.

    
\appendix
\section{Appendix: Differentiation under the integral sign\label{Ap_Fubini}}

\begin{lem}\label{lem:Ap_Fubini}
  Let $n\in\mathbb{N}$, and let $ V \subset \mathbb{R}^n$ denote a
  Lebesgue measurable set. Further let $a,b \in \mathbb{R}$ with
  $a<b$, and let $g:[a,b] \times V \to \mathbb{R}$, $g=g(\tau,t)$,
  denote a function which, for $t$ outside a set $E\subset V$ of
  measure zero, is absolutely continuous with respect to $\tau$ in the
  interval $[a,b]$.  Assume, further, that (i)~The functions $g$ and
  $\frac{\partial g}{\partial \tau}$ are integrable over $V$ for each
  fixed $\tau\in [a,b]$; that, (ii)~The integral over $V$ of
  $\frac{\partial g}{\partial \tau}$ is a continuous function of
  $\tau$ for $\tau\in [a,b]$; and that, (iii)
  \begin{equation}\label{eq:der_g_int}
   \int_a^b d\tau \int_{V} \left|\frac{\partial g}{\partial \tau}(\tau,t) \right|  dt < \infty. 
 \end{equation}
 Then, the function
 \begin{equation}
   G(\tau) = \int_{V} g(\tau,t) dt 
 \end{equation}
 is differentiable with respect to $\tau$ and
 \begin{equation}\label{eq:der_G}
   \frac{d G}{d \tau}(\tau) = \int_V \frac{\partial g}{\partial \tau}(\tau,t)dt
   \end{equation}
 \end{lem}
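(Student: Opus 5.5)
The plan is to reduce the statement to Fubini's theorem and the fundamental theorem of calculus for Lebesgue integrals. First I will fix $\tau\in[a,b]$ and use absolute continuity in $\tau$. For every $t\notin E$,
\[
g(\tau,t)-g(a,t)=\int_a^\tau \frac{\partial g}{\partial \tau}(\sigma,t)\,d\sigma .
\]
This holds because an absolutely continuous function is differentiable almost everywhere, its derivative is integrable, and the function is recovered by integrating that derivative. The derivative $\partial g/\partial\tau(\cdot,t)$ therefore exists for a.e.\ $\sigma$ and each $t\notin E$; I will read hypotheses (i)--(iii) as referring to this a.e.-defined function, which is implicitly assumed measurable on $[a,b]\times V$.

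Next I will integrate this identity over $V$. Since $E$ has measure zero, it does not affect the integrals, and hypothesis (i) makes $G(\tau)$ and $G(a)$ finite. This gives
\[
G(\tau)-G(a)=\int_V\int_a^\tau \frac{\partial g}{\partial \tau}(\sigma,t)\,d\sigma\,dt .
\]
Hypothesis (iii), combined with Tonelli's theorem applied to $|\partial g/\partial\tau|$ on $[a,\tau]\times V$, shows that $\partial g/\partial\tau$ is integrable on the product set. Fubini's theorem then allows the order of integration to be exchanged:
\[
G(\tau)=G(a)+\int_a^\tau \Phi(\sigma)\,d\sigma,\qquad \Phi(\sigma)=\int_V \frac{\partial g}{\partial \tau}(\sigma,t)\,dt .
\]

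Finally, hypothesis (ii) says that $\Phi$ is continuous on $[a,b]$. By the classical fundamental theorem of calculus, $G$ is then differentiable at every $\tau\in[a,b]$, one-sided at the endpoints, with $G'(\tau)=\Phi(\tau)$. This is exactly \eqref{eq:der_G}.

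The main obstacle is justifying the Fubini exchange. It needs joint measurability of $\partial g/\partial\tau$ on $[a,b]\times V$, and it needs the pointwise identity to hold uniformly off the single null set $E$. I expect to handle the first point by writing $\partial g/\partial\tau$ as a limsup of difference quotients $k\,(g(\sigma+1/k,t)-g(\sigma,t))$, which are measurable provided $g$ is jointly measurable; that is a standing assumption I would make explicit. The remaining steps are routine.
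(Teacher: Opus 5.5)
Your proposal is correct and follows essentially the same route as the paper: absolute continuity in $\tau$ recovers $g(\tau,t)-g(a,t)$ for $t\notin E$, Fubini (justified by (iii)) exchanges the order of integration, and the fundamental theorem of calculus together with the continuity in (ii) gives the derivative formula. You also make explicit the joint measurability of $\partial g/\partial\tau$ on $[a,b]\times V$, which the paper uses tacitly when it invokes Fubini.
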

 \begin{proof}
   In view of~\eqref{eq:der_g_int}, we may use Fubini's theorem to write
   \begin{equation}\label{eq:fubini_applied}
     \int_a^{\tau}d\tau' \int_V \frac{\partial g}{\partial \tau}(\tau',t) dt = \int_V dt \int_a^{\tau} \frac{\partial g}{\partial \tau}(\tau',t) d\tau'
   \end{equation}
   Since $g$ is absolutely continuous with respect to $\tau$ for
   $t\not\in E$, it follows that~\cite[Ch. 5 Cor. 15]{royden1988}, for
   such values of $t$ we have,
   $ \int_a^{\tau} \frac{\partial g}{\partial \tau}(\tau',t) d\tau' =
   g(\tau,t)-g(a,t)$. Substituting this identity
   into~\eqref{eq:fubini_applied}
   yields\begin{equation}\label{eq:G_difference}
     \int_a^{\tau}d\tau'\int_V \frac{\partial g}{\partial
       \tau}(\tau',t) \,dt=G(\tau)-G(a).
   \end{equation}
    By assumption (ii), the function $\int_V \partial_{\tau}
g(\tau,t)\,dt$ in $\tau$ is continuous on $[a,b]$. Hence the left-hand side of~\eqref{eq:G_difference} is differentiable with respect to $\tau$, and differentiation yields the desired relation~\eqref{eq:der_G}.
\end{proof}

\begin{remark}\label{rem:continuity}
  The continuity assumption in point~(ii) of the previous lemma is
  readily verified for the integrals involving weakly singular kernels
  considered in this paper---by first truncating the kernel in a
  neighborhood of the singularity to obtain a uniformly bounded
  integrand, and then applying the dominated convergence theorem to the
  resulting translated truncated kernels, with the truncation error
  made arbitrarily small.
\end{remark}

\section{Appendix: Proof of Lemma~\ref{lem:und_int}\label{Ap1}}
We show that~\eqref{eq:PV_grad_inter} holds at $x = z$ for every fixed
$z\in \Omega$. To do this, for each such $z$ we first take a real
number $R_2>0$ such that the closure $K_{z,2} = \overline{B_{R_2}(z)}$
of the ball $B_{R_2}(z)$ centered at $z$ of radius $R_2$ is contained
in $\Omega$: $K_{z,2}\subset \Omega$. Further we take $R_1>0$ with
$R_1<R_2$ and we similarly define $K_{z,1} = \overline{B_{R_1}(z)}$; in
particular $K_{z,1}\subset K_{z,2}\subset \Omega$.  Further we select a
number $\delta>0$ such that $B_{\delta}(x) \subset K_{z,2} $ for all
$x \in K_{z,1}$, and for each $x \in K_{z,1}$ and $0 < \varepsilon < \delta$ we
define
\begin{equation}\label{eq:F_def_AP1}
  H_\varepsilon(x)=\int_{\Omega\setminus B_{\varepsilon}(x)} w(y)|x-y|^{-n-2s+2} dy.
\end{equation}

As shown in what follows, we have (i)~$H_\varepsilon$ is a
differentiable function of $x$ for $x \in K_{z,1}$, (ii)
$\frac{\partial H_\varepsilon}{\partial x_i}$ converges uniformly as
$\varepsilon \to 0$ for all $x \in K_{z,1}$ and for each $i$
($1\leq i\leq n$), and (iii) the relation
\begin{equation}\label{eq:AP1_main1}
  \lim_{\varepsilon \to 0} \frac{\partial H_\varepsilon}{\partial x_i} (x)=P.V \int_{\Omega} w(y) \frac{\partial }{\partial x_i} |x-y|^{-n-2s+2}dy
\end{equation}
holds for all $x\in K_{z,1}$ and for each $i$. Recognizing that
interchanging the limit and differentiation on the left-hand side
of~\eqref{eq:AP1_main1} yields the desired
expression~\eqref{eq:PV_grad_inter}, since
\[
\lim_{\varepsilon \to 0} H_\varepsilon(x)
= H_0(x)
= \int_{\Omega} w(y)\,|x-y|^{-n-2s+2}\,dy,
\]
and noting that, by~\cite[Thm.~7.17]{rudin1976}, the function $H_0$ is
indeed differentiable and the interchange of the limit and
differentiation operations is justified provided conditions~(i)
and~(ii) hold, we see that the proof of the lemma will be
complete once conditions~(i), (ii), and~(iii) are verified.

\begin{figure}[H]
\centering
\centerline{\includegraphics[width=13cm]{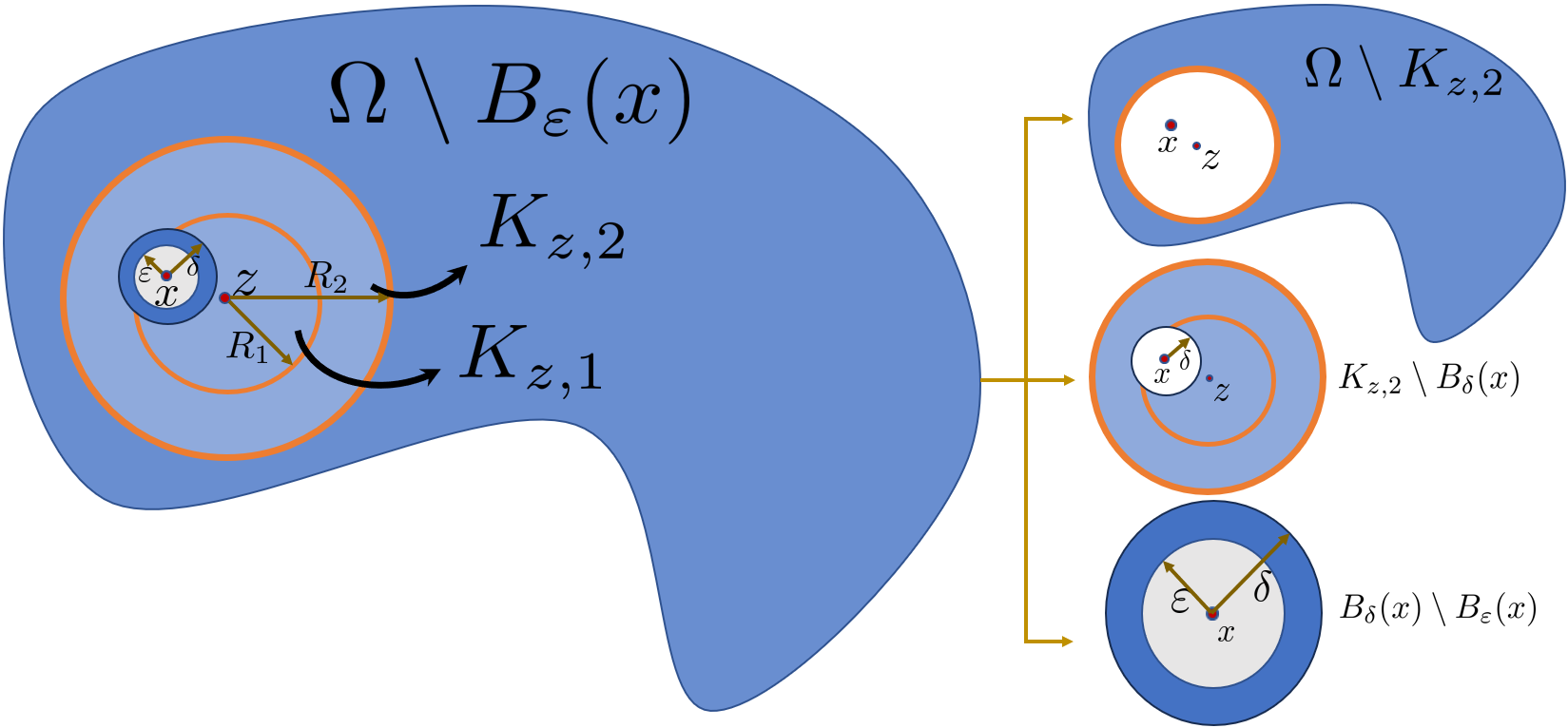}}
\caption{Illustration of $\Omega \setminus B_{\varepsilon}(x)$,
  $K_{z,1}$, and other auxiliary subsets of $\Omega$ employed in the
  proof of Lemma~\ref{lem:und_int}, with $x \in K_{z,1}$ and
  $0<\varepsilon<\delta$.}
\end{figure}
We now establish point~(i) and lay the groundwork for the proof of
points~(ii) and~(iii). To this end, we first decompose the integration
domain on the right-hand side of~\eqref{eq:F_def_AP1} as
$\Omega \setminus B_\varepsilon(x) = \left( \Omega \setminus K_{z,2}
\right) \cup \left( K_{z,2} \setminus B_\delta(x) \right) \cup \left(
  B_\delta(x) \setminus B_\varepsilon(x) \right)$, and we denote the
corresponding integrals by $\widetilde{H}(x)$, $H_\delta(x)$, and
$H_{\delta,\varepsilon}(x)$, respectively. This yields the
decomposition
\begin{equation}\label{eq:F_three_parts_AP1}
  H_\varepsilon(x) = \left(\int_{\Omega \setminus K_{z,2}} + \int_{K_{z,2} \setminus B_\delta(x)} + \int_{B_\delta(x) \setminus B_\varepsilon(x)}\right) \frac{w(y)}{|x - y|^{n + 2s - 2}}\, dy = \widetilde{H}(x) + H_\delta(x) + H_{\delta,\varepsilon}(x).
\end{equation}
Since the integrand of $\widetilde{H}$ is smooth for $x \in K_{z,1}$ and $y \in \Omega \setminus K_{z,2}$, and since the corresponding integration domain is independent of $x$, it follows that $\widetilde{H}(x)$ is differentiable for $x\in K_{z,1}$. Moreover, the partial derivative of $\widetilde{H}$ with respect to $x_i$ can be computed by differentiating under the integral sign:
\begin{equation}\label{eq:F1_der_AP1}
\frac{\partial \widetilde{H}}{\partial x_i}(x) =  \int_{\Omega \setminus K_{z,2}} w(y)\frac{\partial }{\partial x_i}|x-y|^{-n-2s+2}dy.
\end{equation}
The integral $H_{\delta,\varepsilon}$, in turn, can be expressed,
using spherical coordinates around $x$, as an integral over the unit
sphere $\partial B_1(0)$:
\begin{equation}\label{eq:third_int_polar_AP1}
H_{\delta,\varepsilon}(x) = \int_{\partial B_{r}(x)} \int_{\varepsilon}^{\delta} w(y) r^{-n - 2s + 2}  dr dS_{y} = \int_{\partial B_{1}(0)} \int_{\varepsilon}^{\delta} r^{1 - 2s} w(r\xi + x) dr dS_\xi.
\end{equation}
Since $w \in C^1(\Omega)$, the right-hand integral clearly displays
$H_{\delta,\varepsilon}$ as a differentiable function of $x$ for
$x\in K_{z,1}$, and it shows that its partial derivative with respect
to $x_i$ is given by
\begin{equation}\label{eq:F3_der_AP1}
\frac{\partial H_{\delta,\varepsilon}}{\partial x_i}(x) = \int_{\partial B_{1}(0)} \int_{\varepsilon}^{\delta} r^{1 - 2s} D_i w(r\xi + x) dr dS_\xi = \int_{ B_\delta(x) \setminus B_\varepsilon(x)} D_iw(y)|x-y|^{-n-2s+2} dy,
\end{equation}
where $D_iw$ denotes the partial derivative of $w$ with respect to the
$i$-th coordinate. The integral $H_\delta(x)$, finally, is handled by
using once again spherical coordinates centered at $x$.  To do this,
for a unit vector $\xi\in\partial B_{1}(0)$ we call $\rho_x(\xi)$ the
value of the radial parameter $r$ for which the ray
$\{r\xi + x\ |\ r>0 \}$ intersects the sphere
$\partial K_{z,2} = \{t\in \mathbb{R}^n\ | \ |t - z| = R_2 \}$, that
is to say,
\begin{equation}\label{eq:rho_def_AP1}
  \rho_x(\xi) = \mbox{positive solution}\quad r\quad\mbox{of the equation}\quad | x+r\xi-z|=R_2;
\end{equation}
note that the $z$-dependence of the quantity $\rho_x(\xi)$ is suppressed
in the notation since $z$ is held constant as $x$ and $\xi$ vary.
Clearly, the map $\gamma_x:\partial B_1(0)\to\partial K_{z,2}$ defined by
\begin{equation}
  \label{eq:param}
  \gamma_x(\xi) = x+\xi\rho_x(\xi)
\end{equation}
provides a smooth parametrization of the sphere $\partial
K_{z,2}$. For future use in this proof we note that, employing the
expression for the solution of a quadratic equation, the relation
\begin{equation}\label{eq:gamma_x_exp}
\rho_x(\xi)   =\alpha_x(\xi)-\beta_x(\xi),\quad\text{where}\quad    \alpha_x(\xi) = \sqrt{R_2^2 - |x-z|^2 + \beta_x^2(\xi)}\quad \text{and} \quad \beta_x(\xi) = (x-z)\cdot \xi
\end{equation}
results. Using~\eqref{eq:rho_def_AP1}, the integral $H_\delta$, may be
expressed in the spherical-coordinate form
\begin{equation}\label{eq:F_2_rexpressed_AP1}
    H_\delta(x) = \int_{\partial B_{1}(0)}\int_{\delta}^{\rho_x(\xi)}r^{1-2s} w(r\xi+x) dr dS_\xi.
\end{equation}
Since $\rho_x(\xi)$ is a $C^{\infty}$ function of $(x,\xi)$ for
$x \in K_{z,1}$ and $\xi\in\partial B_1(0)$,  Leibniz integral rule tells
us that $H_\delta$ is differentiable with respect to $x_i$ and
\begin{equation}\label{eq:F2_der_AP1}
    \frac{\partial H_\delta}{\partial x_i}(x) =  I^1(x) + \int_{K_{z,2} \setminus B_\delta(x)}D_iw(y)|x-y|^{-n-2s+2} dy,
\end{equation}
where the boundary term $I^1$ is given by 
\begin{equation}\label{eq:I1_def_AP1}
    I^1(x) = \int_{\partial B_{1}(0)} (\rho_x(\xi))^{1-2s} w(x+\rho_x(\xi) \xi)\frac{\partial \rho_x}{\partial x_i}(\xi) dS_\xi.
\end{equation}
Having shown that all three terms in~\eqref{eq:F_three_parts_AP1} are
differentiable with respect to $x_i$ for $x\in K_{z,1}$ it follows
that so is $H_\varepsilon$, and the proof of point~(i) is
complete.

To establish point~(ii), in turn, in view of
equation~\eqref{eq:F_three_parts_AP1}, and since the terms $\widetilde{H}$ and
$H_\delta$ in that equation are independent of $\varepsilon$, it
suffices to show that
$\partial H_{\delta,\varepsilon}(x)/\partial x_i$ converges uniformly
as $\varepsilon \to 0$. To this end we note that, in view
of~\eqref{eq:F3_der_AP1}, the limit
$\lim_{\varepsilon \to 0}\frac{\partial
  H_{\delta,\varepsilon}}{\partial x_i}(x)$ exists for $x\in K_{z,1}$
(since the corresponding integrand is integrable), and
\begin{equation}
\left| \lim_{\varepsilon \to 0}\frac{\partial H_{\delta,\varepsilon}}{\partial x_i}(x) - \frac{\partial H_{\delta,\varepsilon}}{\partial x_i}(x)\right| \le  \int_{\partial B_{1}(0)}\int_{0}^{\varepsilon} r^{1-2s} \left| D_iw(r\xi+x) \right| dr dS_\xi \le \underset{\substack{x\in K_{z,1} ,\, r \in (0,\varepsilon)\\ \xi \in \partial B_1(0)}}{\sup} |D_iw(r\xi+x)| \frac{\varepsilon^{2(1-s)}}{2(1-s)}\omega_n
\end{equation}
where $\omega_n$ denotes the surface area of the $n$-th dimensional
sphere. Therefore,
$ \frac{\partial H_{\delta,\varepsilon}}{\partial x_i}$ converges
uniformly as $\varepsilon \to 0$, and point (ii) follows.

To establish point~(iii), finally, we begin by adding
equations~\eqref{eq:F1_der_AP1}, \eqref{eq:F3_der_AP1} and
\eqref{eq:F2_der_AP1}, and thus obtain
\begin{equation}\label{eq:sum_ders}
\frac{\partial H_\varepsilon}{\partial x_i}(x) = I^1(x)+\int_{K_{z,2}\setminus B_{\varepsilon}(x)} D_iw(y)|x-y|^{-n-2s+2}dy + \int_{\Omega \setminus K_{z,2}} w(y)\frac{\partial }{\partial x_i}|x-y|^{-n-2s+2}dy.
\end{equation}
Integrating by parts the integral over
$K_{z,2}\setminus B_{\varepsilon}(x)$ in~\eqref{eq:sum_ders} we 
obtain
\begin{equation}\label{eq:int_by_parts_Kz_B}
    \int_{K_{z,2} \setminus B_{\varepsilon}(x)} D_iw(y)|x-y|^{-n-2s+2} dy = I^2(x) + I^3_{\varepsilon}(x)+\int_{K_{z,2} \setminus B_{\varepsilon}(x)} w(y)\frac{\partial }{\partial x_i}|x-y|^{-n-2s+2} dy
\end{equation}
where, using the $i$-th components $\frac{x_i-y_i}{\varepsilon}$ and
$\frac{y_i-z_i}{R_2}$ of the normals exterior to the domain
$K_{z,2} \setminus B_{\varepsilon}(x)$ along the surfaces
$\partial B_{\varepsilon}(x)$ and $\partial K_{z,2}$, respectively,
the boundary terms $I^2(x)$ and $I^3_{\varepsilon}(x)$ are given by
\begin{equation}\label{eq:I2_def_AP1}
I^2(x) = \int_{\partial K_{z,2}} w(y) |x-y|^{-n-2s+2}\frac{y_i-z_i}{R_2}dS_y
\end{equation}
and 
\begin{equation}\label{eq:I3eps}
I^3_{\varepsilon}(x) = \int_{\partial B_{\varepsilon}(x)}w(y) |x-y|^{-n-2s+2}\frac{x_i-y_i}{\varepsilon}dS_y = -\int_{\partial B_{\varepsilon}(0)} w(x+y) \frac{y_i}{\varepsilon^{n+2s-1}} dS_y,
\end{equation}
respectively. Using manipulations analogous to those
following~\eqref{eq:grad_kernel}, the right-hand side
of~\eqref{eq:I3eps} may be expressed in a form analogous
to~\eqref{eq:bd_int_z}, which, in particular, show that
$I^3_{\varepsilon}(x) \to 0$ as $\varepsilon \to 0$. Thus
\begin{equation}\label{eq:AP1_main1_prefinal}
\lim_{\varepsilon\to 0}\frac{\partial H_\varepsilon}{\partial x_i}(x) = I^1(x)+I^2(x) + P.V \int_{\Omega} w(y) \frac{\partial }{\partial x_i} |x-y|^{-n-2s+2}dy.
\end{equation}
and, thus, to complete the proof it suffices to show that $I^1+I^2 = 0$. 

To do this we first use~\eqref{eq:gamma_x_exp} and obtain the relations
$\frac{\partial \rho_x}{\partial x_i} = \frac{\partial
  \alpha_x}{\partial x_i} - \frac{\partial \beta_x}{\partial x_i}$,
$\frac{\partial \alpha_x}{\partial x_i}(\xi) = \frac{\beta_x(\xi) \xi_i -
  (x_i-z_i)}{\alpha_x(\xi)}$ and
$\frac{\partial \beta_x}{\partial x_i} = \xi_i$, and we thereby
re-express the integral $I^1$ in~\eqref{eq:I1_def_AP1} in the form
\begin{equation}\label{eq:I1_final_AP1}
    I^1(x) = -\int_{\partial B_{1}(0)} (\rho_x(\xi))^{1-2s} w(x+\rho_x(\xi) \xi)\frac{x_i-z_i+\rho_x(y) \xi_i}{\alpha_x(\xi)} dS_\xi.
\end{equation}
To express $I^2$ as an integral over $\partial B_{1}(0)$, on the other
hand, we employ the change of variables
\begin{equation}
  \widetilde{\gamma}_x : [0,\pi]^{n-2}\times [0,2\pi]\to \partial K_{z,2} \quad \mbox{defined by} \quad \widetilde{\gamma}_x(\theta) =  x + \rho_x(\xi(\theta))\, \xi(\theta)
 \end{equation}
 where
 $\xi :[0,\pi]^{n-2}\times [0,2\pi]\to \partial
 B_1(0)\subset\mathbb{R}^n$ denotes the spherical change of variables~\eqref{spher_1}. Using this change of variables the integral $I^2$ takes the form
\begin{equation}
  I^2(x) = \int_{[0,\pi]^{n-2} \times [0,2\pi]} (\rho_x(\xi(\phi,\theta)))^{-n-2s+2} w(x+\rho_x(\xi(\phi,\theta)) \xi(\phi,\theta))\frac{x_i-z_i+\rho_x \xi_i(\phi,\theta)}{R_2}  \sqrt{\text{det}(G_{\widetilde{\gamma}_x})} d\phi d\theta,
\end{equation}
where $G_{\widetilde{\gamma}_x}=G_{\widetilde{\gamma}_x}(\phi,\theta)$ and
$\sqrt{\det(G_{\widetilde{\gamma}_x})} \, d\phi d\theta$ denote the Gram
matrix and the surface element associated with the parametrization
$\widetilde{\gamma}_x$, respectively. As shown in
Lemma~\ref{lem:cov_nd_radial_func}, this area element can be expressed
in the form
\begin{equation}
  \sqrt{\text{det}(G_{\widetilde{\gamma}_x})} = \sqrt{\text{det}(G_S)}\left[(\rho_x(\xi))^{(n-1)} \sqrt{1 + |\nabla_\xi \ln(\rho_x(\xi))|^2-(\xi \cdot \nabla_\xi \ln(\rho_x(\xi)))^2}\right]_{\xi=\xi(\phi,\theta)}
\end{equation}
where $G_{S} = G_{S}(\phi,\theta)$ denotes the Gram determinant of the
spherical change of variables $\xi = \xi(\phi,\theta)$ and where
$ \sqrt{\text{det}(G_S)} = F(\phi)$ (eq.~\eqref{spher_elems}).  Since
$dS_\xi = \sqrt{\text{det}(G_S)}d\phi d\theta$ is the element of area
of $\partial B_1(0)$, it follows that
\begin{equation}
    I^2(x) = \int_{\partial B_{1}(0)} (\rho_x(\xi))^{1-2s} w(x+\rho_x(\xi) \xi)\frac{x_i-z_i+\rho_x \xi_i}{R_2} \sqrt{1+|\nabla_\xi \ln(\rho_x)|^2 - (\xi\cdot \nabla_\xi \ln(\rho_x))^2} dS_\xi.
\end{equation}
Further, since
$\nabla_\xi \rho_x = -(x-z)\frac{\rho_x(\xi)}{\alpha_x(\xi)}$,
$\xi \cdot \nabla_\xi \rho_x = -\frac{\beta_x(\xi) \rho_x(\xi)}{\alpha_x(\xi)}$
and $\alpha_x^2(\xi) = R_2^2 - |x-z|^2 + \beta_x^2(\xi)$,
\begin{equation}\label{eq:I2_final_AP1}
 I^2(x) = \int_{\partial B_{1}(0)} (\rho_x(\xi))^{1-2s} w(x+\rho_x(\xi) \xi)\frac{x_i-z_i+\rho_x \xi_i}{R_2} \sqrt{1+ \frac{|x-z|^2}{\alpha_x^2(\xi)}  - \frac{\beta_x^2(\xi)}{\alpha_x^2(\xi)}} dS_\xi = -I^1(x),
\end{equation}
which shows that $I^1(x)+I^2(x)=0$, as desired, and the proof is
complete.  \hfill\qedsymbol

\begin{lem}\label{lem:cov_nd_radial_func}
  Using the notations and conventions introduced above in this
  appendix, we have
 \begin{equation}
  \sqrt{\mathrm{det}(G_{\widetilde{\gamma}_x})} = \sqrt{\mathrm{det}(G_S)}\left[(\rho_x(\xi))^{(n-1)} \sqrt{1 + |\nabla_\xi \ln(\rho_x(\xi))|^2-(\xi \cdot \nabla_\xi \ln(\rho_x(\xi)))^2}\right]_{\xi=\xi(\phi,\theta)}.
\end{equation}
\end{lem}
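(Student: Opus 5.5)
The plan is to treat $\widetilde\gamma_x$ as the composition of the spherical map $\xi=\xi(\phi,\theta)$ with the radial graph map $\Phi(\xi)=x+\rho_x(\xi)\xi$, and to compute the area-scaling factor of $\Phi$ on $\partial B_1(0)$ intrinsically. Then $G_{\widetilde\gamma_x}=J_\xi^T\,(D\Phi^T D\Phi)\,J_\xi$, where $J_\xi$ is the $n\times(n-1)$ Jacobian of $\xi(\phi,\theta)$ and $D\Phi$ is restricted to the tangent space $T_\xi=\xi^\perp$ of the unit sphere. Since $\det(J_\xi^TJ_\xi)=\det G_S$, it will suffice to show the following: the Gram determinant of $D\Phi|_{T_\xi}$, computed in an orthonormal basis of $T_\xi$, equals $\rho_x^{2(n-1)}\bigl(1+|\nabla_\xi\ln\rho_x|^2-(\xi\cdot\nabla_\xi\ln\rho_x)^2\bigr)$.

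First, since $\rho_x$ is given explicitly by \eqref{eq:gamma_x_exp} for every $\xi\in\mathbb{R}^n$ near the sphere, I regard it as a function on a neighbourhood of $\partial B_1(0)$. For a tangent vector $v\in\xi^\perp$ I have
\[
D\Phi\,v=\rho_x v+(\nabla_\xi\rho_x\cdot v)\,\xi=\rho_x\bigl(v+(g\cdot v)\xi\bigr),\qquad g=\nabla_\xi\ln\rho_x .
\]
Only the tangential part $g_T=g-(\xi\cdot g)\xi$ enters, since $g\cdot v=g_T\cdot v$ for $v\perp\xi$. Choose an orthonormal basis $e_1,\dots,e_{n-1}$ of $\xi^\perp$. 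Because $\xi\perp e_k$, the Gram matrix of the vectors $D\Phi\,e_k$ is $\rho_x^2\,(I+ab^T)$ with $a_k=b_k=g_T\cdot e_k$.

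Second, the matrix determinant lemma gives $\det(I+aa^T)=1+|a|^2=1+|g_T|^2$. Since $|g_T|^2=|g|^2-(\xi\cdot g)^2$, the intrinsic Gram determinant is $\rho_x^{2(n-1)}\bigl(1+|g|^2-(\xi\cdot g)^2\bigr)$. Taking square roots and multiplying by $\sqrt{\det G_S}=F(\phi)$ then yields the stated formula.

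The main point needing care is the passage from the intrinsic determinant to $G_{\widetilde\gamma_x}$. I would write $J_\xi=EM$, with $E=[e_1\cdots e_{n-1}]$ and $M$ square, so that $\det G_{\widetilde\gamma_x}=\det(M)^2\det(E^TD\Phi^TD\Phi E)$ and $\det G_S=\det(M)^2$. A second subtlety is that $\nabla_\xi$ here denotes the ambient gradient of the extended $\rho_x$. The formula is consistent precisely because only the tangential component survives, so the result does not depend on how $\rho_x$ is extended off the sphere.
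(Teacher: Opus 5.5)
Your argument is correct and is essentially the paper's: both write the Gram matrix as a rank-one perturbation, apply the matrix determinant lemma, and finish with $|g_T|^2=|g|^2-(\xi\cdot g)^2$. The paper's perturbation is $\rho_x^2 G_S+vv^T$ in the spherical coordinate basis, while yours is $\rho_x^2(I+aa^T)$ in an orthonormal frame of $\xi^\perp$. Working in that frame and factoring out $\det(M)^2=\det G_S$ has two small advantages. It spares you the paper's appeal to the diagonal form of $G_S$ and the coordinate formula for $\nabla_{\mathbb{S}^{n-1}}$. It also avoids the paper's tacit need for $G_S$ to be invertible, which fails at the coordinate singularities where some $\sin\phi_k=0$.
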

\begin{proof} Using~\eqref{spher_1}--\eqref{spher_elems} together with
  the relations
  $\left(G_{\widetilde{\gamma}_x}\right)_{ij} = \partial_{i}
  \widetilde{\gamma}_x \cdot \partial_{j} \widetilde{\gamma}_x$,
  $\partial_{i}\rho_x = \nabla_\xi \rho_x(\xi) \cdot \partial_{i}\xi$,
  $\xi \cdot \xi = 1$ and $\xi \cdot \partial_{i} \xi =0$ for
  $1\leq i\leq n-1$ (where $\partial_{i} = \partial_{\phi_{i}}$ for
  $1 \le i \le n-1$ and $\partial_{i} = \partial_{\theta}$ for
  $i=n-1$ and , and where the slight abuse of notation
  $\partial_{i} \widetilde{\gamma}_x = 
      (\partial_{i}\rho_x) \xi+\rho_x \partial_{i}\xi$ has been introduced), we obtain
  \begin{equation}
     \left(G_{\widetilde{\gamma}_x}\right)_{ij}  = \partial_{i}\rho_x\partial_{j}\rho_x +\rho_x^2 \partial_{i}\xi \cdot \partial_{j}\xi.
   \end{equation}
   Since the Gram matrix $G_S\in \mathbb{R}^{(n-1)\times (n-1)}$ for the
   spherical change of coordinates is the diagonal matrix
   given by~\cite[p. 44]{waner2005}
   \[
     \left(G_S\right)_{ij} =  \partial_{i}\xi \cdot \partial_{j}\xi = \begin{cases}
       0 & \mbox{if } i\ne j,\\
       \prod_{k=1}^{j-1} \sin^2(\phi_k)& \mbox{if } i = j,
     \end{cases}
   \]
   letting
   $v = (\partial_{1}\rho_x,\dots,
   \partial_{n-1}\rho_x)^T$ we may then write
   $G_{\widetilde{\gamma}_x} = vv^T +\rho_x^2 G_{S}$. Using the matrix
   determinant relation~\cite[Lemma 1.1]{ding2007}
 \[ {\displaystyle \det(A + uv^T)=(1+v^T A^{-1} u )\,\det( A ),\,}
 \]
 which is valid for any invertible matrix $A$ and column vectors $u$
 and $v$, we obtain
 \begin{equation}\label{eq:det_G_gamma}
   \text{det}(G_{\widetilde{\gamma}_x}) = \rho_x^{2(n-1)} \cdot \text{det}(G_S) \cdot \left( 1 + \frac{1}{\rho_x^2} v^T G_S^{-1} v\right).
 \end{equation}
 Further, using
  \begin{equation}
\nabla_{\mathbb{S}^{n-1}} f =
\frac{\partial f}{\partial \phi_1} \, \widehat{\phi}_1 +
\frac{1}{\sin \phi_1} \frac{\partial f}{\partial \phi_2} \,  \widehat{\phi}_2 +
\frac{1}{\sin \phi_1 \sin \phi_2} \frac{\partial f}{\partial \phi_3} \,  \widehat{\phi}_3 +
\cdots +
\frac{1}{\sin \phi_1 \cdots \sin \phi_{n-2}} \frac{\partial f}{\partial \theta} \,  \widehat{\theta},
  \end{equation}
  where the normalized vectors
  $ \widehat{\phi}_j = \partial_{j}\xi/|\partial_{j}\xi|$
  ($1\leq j\leq n-2$) and
  $ \widehat{\theta} = \partial_{n-1}\xi/|\partial_{n-1}\xi|$ are
  orthonormal, the second scalar term within the parenthesis in
  equation~\eqref{eq:det_G_gamma} becomes
  \begin{equation}
       \frac{1}{\rho_x^2} v^T G_S^{-1} v = \frac{1}{\rho_x^2} \sum_{i=1}^{n-1}\left( \prod_{k=1}^{i-1} \frac{1}{\sin^2(\phi_k)}\right) \left( \partial_i \rho_x\right)^2  = \left| \frac{\nabla_{\mathbb{S}^{n-1}} \rho_x}{\rho_x}\right|^2 = \left| \nabla_{\mathbb{S}^{n-1}} \ln{\rho_x}\right|^2. 
  \end{equation}
  Since 
  \begin{equation}
      \left|\nabla_{\mathbb{S}^{n-1}} \ln \rho_x\right|^2=|\nabla \ln \rho_x-(\xi \cdot \nabla \ln \rho_x ) \xi|^2=|\nabla \ln \rho_x|^2-(\xi \cdot \nabla \ln \rho_x )^2,
  \end{equation}
  we obtain
  \begin{equation}
      \sqrt{\text{det}(G_{\widetilde{\gamma}_x})} =\sqrt{\text{det}(G_S)} \left[\rho_x^{(n-1)} \sqrt{1 + |\nabla \ln \rho_x|^2-(\xi \cdot \nabla \ln \rho_x)^2}\right],
    \end{equation}
    as desired, and the proof is complete.
 \end{proof}

\bibliographystyle{unsrt}

\end{document}